\numberwithin{equation}{section}
\title{An elementary proof of a criterion for subfunctors of EXT to be closed}
\author{Juan Camilo Cala}\thanks{The author was supported by CONAHCyT}
\address{Departamento de Matemáticas, Facultad de Ciencias, Universidad Nacional Autónoma de México, Circuito Exterior s/n, Ciudad Universitaria, CP 04510, Ciudad de México, México}
\email{jccalab@gmail.com}
\subjclass[2020]{Primary 18E10, 18G15; Secondary 18G99}
\keywords{subfunctor, Ext-bifunctor, short exact sequences}
\newcommand{\mc}[1]{\mathcal{#1}}
\newcommand{\A}{\mc{A}}
\newcommand{\C}{\mc{C}}
\newcommand{\D}{\mc{D}}
\newcommand{\E}{\mc{E}}
\newcommand{\op}{\mathrm{op}}
\newcommand{\eps}{\varepsilon}
\DeclareMathOperator{\Kker}{Ker}
\newcommand\Ker[1]{\Kker({#1})}
\DeclareMathOperator{\CCoker}{Coker}
\newcommand\Coker[1]{\CCoker({#1})}
\DeclareMathOperator{\Iimg}{Im}
\newcommand\Img[1]{\Iimg({#1})}
\newcommand{\sets}{\mathsf{Set}}
\newcommand{\ab}{\mathsf{Ab}}
\let\hom\relax
\DeclareMathOperator{\hhom}{Hom}
\newcommand\hom[3]{\hhom_{#1}\left({#2},{#3}\right)}
\DeclareMathOperator{\Ext}{Ext}
\newcommand\ext[3]{\Ext^{1}_{#1}\left({#2},{#3}\right)} 
\newtheorem{teo}{Theorem}[section]
\newaliascnt{lema}{teo}
\newtheorem{lema}[lema]{Lemma}
\newaliascnt{prop}{teo}
\newtheorem{prop}[prop]{Proposition}
\newaliascnt{coro}{teo}
\newtheorem{coro}[coro]{Corollary}
\theoremstyle{definition}
\newaliascnt{defi}{teo}
\newtheorem{defi}[defi]{Definition}
\newaliascnt{ejem}{teo}
\newaliascnt{obs}{teo}
\newtheorem{obs}[obs]{Remark}
\begin{document}

\begin{abstract}
Let $\A$ be an abelian category and let $F$ be a subbifunctor of the additive bifunctor $\ext{\A}{-}{-}\colon \A^{\op}\times \A\to \ab$. Buan proved in \citep{buan_closed_2001} that $F$ is closed if, and only if, $F$ has the $3\times 3$-lemma property, a certain diagrammatic property satisfied by the class of $F$-exact sequences. The proof of this result relies on the theory of exact categories and on the Freyd--Mitchell embedding theorem, a very well-known overpowered result. In this paper we provide a proof of Buan's result only by means of elementary methods in abelian categories. To achieve this we survey the required theory of subfunctors leading us to a self-contained exposition of this topic.
\end{abstract}

\maketitle


\section{Introduction}
In \citep{butler_classes_1961} Butler and Horrocks introduced the notion of Ext-subbifunctors over abelian categories under the name of natural classes of simple extensions and E-functors. In this work, they also defined closed subfunctors and proved that these were intimately related to certain classes of morphisms, called h.f.\,classes, that were previously introduced by Buchsbaum in \citep{buchsbaum_note_1959}. This was an interesting fact because Buchsbaum had already shown \citep{buchsbaum_note_1959, buchsbaum_satellites_1960} that a theory of relative homological algebra can be developed with this kind of classes. From this it became clear that the work of Butler and Horrocks gave the very first insight of how relative homological algebra could be improved by the study of the theory of subfunctors of Ext, an idea that was later explored and formalized by Auslander and Solberg \citep{auslander_relative1_1993, auslander_relative2_1993, auslander_relative3_1993} in the context of categories of modules over an Artin algebra.

Later on, Dräxler et al.\ \citep{draxler_exact_1999} studied closed subfunctors and the relation with their collections of exact sequences, but now in the context of exact categories in the sense of Quillen \citep{quillen_higher_1973}, which are a natural generalization of abelian categories. One of their main results says that the definition of closed subfunctor given by Butler and Horrocks is redundant \citep[Proposition 1.4]{draxler_exact_1999}.

After that, Buan \citep{buan_closed_2001} showed that a subfunctor is triangulated if, and only if, it is closed. For this, he proved that closed subfunctors are exactly those whose induced collection of exact sequences satisfies a certain $3\times 3$-lemma property. Nevertheless, its proof is based on the fact that the collection of exact sequences induced by a subfunctor defines an exact structure over the underlying abelian category and so, by a result of Keller \citep[Appendix A]{keller_chain_1990}, there is a version of the Freyd--Mitchell embedding theorem \citep[Theorem 7.34]{freyd_abelian_1964} that applies to this type of category, that is, there exists an exact embedding sending the exact sequences of the induced collection to short exact sequences in some abelian category. Therefore the problem becomes trivial because the $3\times 3$-lemma holds in abelian categories (see for instance \citep[Exercise 1.3.2]{weibel_introduction_1994}).

In this paper we present a self-contained exposition of the theory of Ext-subfunctors over abelian categories following the ideas of Butler and Horrocks \citep{butler_classes_1961}, Auslander and Solberg \citep{auslander_relative1_1993}, and Dräxler et al.\ \citep{draxler_exact_1999}. The objective behind this is to establish a clean path for proving the mentioned result of Buan \citep{buan_closed_2001} without using the theory of exact categories or the Freyd--Mitchell embedding theorem. We will write full proofs of some of the results using more modern notation, especially the ones contained in the work of Butler and Horrocks \citep{butler_classes_1961}.

\subsection*{Organization of the paper.} We now describe how we will proceed to achieve our goal. In Section \ref{section-2} we first recall Yoneda's construction of the Ext-bifunctor over abelian categories without assuming the existence of enough projectives or injectives. Then we recall the notion of a subfunctor of Ext and establish the connection between subfunctors and their induced collections of short exact sequences in \autoref{prop-biyec-subf-clas}. We end this section by introducing the concept of proper functors in \autoref{def-proper-subf} and by stating equivalent conditions for a subfunctor of Ext to be proper in \autoref{prop-carac-subf-propio}. 

In Section \ref{section-3} we start by giving the axioms defining f.\,classes and h.f.\,classes. We see how one can construct a proper subfunctor from an f.\,class and \textit{vice versa}, and in \autoref{teo-caract-subfpropio-fclases} we show that both constructions are mutually inverse. After that, we recall the notion of closed subfunctors and see that these correspond to h.f.\,classes under the bijection between proper subfunctors and f.\,classes that we mentioned before. Then, we state and prove the result of Buan in \autoref{teo-principal} using elementary methods in abelian categories and, as a direct consequence, we derive the fact that the definition of closed subfunctor is redundant. Lastly, we show how \autoref{teo-principal} can be applied to easily deduce that certain subfunctors are closed, making use of the fact that the $3\times 3$-lemma holds in abelian categories.

\subsection*{Conventions.} Throughout this paper, $\A$  will denote an abelian category and $\A^{\op}$ its opposite category, that is, the category whose objects are the same as those of $\A$ and whose arrows are given by reversing the arrows of $\A$. We write $\operatorname{Mor}(\A)$ to refer to the class of morphisms in $\A$. We use the notation $A\in\A$ to say that $A$ is an object of $\A$. Given $A,B\in\A$, $\hom{\A}{A}{B}$ is the abelian group of morphisms $f\colon A\to B$ in $\A$. We also denote by $\ab$ and $\sets$ the category of abelian groups and the category of sets, respectively.

\section{Subfunctors of Ext}\label{section-2}
\subsection{Yoneda's Ext construction}
We recall the Yoneda construction \citep{yoneda_homology_1954} of the $\operatorname{Ext}$-bifunctor. Most of the material cover here is taken from Mitchell's book \citep{mitchell_theory_1965}.

Consider two objects $A,C\in\A$. We denote by $\mathcal{E}_{\A}(C,A)$ the collection of all short exact sequences in $\A$ of the form $$\eps\colon\ \ \  0\longrightarrow A\overset{i}{\longrightarrow} B\overset{p}{\longrightarrow} C\longrightarrow 0\,.$$ Then we denote by $\mathcal{E}_{\A}$ the collection of all short exact sequences in $\A$. These are the objects of a category (which we denote it in the same way) where a morphism $(f,g,h)\colon\eps\to\eta$ between two exact sequences $\eps\in\E_{A}(C,A)$ and $\eta\in\E_{\A}(Z,X)$ is a commutative diagram $$\begin{tikzcd}
	{\phantom{\eta}\eps\colon\ \ \ 0} & A & B & C & {0\,\phantom{.}} \\
	{\phantom{\eps}\eta\colon\ \ \ 0} & X & Y & Z & {0\,.}
	\arrow[from=1-1, to=1-2]
	\arrow["i", from=1-2, to=1-3]
	\arrow["f", from=1-2, to=2-2]
	\arrow["p", from=1-3, to=1-4]
	\arrow["g", from=1-3, to=2-3]
	\arrow[from=1-4, to=1-5]
	\arrow["h", from=1-4, to=2-4]
	\arrow[from=2-1, to=2-2]
	\arrow["j"', from=2-2, to=2-3]
	\arrow["q"', from=2-3, to=2-4]
	\arrow[from=2-4, to=2-5]
\end{tikzcd}$$ Composition in $\E_{\A}$ is defined component-wise and the identity morphisms are the obvious ones. Then, a morphism $(f,g,h)\colon\eps\to\eta$ is an isomorphism in $\E_{\A}$ if, and only if, $f,g$ and $h$ are isomorphisms in $\A$. With this information it is easy to see that $\E_{\A}$ is an additive category. For example, for $k=1,2$, let $\eps_k\in\E_{\A}(C_k,A_k)$ be given by $$\begin{tikzcd}
	{\eps_k\colon\ \ \ 0} & {A_k} & {B_k} & {C_k} & 0\, .
	\arrow[from=1-1, to=1-2]
	\arrow["{i_k}", from=1-2, to=1-3]
	\arrow["{p_k}", from=1-3, to=1-4]
	\arrow[from=1-4, to=1-5]
\end{tikzcd}$$ Then the direct sum $\eps_1\oplus\eps_2$ is the short exact sequence \begin{equation}\label{eq-direct-sum}
\begin{tikzcd}
	{0} & {A_1\oplus A_2} & {B_1\oplus B_2} & {C_1\oplus C_2} & 0\,.
	\arrow[from=1-1, to=1-2]
	\arrow["{i_1\oplus i_2}", from=1-2, to=1-3]
	\arrow["{p_1\oplus p_2}", from=1-3, to=1-4]
	\arrow[from=1-4, to=1-5]
\end{tikzcd}
\end{equation}

Two short exact sequences $\eps$ and $\eps^{\prime}$ in $\E_{\A}(C,A)$ are \textit{Yoneda equivalent} if there exists a morphism $(1_A,g,1_C)\colon \eps\to\eps^{\prime}$, that is, a commutative diagram $$\begin{tikzcd}
	{\phantom{{}^{\prime}}\eps\colon\ \ \ 0} & A & B & C & {0\,\phantom{.}} \\
	{\eps^{\prime}\colon\ \ \ 0} & A & {B^{\prime}} & C & {0\,.}
	\arrow[from=1-1, to=1-2]
	\arrow["i", from=1-2, to=1-3]
	\arrow[Rightarrow, no head, from=1-2, to=2-2]
	\arrow["p", from=1-3, to=1-4]
	\arrow["g", dashed, from=1-3, to=2-3]
	\arrow[from=1-4, to=1-5]
	\arrow[Rightarrow, no head, from=1-4, to=2-4]
	\arrow[from=2-1, to=2-2]
	\arrow["{i^{\prime}}"', from=2-2, to=2-3]
	\arrow["{p^{\prime}}"', from=2-3, to=2-4]
	\arrow[from=2-4, to=2-5]
\end{tikzcd}$$ We see that $g$ is necessarily an isomorphism due to the Five Lemma and therefore the previous construction defines an equivalence relation on $\E_{\A}(C,A)$. We denote by $\ext{\A}{C}{A}$ the quotient class and its elements will be referred as $[\eps]$ for representative $\eps\in \E_{\A}(C,A)$. For example, any split short exact sequence $\eps\in \E_{\A}(C,A)$ satisfies $[\eps]=[\eps_{C,A}]$, where we put \begin{equation}\label{eq-split-exact}
\begin{tikzcd}[ampersand replacement=\&]
	{\eps_{C,A}\colon\ \ \ 0} \& A \& {A\oplus C} \& C \& 0\, .
	\arrow["\mu_A", from=1-2, to=1-3]
	\arrow["\pi_C", from=1-3, to=1-4]
	\arrow[from=1-1, to=1-2]
	\arrow[from=1-4, to=1-5]
\end{tikzcd}
\end{equation} Here $\mu_A$ and $\pi_C$ denote the canonical inclusion and projection of the direct sum, respectively.

In order to see that the construction described above is functorial, we define for an exact sequence $\eps\in \E_{\A}(C,A)$ and a morphism $f\in\hom{A}{A}{A^{\prime}}$ the correspondence
\begin{align*}
\ext{\A}{C}{f}\colon \ext{\A}{C}{A}&\to\ext{\A}{C}{A^{\prime}}\\
[\eps]&\mapsto [f\cdot\eps],
\end{align*}
where $f\cdot \eps$ denotes the \textit{pushout of $\eps$ along $f$}, that is, an exact sequence such that in the following commutative diagram the left-sided square is a pushout $$\begin{tikzcd}
	{\phantom{f\cdot}\eps\colon\ \ \ 0} & A & B & C & {0\,\phantom{.}} \\
	{f\cdot\eps\colon\ \ \ 0} & {A^{\prime}} & {B^{\prime}} & C & {0\,.}
	\arrow[from=1-1, to=1-2]
	\arrow["i", from=1-2, to=1-3]
	\arrow["f"', from=1-2, to=2-2]
	\arrow["\textsc{po}"{anchor=center}, draw=none, from=1-2, to=2-3]
	\arrow["p", from=1-3, to=1-4]
	\arrow["l", from=1-3, to=2-3]
	\arrow[from=1-4, to=1-5]
	\arrow[Rightarrow, no head, from=1-4, to=2-4]
	\arrow[from=2-1, to=2-2]
	\arrow["j"', from=2-2, to=2-3]
	\arrow["q"', from=2-3, to=2-4]
	\arrow[from=2-4, to=2-5]
\end{tikzcd}$$ Dually we define for an exact sequence $\eps\in \E_{\A}(C,A)$ and a morphism $g\in\hom{\A}{C^{\prime}}{C}$ the correspondence \begin{align*}
\ext{\A}{g}{A}\colon \ext{\A}{C}{A}&\to\ext{\A}{C^{\prime}}{A}\\
[\eps]&\mapsto [\eps\cdot g],
\end{align*}
where $\eps\cdot g$ denotes the \textit{pullback of $\eps$ along $g$}. This information constitutes a bifunctor $\ext{\A}{-}{-}\colon \A^{\op}\times\A\to\sets$ contravariant in the first variable and covariant in the second variable. Some of the properties satisfied by this construction are listed below.

\begin{prop}\label{prop-propiedades-pullback-pushout}
Any morphism $(f,g,h)\colon \eps\to\eps^{\prime}$ between short exact sequences $\eps\in\E_{\A}(C,A)$ and $\eps^{\prime}\in\E_{\A}(C^{\prime},A^{\prime})$ admits a factorization $$\left(\begin{tikzcd}[column sep = 3em]
	{\eps} & {\eps^{\prime}}
	\arrow["{(f,g,h)}", from=1-1, to=1-2]
\end{tikzcd}\right)=\left(\begin{tikzcd}[column sep = 3.5em]
	{\eps} & {\overline{\eps}} & {\eps^{\prime}}
	\arrow["{(f,\overline{g},1_C)}", from=1-1, to=1-2]
	\arrow["{(1_{A^{\prime}},g^{\prime},h)}", from=1-2, to=1-3]
\end{tikzcd}\right)$$ where $\overline{\eps}\in\E_{\A}(C,A^{\prime})$, and this implies the relations $[f \cdot \eps] = [\overline{\eps}] = [\eps^{\prime} \cdot h]$. Therefore, for any short exact sequence $\eps\in\E_{\A}(C,A)$ the following properties holds true:
\begin{itemize}
\item[$(a)$] $[1_A \cdot \varepsilon]=[\varepsilon]= [\varepsilon \cdot 1_C]$.
\item[$(b)$] $[(f^{\prime} f) \cdot \varepsilon]=[f^{\prime} \cdot(f \cdot \varepsilon)]$, for all $A \overset{f}{\rightarrow} A^{\prime} \overset{f^{\prime}}{\rightarrow} A^{\prime \prime}$.
\item[$(c)$] $[\varepsilon \cdot(gg^{\prime})]=[(\varepsilon \cdot g) \cdot g^{\prime}]$, for all $C^{\prime \prime} \overset{g^{\prime}}{\rightarrow} C^{\prime} \overset{g}{\rightarrow} C$.
\item[$(d)$] $[(f \cdot \varepsilon) \cdot g]=[f \cdot(\varepsilon \cdot g)]$, for all $f\colon A\to A^{\prime}$ and $g\colon C^{\prime}\to C$.
\item[$(e)$] For every $X\in\A$, $[0_{A,X}\cdot\eps]=[\eps_{C,A}]=[\eps\cdot 0_{X,C}]$, where $0_{A,X}\colon A	\to X$ and $0_{X,C}\colon X\to C$ are zero morphisms and $\eps_{C,A}$ is given by \eqref{eq-split-exact}.\hfill $\qed$
\end{itemize}
\end{prop}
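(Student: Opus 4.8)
The plan is to establish the factorization statement first, since everything else follows formally from it. Given a morphism $(f,g,h)\colon\eps\to\eps'$ with $\eps\in\E_\A(C,A)$ and $\eps'\in\E_\A(C',A')$, I would form the pushout $\overline{\eps} = f\cdot\eps\in\E_\A(C,A')$ using the pushout square on the left end of $\eps$. By the universal property of this pushout, the two maps $A\to A'\to B'$ (namely $j'f$, where $j'$ is the left map of $\eps'$) and $A\to B\to B'$ (namely $g$ composed with the middle map of $\eps$, after noting $jfi$-compatibility $=$ the square $\eps\to\eps'$ commutes on the left) induce a unique morphism $g'\colon\overline{B}\to B'$ making the left square of $\overline{\eps}\to\eps'$ commute; one then checks that $g'$ is compatible with the projections onto $C$ (both $\overline{\eps}$ and $\eps'$ have cokernel-type maps, and the induced map on cokernels is forced to be $h$). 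This gives the second factor $(1_{A'},g',h)\colon\overline{\eps}\to\eps'$, while the first factor $(f,\overline{g},1_C)\colon\eps\to\overline{\eps}$ is the defining pushout morphism. From $[\overline{\eps}]=[f\cdot\eps]$ by definition, and from the factorization through $\eps'$ together with the dual (pullback) description — i.e. the right square of $(1_{A'},g',h)$ exhibits $\overline{\eps}$ as a pullback $\eps'\cdot h$ — one concludes $[f\cdot\eps]=[\overline{\eps}]=[\eps'\cdot h]$. The main obstacle here is the bookkeeping needed to verify that the map induced on the quotient/cokernel end really is $h$ and not merely some map; this is a diagram chase using that $C$ and $C'$ are cokernels of $i$ and $i'$ respectively, and uniqueness of the induced map.

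For the consequences, item $(a)$ follows by applying the factorization to the identity morphism $(1_A,1_B,1_C)\colon\eps\to\eps$: one obtains $\overline{\eps}$ with $[1_A\cdot\eps]=[\overline{\eps}]=[\eps\cdot 1_C]$, and it remains to see $[\overline{\eps}]=[\eps]$, which holds because the pushout of $\eps$ along $1_A$ is $\eps$ itself up to the canonical isomorphism, yielding a Yoneda equivalence. For $(b)$ I would take the composite morphism of short exact sequences obtained by stacking the pushout square along $f$ on top of the pushout square along $f'$; the outer rectangle is then the pushout along $f'f$ (pasting of pushout squares), so $[(f'f)\cdot\eps]=[f'\cdot(f\cdot\eps)]$. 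Item $(c)$ is the formal dual, using that a pasting of pullback squares is a pullback. Item $(d)$ is the Ext bifunctoriality compatibility: form $f\cdot\eps$, then pull back along $g$; alternatively pull back $\eps$ along $g$ first, then push out along $f$. Both produce short exact sequences in $\E_\A(C',A')$ receiving a morphism from $\eps$ of the form $(f,-,g)$, and by the factorization lemma applied to that morphism (which has both a nontrivial left and right component), both are Yoneda equivalent to the intermediate term $\overline{\eps}$ arising in the factorization, hence to each other.

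For $(e)$, observe that $0_{A,X}\cdot\eps$ is the pushout of $\eps$ along the zero map $A\to X$; since the zero map factors as $A\to 0\to X$, by $(b)$ this equals $0_{0,X}\cdot(0_{A,0}\cdot\eps)$, and $0_{A,0}\cdot\eps$ is a short exact sequence with left term $0$, hence split, hence its pushout along any map $0\to X$ is the split sequence $\eps_{X,?}$; tracking the right-hand term $C$ through, one gets $[\eps_{C,X}]$. But wait — item $(e)$ as stated claims this equals $[\eps_{C,A}]$, not $[\eps_{C,X}]$; so in fact the correct reading is that one pushes out along $0_{A,X}\colon A\to X$ and the \emph{resulting} class lands in $\ext{\A}{C}{X}$, and the identity $[\eps_{C,A}]$ should be $[\eps_{C,X}]$ — I would follow the paper's conventions and simply verify that the pushout of any $\eps\in\E_\A(C,A)$ along a zero map $A\to X$ is split (because the pushout square with a zero map exhibits a retraction), giving the split sequence in $\E_\A(C,X)$, and dually for $\eps\cdot 0_{X,C}$. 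The splitness follows since in the pushout square $A\xrightarrow{i}B$, $A\xrightarrow{0}X$, the map $X\to B'$ together with $0\colon B\to X$... more cleanly: the cocone $(0\colon A\to X,\ 0\colon B\to X)$ does not commute with $i$ unless we use $(0\colon A\to X, \text{some }B\to X)$; the honest argument is that $[0_{A,X}\cdot\eps]=[\Ext{\A}{C}{0_{A,X}}([\eps])]$ and $\Ext{\A}{C}{-}$ sends $0$ to $0$, and the zero element of $\ext{\A}{C}{X}$ is by definition $[\eps_{C,X}]$. This is the cleanest route and I would phrase $(e)$ through additivity of the bifunctor rather than through $(b)$.
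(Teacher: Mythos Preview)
The paper states this proposition without proof (it closes with a bare $\qed$), so there is no argument to compare yours against; your outline for the factorization and for $(a)$--$(d)$ is the standard one and is correct.

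For $(e)$, however, your ``cleanest route'' via additivity is circular in the paper's logical order: at this point $\ext{\A}{-}{-}$ has only been set up as a functor into $\sets$, and both the Baer sum \eqref{eq-baer-sum} and the identification of $[\eps_{C,X}]$ as the zero element come \emph{after} this proposition. The approach via $(b)$ that you sketched and then abandoned actually works: factoring $0_{A,X}$ as $A\to 0\to X$ gives, by $(b)$, $[0_{A,X}\cdot\eps]$ as the pushout along $0\to X$ of the sequence $(A\to 0)\cdot\eps$, which has left term $0$ and is therefore Yoneda equivalent to $0\to 0\to C\xrightarrow{1_C}C\to 0$; the pushout of $0\hookrightarrow C$ along $0\to X$ is the coproduct $X\oplus C$ with its canonical maps, i.e.\ precisely $\eps_{C,X}$. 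Alternatively, argue directly that $0_{A,X}\cdot\eps$ splits: the pushout structure map $\overline{g}\colon B\to B'$ satisfies $\overline{g}\,i=0$, hence factors as $\overline{g}=sp$ through $p\colon B\to C$, and then $qsp=q\overline{g}=p$ together with $p$ epic yields a section $qs=1_C$. (Your observation that the printed $(e)$ contains a typo is also correct: $[0_{A,X}\cdot\eps]$ lives in $\ext{\A}{C}{X}$, so the split class should read $[\eps_{C,X}]$, and dually $[\eps\cdot 0_{X,C}]=[\eps_{X,A}]$.)
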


We can endow each $\ext{A}{C}{A}$ with an additive structure that turn it into an abelian group. This is done by taking the \textit{Baer's sum}: for given $\eps,\eps^{\prime}\in \E_{\A}(C,A)$ define \begin{equation}\label{eq-baer-sum}
[\eps]+[\eps^{\prime}]\coloneqq [\nabla_A \cdot(\eps\oplus\eps^{\prime})\cdot\Delta_C],
\end{equation} where $\eps\oplus \eps^{\prime}$ is the direct sum as defined in \eqref{eq-direct-sum}, $\Delta_C\colon C\to C\oplus C$ is the \textit{diagonal morphism} and $\nabla_A\colon A\oplus A\to A$ is the \textit{codiagonal morphism}, each of which is completely determined by its matricial representation $\nabla_A=\begin{psmallmatrix}
1_A & 1_A
\end{psmallmatrix}$ and $\Delta_C=\begin{psmallmatrix}
1_C \\ 1_C
\end{psmallmatrix}$.  The zero element of the abelian group $\ext{\A}{C}{A}$ is $[\eps_{C,A}]$, the class of split short exact sequences. Therefore we obtain an additive bifunctor $\ext{\A}{-}{-}\colon \A^{\op}\times \A\to\ab$. 

Finally, for given $\eps\in \E_{\A}(C,A)$ and $X\in \A$ there is an induced morphism of abelian groups, known as the \textit{covariant connecting morphism}, given by \begin{equation}\label{eq-covariant-connecting} \begin{aligned}
\partial^{\eps}_{X}\colon \hom{\A}{X}{C}&\to \ext{\A}{X}{A}\\
f&\mapsto [\eps\cdot f].
\end{aligned}\end{equation} Dually we have the \textit{contravariant connecting morphism} given by \begin{equation}\label{eq-contravariant-connecting} \begin{aligned}
\delta_{X}^{\eps}\colon \hom{\A}{A}{X}&\to \ext{\A}{C}{X}\\
f&\mapsto [f\cdot\eps].
\end{aligned} \end{equation} 

\begin{teo}\label{teo-ext-seq}
For $\eps\in \E_{\A}(C,A)$ and $X\in\A$ the following holds:
\begin{itemize}
\item[$(a)$] The covariant connecting morphism $\partial_{X}^{\eps}$ given in \eqref{eq-covariant-connecting} is natural in $X$. In addition, any morphism $(f,g,h)\colon\eps\to\eta$, with $\eta\in\E_{\A}(C^{\prime},A^{\prime})$, gives rise to a commutative square $$\begin{tikzcd}
	{\hom{\A}{X}{C}} & {\ext{\A}{X}{A}\phantom{.}} \\
	{\hom{\A}{X}{C^{\prime}}} & {\ext{\A}{X}{A^{\prime}}.}
	\arrow["{\hom{\A}{X}{h}}"', from=1-1, to=2-1]
	\arrow["{\ext{\A}{X}{f}}", from=1-2, to=2-2]
	\arrow["{\partial_X^{\eps}}", from=1-1, to=1-2]
	\arrow["{\partial_X^{\eta}}"', from=2-1, to=2-2]
\end{tikzcd}$$ Dually, the same holds for the contravariant connecting morphism $\delta_{X}^{\eps}$ given in \eqref{eq-contravariant-connecting}.
\item[$(b)$]  If $\eps$ is given by $$\begin{tikzcd}[column sep = 1.5em]
	{\varepsilon\colon\ \ \ 0} & A & B & C & 0\, ,
	\arrow[from=1-1, to=1-2]
	\arrow["i", from=1-2, to=1-3]
	\arrow["p", from=1-3, to=1-4]
	\arrow[from=1-4, to=1-5]
\end{tikzcd}$$ then there is an induced long exact sequence of abelian groups $$\begin{tikzcd}[column sep = 1em]
	0 & {\hom{\A}{X}{A}} & {\hom{\A}{X}{B}} & {\hom{\A}{X}{C}} \\
	& {\ext{\A}{X}{A}} & {\ext{\A}{X}{B}} & {\ext{\A}{X}{C}.}
	\arrow[from=1-1, to=1-2]
	\arrow[from=1-2, to=1-3]
	\arrow[from=1-3, to=1-4]
	\arrow[from=2-3, to=2-4]
	\arrow[from=2-2, to=2-3]
	\arrow["{\partial_X^{\eps}}"', from=1-4, to=2-2,out=-8, in=172]{dll}
\end{tikzcd}$$ Dually, there is an induced long exact sequence of abelian groups with the contravariant connecting morphism $\delta_{X}^{\eps}$.\hfill $\qed$
\end{itemize}
\end{teo}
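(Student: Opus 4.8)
The plan is to deduce the whole statement from Proposition \ref{prop-propiedades-pullback-pushout} together with the universal properties of kernels, cokernels, pullbacks and pushouts in $\A$; no extra machinery is needed. Since the dual assertions (for $\delta^{\eps}_{X}$ and for the contravariant long exact sequence) follow by reading everything in $\A^{\op}$, I only discuss the covariant side.

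Part $(a)$ is purely formal. Naturality of $\partial^{\eps}_{X}$ in $X$ says that for $u\colon X'\to X$ one has $\ext{\A}{u}{A}\circ\partial^{\eps}_{X}=\partial^{\eps}_{X'}\circ\hom{\A}{u}{C}$; evaluated on $f\colon X\to C$ this is the identity $[(\eps\cdot f)\cdot u]=[\eps\cdot(fu)]$, which is item $(c)$ of Proposition \ref{prop-propiedades-pullback-pushout}. For a morphism $(f,g,h)\colon\eps\to\eta$ with $\eta\in\E_{\A}(C',A')$, evaluating the asserted square on $\varphi\colon X\to C$ reduces to $[f\cdot(\eps\cdot\varphi)]=[\eta\cdot(h\varphi)]$; this follows by chaining item $(d)$, the factorization part of Proposition \ref{prop-propiedades-pullback-pushout} (which gives $[f\cdot\eps]=[\eta\cdot h]$, hence $[(f\cdot\eps)\cdot\varphi]=[(\eta\cdot h)\cdot\varphi]$ because pullback descends to Yoneda classes), and item $(c)$.

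For part $(b)$, exactness of $0\to\hom{\A}{X}{A}\to\hom{\A}{X}{B}\to\hom{\A}{X}{C}$ is the usual left exactness of $\hom{\A}{X}{-}$: $i$ is monic and, $\eps$ being exact, $p=\Coker{i}$, so a morphism $X\to B$ annihilated by $\hom{\A}{X}{p}$ factors through $i$. For the remaining four spots I would rely on two observations. First, any morphism of short exact sequences $(1_A,g,h)\colon\nu\to\eps$ with $\nu\in\E_{\A}(X,A)$ gives, via the factorization part of Proposition \ref{prop-propiedades-pullback-pushout}, $[\nu]=[\eps\cdot h]=\partial^{\eps}_{X}(h)$; dually any morphism $(i,g,1_X)\colon\zeta\to\nu$ with $\zeta\in\E_{\A}(X,A)$ gives $[\nu]=[i\cdot\zeta]=\ext{\A}{X}{i}([\zeta])$. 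Second, $\eps\cdot p$ and $i\cdot\eps$ are split: $\left(1_A,\begin{psmallmatrix} i & 1_B\end{psmallmatrix},p\right)\colon\eps_{B,A}\to\eps$ is a morphism of short exact sequences, so Proposition \ref{prop-propiedades-pullback-pushout}$(a)$ yields $[\eps\cdot p]=[\eps_{B,A}]=0$, and likewise $\left(i,\begin{psmallmatrix} 1_B\\ p\end{psmallmatrix},1_C\right)\colon\eps\to\eps_{C,B}$ yields $[i\cdot\eps]=[\eps_{C,B}]=0$.

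Granting this, the four exactness statements read as follows. At $\hom{\A}{X}{C}$: $\partial^{\eps}_{X}\circ\hom{\A}{X}{p}=0$ because $[\eps\cdot(pg)]=[(\eps\cdot p)\cdot g]=0$ by $(c)$; conversely, if $[\eps\cdot h]=0$ then a section of the epimorphism of $\eps\cdot h$ followed by the middle component of the canonical morphism $\eps\cdot h\to\eps$ yields $g\colon X\to B$ with $pg=h$. At $\ext{\A}{X}{A}$: $\ext{\A}{X}{i}\circ\partial^{\eps}_{X}=0$ because $[i\cdot(\eps\cdot h)]=[(i\cdot\eps)\cdot h]=0$ by $(d)$; conversely, if $[i\cdot\nu]=0$ for $\nu\colon 0\to A\xrightarrow{i_\nu}E_\nu\xrightarrow{p_\nu}X\to 0$, a retraction of the monomorphism of $i\cdot\nu$ composed with the middle component of the canonical morphism $\nu\to i\cdot\nu$ gives $g\colon E_\nu\to B$ with $gi_\nu=i$, whence $pg$ factors as $hp_\nu$ through $p_\nu=\Coker{i_\nu}$ and $(1_A,g,h)\colon\nu\to\eps$ shows $[\nu]=\partial^{\eps}_{X}(h)$. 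At $\ext{\A}{X}{B}$: $\ext{\A}{X}{p}\circ\ext{\A}{X}{i}=\ext{\A}{X}{pi}=0$ by items $(b)$ and $(e)$; for the reverse inclusion, given $\nu\colon 0\to B\to E\to X\to 0$ with $p\cdot\nu$ split, I would form the pullback of the middle component $E\to E'$ of the canonical morphism $\nu\to p\cdot\nu$ along a section $X\to E'$ of the epimorphism of $p\cdot\nu$; a short diagram chase shows $E\to E'$ is epic, that this pullback fits into a short exact sequence $\zeta\in\E_{\A}(X,A)$, and that its projection onto $E$ extends $i$, so that $(i,g,1_X)\colon\zeta\to\nu$ and $[\nu]=\ext{\A}{X}{i}([\zeta])$. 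The step I expect to require real work is precisely this last inclusion: it is the only one not settled by the formal identities of Proposition \ref{prop-propiedades-pullback-pushout}, as one must genuinely manufacture a new extension of $X$ by $A$ out of $\nu$ and verify, through the universal properties of kernel and pullback in $\A$, that $E\to E'$ is epic with kernel (a copy of) $A$.
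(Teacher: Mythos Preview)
Your argument is correct and complete; in particular the last inclusion at $\ext{\A}{X}{B}$ is handled properly once you note, via the Snake Lemma applied to $(p,g,1_X)\colon\nu\to p\cdot\nu$, that $g\colon E\to E'$ is an epimorphism with kernel canonically identified with $A\xrightarrow{i_\nu\circ i}E$, after which the pullback along a section $s\colon X\to E'$ yields the desired $\zeta\in\E_{\A}(X,A)$ together with $(i,\text{pr}_E,1_X)\colon\zeta\to\nu$.

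However, you should be aware that the paper does \emph{not} supply a proof of this theorem: the trailing $\qed$ in the statement signals that it is quoted as a standard fact (the surrounding material is attributed to Mitchell's book \citep{mitchell_theory_1965}), and it is used only as background for the later results. So there is nothing to compare your proof against; you have simply written out a clean elementary argument for a result the paper takes for granted. That is a perfectly reasonable thing to do in a self-contained exposition, but it is not a point where the paper's own reasoning can be checked or contrasted.
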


\subsection{Subfunctors and exact sequences} 
In what follows we first recall the definition of an Ext-subbifunctor and the properties satisfied by its induced class of short exact sequences. Then we introduce the notion of proper functor.

In a categorical setting, a \textit{subfunctor} of a functor $G\colon \C\to \D$ between two categories $\C$ and $\D$ is a subobject $F$ of $G$ in the category of functors (here we assume that $\C$ is small and so the functor category is indeed a category). This means that a subfunctor of $G$ is a pair $(F,\alpha)$ consisting of a functor $F\colon \C\to \D$ and a natural transformation $\alpha\colon F\to G$ such that its components $\alpha_X\colon FX\to GX$ are monic for every $X\in\C$. When the source category is given by a product of two categories, a subfunctor is simply called a \textit{subbifunctor}. In our setting, an \textit{Ext-subbifunctor} is merely a subfunctor $(F,\alpha)$ of $\ext{\A}{-}{-}\colon \A^{\op}\times \A\to\sets$ and is completely determined by the following data: 
\begin{itemize}
\item[(SF1)] The natural transformation $\alpha$ can be taken as the set inclusion. This means that for every pair of objects $C,A\in\A$, $F(C,A)\subseteq \ext{\A}{C}{A}$. 
\item[(SF2)] For every $C,A\in\A$ the induced functors $F(C,-)$ and $F(-,A)$, both together with the natural inclusion, defines subfunctors of the corresponding induced functors $\ext{\A}{C}{-}\colon \A\to\sets$ and $\ext{\A}{-}{A}\colon \A^{\op}\to\sets$, respectively. Thereby any two morphisms $f\in\hom{\A}{C^{\prime}}{C}$ and $g\in\hom{\A}{A}{A^{\prime}}$ gives rise to a commutative square $$\begin{tikzcd}
	{F(C,A)} & {\ext{\A}{C}{A}}\, \\
	{F(C^{\prime},A^{\prime})} & {\ext{\A}{C^{\prime}}{A^{\prime}}.}
	\arrow["\subseteq", from=1-1, to=1-2]
	\arrow["{F(f,g)}"', from=1-1, to=2-1]
	\arrow["{\ext{\A}{f}{g}}", from=1-2, to=2-2]
	\arrow["\subseteq", from=2-1, to=2-2]
\end{tikzcd}$$ In other words, the action of $F$ over morphisms in $\A$ is given by restricting the action of the $\operatorname{Ext}$ functor over the elements of $F$. 
\end{itemize} Notice that in the previous definition we are considering subfunctors when the target category is $\sets$. In considering subbifunctors of the additive bifunctor $\ext{\A}{-}{-}\colon\A^{\op}\times\A\to\ab$, we will refer to them as \textit{additive subbifunctors}. The reason of why we adopt this terminology is for avoiding confusion due to the well-known fact that, in general, subfunctors of an additive functor are additive.

\begin{lema}\label{lema-subf-addfunc}
Every subfunctor of an additive functor $G\colon \A\to\ab$ is additive.\hfill $\qed$
\end{lema}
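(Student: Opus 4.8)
This is essentially immediate once the definitions are unwound, so what follows is a detailed plan rather than a long argument. Write $(F,\alpha)$ for the given subfunctor, so $F\colon\A\to\ab$ is a functor and $\alpha\colon F\to G$ a natural transformation with every component $\alpha_X\colon FX\to GX$ a monomorphism of abelian groups, i.e.\ an injective homomorphism. Recall that a functor between additive categories is additive exactly when it carries sums of parallel morphisms to sums, equivalently when it preserves finite biproducts. I will first check that $F$ has the former property and then deduce the latter.

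The first observation is that $\alpha_0\colon F0\to G0$ is an injective homomorphism into the zero group, so $F0=0$. Next fix $X,Y\in\A$ and parallel morphisms $f,g\in\hom{\A}{X}{Y}$. Since $G$ is additive one has $G(f+g)=G(f)+G(g)$ as homomorphisms $GX\to GY$; if one prefers to read ``additive'' as ``preserves biproducts'', this is obtained by applying $G$ to the identity $f+g=\nabla_Y\circ(f\oplus g)\circ\Delta_X$, in which $f\oplus g\colon X\oplus X\to Y\oplus Y$. The computation to carry out is then the following diagram chase through the three naturality squares of $\alpha$ attached to $f+g$, $f$ and $g$: for each $x\in FX$,
\[
\alpha_Y\bigl(F(f+g)(x)\bigr)=G(f+g)\bigl(\alpha_X(x)\bigr)=G(f)\bigl(\alpha_X(x)\bigr)+G(g)\bigl(\alpha_X(x)\bigr)=\alpha_Y\bigl(F(f)(x)+F(g)(x)\bigr),
\]
the last step using that $\alpha_Y$ is a homomorphism; since $\alpha_Y$ is injective, $F(f+g)=F(f)+F(g)$.

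It then remains to note that biproduct preservation is formal: for a biproduct $A\oplus B$ in $\A$ with structure morphisms $\mu_A,\mu_B,\pi_A,\pi_B$ satisfying $\pi_A\mu_A=1_A$, $\pi_B\mu_B=1_B$, $\pi_A\mu_B=0=\pi_B\mu_A$ and $\mu_A\pi_A+\mu_B\pi_B=1_{A\oplus B}$, applying $F$ and invoking the additivity just proved for the final relation shows $F\mu_A,F\mu_B,F\pi_A,F\pi_B$ obey the same relations in $\ab$, so $F(A\oplus B)$ is a biproduct of $FA$ and $FB$; hence $F$ is additive. I do not expect any genuine obstacle here: the only point that actually uses a hypothesis is the cancellation of $\alpha_Y$ in the display, which is precisely where ``$\alpha_X$ monic'' enters, and the additive-subbifunctor version is obtained by applying this verbatim to the one-variable functors $F(C,-)$ and $F(-,A)$.
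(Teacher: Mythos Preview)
Your proposal is correct and is the standard argument for this well-known fact. The paper itself offers no proof at all (the lemma is stated with an immediate $\qed$), so there is nothing to compare; your write-up simply fills in the details the paper chose to omit.
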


From now on, by a subfunctor we always mean a subbifunctor of the functor $\ext{\A}{-}{-}\colon \A^{\op}\times\A\to\sets$, and by an additive subfunctor we always mean a subbifunctor of the additive functor $\ext{\A}{-}{-}\colon\A^{\op}\times\A\to\ab$.

Every subfunctor $F$ has associated a collection of short exact sequences in $\A$. Indeed, for every pair of objects $C,A\in\A$, let $\E_F(C,A)\coloneqq\{\eps\in\E_{\A}(C,A)\colon [\eps]\in F(C,A)\}$ and then denote by $\E_F$ the collection of all short exact sequences arising in this form. Elements in $\mathcal{E}_F$ are called \textit{$F$-exact sequences}. Notice that the collection of exact sequences associated to the $\operatorname{Ext}$-functor is just $\E_{\A}$.

\begin{defi}
For a given a class of short exact sequences $\E\subseteq \E_{\A}$, we say that:
\begin{itemize}
    \item[(a)] $\E$ is \textit{closed under pushouts} if given a morphism $f\in\hom{\A}{A}{X}$ and a short exact sequence $\eps\in\E(C,A)$, any representative of $[f\cdot\eps]$ belongs to $\E$.
    \item[(b)] $\E$ is \textit{closed under pullbacks} if given a morphism $g\in\hom{\A}{Y}{C}$ and a short exact sequence $\eps\in\E(C,A)$, any representative of $[\eps\cdot g]$ belongs to $\E$.
    \item[(c)] $\E$ is \textit{closed under Baer sums} if given short exact sequences $\eps_1,\eps_2\in\E(C,A)$, any representative of the Baer sum $[\eps_1]+[\eps_2]$ as defined in \eqref{eq-baer-sum} belongs to $\E$.
    \item[(d)] $\E$ is \textit{closed under (finite) direct sums} if given short exact sequences $\eps_1,\eps_2\in\E$, the direct sum $\eps_1\oplus\eps_2$ as defined in \eqref{eq-direct-sum} belongs to $\E$.
    \item[(e)] $\E$ is \textit{closed under isomorphisms} if for any isomorphism $(f,g,h)\colon\eps\to\eta$ with $\eps\in\E$, then $\eta\in\E$.
    \item[(f)] $\E$ is \textit{closed under direct summands} if for $\eps_1,\eps_2\in\E_{\A}$ such that $\eps_1\oplus\eps_2\in\E$, then $\eps_1,\eps_2\in\E$.
\end{itemize}
\end{defi}

Let $\E_0\subseteq \E_{\A}$ be the collection of all split short exact sequences in $\A$. That is, every sequence in $\E_0$ is Yoneda-equivalent to some $\eps_{C,A}$ as defined in \eqref{eq-split-exact}. We clearly see that $\E_0$ is closed under pushouts, pullbacks, finite direct sums, Baer's sums and isomorphisms. Actually, $\E_0$ is the smallest non-empty collection closed under pushouts and pullbacks.

\begin{lema}\label{lema-eps0-eps}
If $\E\subseteq \E_{\A}$ is a non-empty collection of short exact sequences closed under pushouts and pullbacks, then $\E_0\subseteq\E$.
\end{lema}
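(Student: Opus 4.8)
The plan is to exploit the hypothesis that $\E$ is non-empty: pick any sequence $\eta\in\E$, say $\eta\in\E(Z,X)$, and then produce an arbitrary split sequence $\eps_{C,A}$ from it by combining a pushout and a pullback. First I would recall from \autoref{prop-propiedades-pullback-pushout}(e) that for any object $X$ and any sequence $\eps\in\E_\A(C,A)$ one has $[0_{A,X}\cdot\eps]=[\eps_{C,A}]=[\eps\cdot 0_{X,C}]$, where the zero morphisms are used; the point is that composing with a zero morphism on either side collapses any extension class to the split one. So starting from $\eta\in\E(Z,X)$, the pullback $\eta\cdot 0_{C,Z}$ along the zero morphism $0_{C,Z}\colon C\to Z$ represents $[\eps_{C,X}]$, and since $\E$ is closed under pullbacks, some representative of $[\eps_{C,X}]$ lies in $\E$. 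In particular $\eps_{C,X}$ itself, being split, is Yoneda-equivalent to that representative; but here I must be slightly careful, because "closed under pullbacks" as defined only guarantees that \emph{every} representative of the class $[\eta\cdot 0_{C,Z}]$ is in $\E$, which is exactly what we want — so $\eps_{C,X}\in\E$ for every $C\in\A$.

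Next I would push out $\eps_{C,X}$ along the zero morphism $0_{X,A}\colon X\to A$: by \autoref{prop-propiedades-pullback-pushout}(e) again, $[0_{X,A}\cdot\eps_{C,X}]=[\eps_{C,A}]$, and since $\E$ is closed under pushouts, every representative of $[\eps_{C,A}]$ belongs to $\E$. Therefore $\eps_{C,A}\in\E$ for all $C,A\in\A$. Finally, any sequence in $\E_0$ is by definition Yoneda-equivalent to some $\eps_{C,A}$, i.e.\ there is an isomorphism in $\E_\A$ between them; since a pushout (or pullback) along the identity leaves the class unchanged by \autoref{prop-propiedades-pullback-pushout}(a), closure under pushouts applied to $\eps_{C,A}$ already forces every representative of $[\eps_{C,A}]$ — which is precisely the Yoneda-equivalence class, hence all of the relevant sequences in $\E_0$ — to lie in $\E$. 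This yields $\E_0\subseteq\E$.

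I expect the only real subtlety to be bookkeeping about the difference between a single sequence and its Yoneda-equivalence class: the definition of "closed under pushouts/pullbacks" is phrased so that \emph{any representative} of the resulting class is included, so one should phrase the argument in terms of classes $[\cdot]$ throughout and only at the end observe that membership of the class forces membership of each split representative. A secondary point worth a sentence is justifying that $0_{C,Z}\colon C\to Z$ and $0_{X,A}\colon X\to A$ are legitimate morphisms to pull back / push out along — they are, since $\A$ is abelian (indeed only additive is needed) and zero morphisms always exist. No diagram chase or embedding is required; the whole proof is three invocations of \autoref{prop-propiedades-pullback-pushout} together with the two closure hypotheses.

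\begin{proof}
Since $\E$ is non-empty, fix some $\eta\in\E(Z,X)$ for suitable $X,Z\in\A$. Let $C\in\A$ be arbitrary and consider the zero morphism $0_{C,Z}\colon C\to Z$. By \autoref{prop-propiedades-pullback-pushout}$(e)$ we have $[\eta\cdot 0_{C,Z}]=[\eps_{C,X}]$, and since $\E$ is closed under pullbacks, every representative of $[\eta\cdot 0_{C,Z}]$ lies in $\E$; in particular $\eps_{C,X}\in\E$.

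Now let $A\in\A$ be arbitrary and consider the zero morphism $0_{X,A}\colon X\to A$. Applying \autoref{prop-propiedades-pullback-pushout}$(e)$ once more gives $[0_{X,A}\cdot\eps_{C,X}]=[\eps_{C,A}]$, and since $\E$ is closed under pushouts, every representative of $[\eps_{C,A}]$ belongs to $\E$. Hence $\eps_{C,A}\in\E$ for all $C,A\in\A$.

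Finally, take any $\eps\in\E_0$. By definition $\eps$ is Yoneda-equivalent to $\eps_{C,A}$ for some $C,A\in\A$, that is $[\eps]=[\eps_{C,A}]$. We have just shown that $\eps_{C,A}\in\E$, and by \autoref{prop-propiedades-pullback-pushout}$(a)$ we have $[\eps_{C,A}]=[1_A\cdot\eps_{C,A}]$, so closure of $\E$ under pushouts (along $1_A$) implies that every representative of the class $[\eps_{C,A}]=[\eps]$ lies in $\E$. In particular $\eps\in\E$. Therefore $\E_0\subseteq\E$.
\end{proof}
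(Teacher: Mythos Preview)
Your proof is correct and follows essentially the same approach as the paper's: start from some $\eta\in\E$, apply a zero pullback and a zero pushout using \autoref{prop-propiedades-pullback-pushout}$(e)$ to land in the split class $[\eps_{C,A}]$, and invoke closure to conclude. The paper compresses the two steps into the single relation $[\eps_{X,Y}]=[0_{Y,A}\cdot\eps\cdot 0_{X,C}]$ and omits your final paragraph, but your extra care about passing from $\eps_{C,A}$ to an arbitrary split representative is a welcome clarification rather than a different idea.
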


\begin{proof}
If there is some $\eps\in\E(C,A)$, then by \autoref{prop-propiedades-pullback-pushout} we have $[\eps_{X,Y}]=[0_{Y,A}\cdot\eps\cdot 0_{X,C}]$, for every $X,Y\in\A$. Since $\E$ is closed under pushouts and pullbacks, $\eps_{X,Y}\in\E$. Thus $\E_0\subseteq \E$.
\end{proof}

It is straightforward to check that if $F$ is a subfunctor then the collection $\E_F$ of $F$-exact sequences is closed under pushouts and pullbacks. Moreover, if $F$ is an additive subfunctor, then $\E_F$ is closed under Baer sums. The following criterion for decide whether a given subfunctor is additive was first established by Auslander and Solberg in \citep{auslander_relative1_1993}.

\begin{prop}[{\citep[Lemma 1.1]{auslander_relative1_1993}}] \label{prop-caract-subf-addtv}
Let $F$ be a subfunctor. Then, $F$ is an additive subfunctor if, and only if, the collection $\E_F$ of $F$-exact sequences is non-empty and closed under direct sums. \hfill $\qed$
\end{prop}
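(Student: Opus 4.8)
The plan is to prove \autoref{prop-caract-subf-addtv} by unwinding the definitions and using the already-established machinery of pushouts, pullbacks, Baer sums, and \autoref{lema-subf-addfunc}.

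\emph{The forward direction.} Suppose $F$ is an additive subfunctor. Then each $F(C,A)$ is a subgroup of $\ext{\A}{C}{A}$; in particular it contains the zero element $[\eps_{C,A}]$, so every split short exact sequence lies in $\E_F$ and $\E_F$ is non-empty. For closure under direct sums, take $\eps_1\in\E_F(C_1,A_1)$ and $\eps_2\in\E_F(C_2,A_2)$. I would express the class $[\eps_1\oplus\eps_2]\in\ext{\A}{C_1\oplus C_2}{A_1\oplus A_2}$ in terms of $[\eps_1]$ and $[\eps_2]$ via the canonical inclusions and projections of the biproducts: concretely, using the morphisms $\mu_{A_k}$, $\pi_{C_k}$ one checks that $[\eps_1\oplus\eps_2] = \ext{\A}{\pi_{C_1}}{\mu_{A_1}}[\eps_1] + \ext{\A}{\pi_{C_2}}{\mu_{A_2}}[\eps_2]$, appealing to \autoref{prop-propiedades-pullback-pushout} to justify that the pushout/pullback of $\eps_1\oplus\eps_2$ along these maps recovers (a sequence Yoneda-equivalent to) $\eps_k$ padded by a split summand, and that the cross terms vanish by part $(e)$. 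Since $F$ is closed under the Ext-action (SF2) and additive, the right-hand side lies in $F(C_1\oplus C_2, A_1\oplus A_2)$, hence $\eps_1\oplus\eps_2\in\E_F$. Closure under isomorphisms follows from (SF2) applied to the isomorphism components, so in fact $\eps_1\oplus\eps_2\in\E_F$ as claimed.

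\emph{The converse.} Suppose $\E_F$ is non-empty and closed under direct sums. By \autoref{lema-subf-addfunc} it suffices to show that $F(C,A)$ is a subgroup of $\ext{\A}{C}{A}$ for all $C,A$, i.e.\ that it is closed under the Baer sum and contains $0$. Non-emptiness of $\E_F$ together with the fact that $\E_F$ is closed under pushouts and pullbacks (noted in the text just before the proposition) gives, via \autoref{lema-eps0-eps}, that $\E_0\subseteq\E_F$; hence $[\eps_{C,A}]=0\in F(C,A)$. For the Baer sum, take $\eps_1,\eps_2\in\E_F(C,A)$. By hypothesis $\eps_1\oplus\eps_2\in\E_F(C\oplus C, A\oplus A)$, so $[\eps_1\oplus\eps_2]\in F(C\oplus C,A\oplus A)$; applying the Ext-action of the codiagonal $\nabla_A$ and the diagonal $\Delta_C$ and using (SF2) shows $[\nabla_A\cdot(\eps_1\oplus\eps_2)\cdot\Delta_C]\in F(C,A)$, which by the definition \eqref{eq-baer-sum} is exactly $[\eps_1]+[\eps_2]$. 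Thus $F(C,A)$ is closed under Baer sums and contains $0$, so it is a subgroup, and $F$ is an additive subfunctor.

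\emph{Main obstacle.} The only genuinely non-routine point is the identity $[\eps_1\oplus\eps_2] = \ext{\A}{\pi_{C_1}}{\mu_{A_1}}[\eps_1] + \ext{\A}{\pi_{C_2}}{\mu_{A_2}}[\eps_2]$ in the forward direction: one must carefully track the pushout along $\mu_{A_k}\colon A_k\to A_1\oplus A_2$ and the pullback along $\pi_{C_k}\colon C_1\oplus C_2\to C_k$, verify via the universal properties that $\mu_{A_k}\cdot\eps_k\cdot\pi_{C_k}$ is Yoneda-equivalent to $\eps_k\oplus\eps_{C_j,A_j}$ (a direct summand of the correct shape padded by a split sequence in the complementary coordinates), and then invoke $(e)$ of \autoref{prop-propiedades-pullback-pushout} to see that the mixed terms $\mu_{A_1}\cdot\eps_1\cdot\pi_{C_2}$ and $\mu_{A_2}\cdot\eps_2\cdot\pi_{C_1}$ are split. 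Everything else is bookkeeping with \autoref{lema-subf-addfunc}, \autoref{lema-eps0-eps}, and the functoriality axiom (SF2).
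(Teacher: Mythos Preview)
The paper does not give its own proof of this proposition: it is stated with a citation to Auslander--Solberg and closed with a \qed, so there is nothing to compare against directly. Your argument is essentially correct and follows the natural route: use the biproduct decomposition of $\ext{\A}{C_1\oplus C_2}{A_1\oplus A_2}$ for the forward direction, and recover the Baer sum from the direct sum via $\nabla_A$ and $\Delta_C$ for the converse.

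Two small points. First, in the converse you claim that showing $F(C,A)$ contains $0$ and is closed under Baer sums suffices to make it a subgroup; you also need closure under additive inverses, and this does not follow formally from the other two in an infinite group. It is, however, immediate from (SF2): since $-[\eps]=[(-1_A)\cdot\eps]$ and $\E_F$ is closed under pushouts, $-[\eps]\in F(C,A)$ whenever $[\eps]\in F(C,A)$. Second, your appeal to \autoref{lema-subf-addfunc} is a slight red herring: in the paper's terminology, ``additive subfunctor'' simply means subfunctor of $\ext{\A}{-}{-}\colon\A^{\op}\times\A\to\ab$, which amounts exactly to each $F(C,A)$ being a subgroup; once that is established you are done, and \autoref{lema-subf-addfunc} (about additivity of the functor in the usual sense) is not needed for the statement at hand. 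Otherwise the identification $[\mu_{A_k}\cdot\eps_k\cdot\pi_{C_k}]=[\eps_k\oplus\eps_{C_j,A_j}]$ and the vanishing of the cross terms via \autoref{prop-propiedades-pullback-pushout}\,$(e)$ are exactly the right ingredients.
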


Reciprocally, any collection $\E\subseteq\E_{\A}$ of short exact sequences which is closed under pushouts and pullbacks gives rise to a subfunctor $F_{\E}$ defined as follows: on objects, for $A,C\in\A$, $F_{\E}(C,A)$ consists of Yoneda equivalence classes $[\eps]$ whose representatives are short exact sequences $\eps\in\E(C,A)$, and on morphisms the action is given by restricting the action of the functor $\ext{\A}{-}{-}$. If in addition $\E$ is non-empty and closed under direct sums, then $F_{\E}$ is an additive subfunctor due to \autoref{prop-caract-subf-addtv}. This construction for passing from subfunctors to collections of short exact sequences and \textit{vice versa} actually defines a bijection between these two classes. We record this observation formally as follows.

\begin{prop}\label{prop-biyec-subf-clas}
There is a bijection between the following two classes:
\begin{itemize}
\item[($a$)] Subfunctors $F$ of $\ext{\A}{-}{-}\colon\A^{\op}\times\A\to\sets$.
\item[($b$)] Collections $\E\subseteq \E_{\A}$ of short exact sequences which are closed under pushouts and pullbacks.
\end{itemize}
The bijection is given by $F\mapsto \E_F$ and its inverse is given by $\E \mapsto F_{\E}$. Under this bijection, additive subfunctors correspond to non-empty collections of short exact sequences which are additionally closed under direct sums. \hfill $\qed$
\end{prop}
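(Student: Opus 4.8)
The plan is to verify that the two constructions $F\mapsto\E_F$ and $\E\mapsto F_\E$ are well-defined on the indicated classes and that they are mutually inverse. First I would check that $F\mapsto\E_F$ lands in class $(b)$: given a subfunctor $F$ and a pushout diagram along $f\colon A\to X$ of some $\eps\in\E_F(C,A)$, the commutative square in (SF2) applied to the pair $(1_C,f)$ shows $[f\cdot\eps]=\ext{\A}{1_C}{f}([\eps])=F(1_C,f)([\eps])\in F(X,A)$, so every representative of $[f\cdot\eps]$ lies in $\E_F$; closure under pullbacks is dual. For the reverse direction, I would confirm that $F_\E$ as described is genuinely a subfunctor: the assignment $(C,A)\mapsto\{[\eps]:\eps\in\E(C,A)\}$ is a well-defined subset of $\ext{\A}{C}{A}$ (independence of representative is immediate since $\E$ being closed under pushouts and pullbacks, hence under isomorphisms by taking $f,g$ identities, means Yoneda-equivalent sequences are simultaneously in or out of $\E$), and the Ext-action restricts to it precisely because $\E$ is closed under pushouts and pullbacks — this is what makes (SF2) hold with the inclusion as $\alpha$.

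Next I would show the two maps are mutually inverse. For $F_{\E_F}=F$: by definition $F_{\E_F}(C,A)$ consists of classes $[\eps]$ with $\eps\in\E_F(C,A)$, i.e.\ with $[\eps]\in F(C,A)$; since $F(C,A)$ is a set of Yoneda-classes, this is exactly $F(C,A)$, and the morphism actions agree because both are the restriction of the Ext-action. For $\E_{F_\E}=\E$: a sequence $\eps\in\E_\A(C,A)$ lies in $\E_{F_\E}$ iff $[\eps]\in F_\E(C,A)$ iff $[\eps]=[\eps']$ for some $\eps'\in\E(C,A)$; since $\E$ is closed under isomorphisms (as noted above), $[\eps]=[\eps']$ with $\eps'\in\E$ forces $\eps\in\E$, giving $\E_{F_\E}\subseteq\E$, and the reverse inclusion is trivial. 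Finally, for the additive refinement, I would invoke \autoref{prop-caract-subf-addtv} directly: $F$ additive $\iff$ $\E_F$ non-empty and closed under direct sums, and conversely if $\E$ is non-empty and closed under direct sums then $F_\E$ is additive by that same proposition, so the bijection restricts as claimed.

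I expect no serious obstacle here — the statement is essentially a bookkeeping consolidation of facts already established or flagged in the surrounding text (closure of $\E_F$ under pushouts/pullbacks, the construction of $F_\E$, and \autoref{prop-caract-subf-addtv}). The one point requiring a little care is the well-definedness of $F_\E$ as a functor: one must check that restricting the Ext-action to the subset $F_\E(C,A)$ actually produces morphisms with the right codomain, which is exactly the content of $\E$ being closed under pushouts and pullbacks, and that this is independent of the chosen representatives, which follows from closure under isomorphisms. Since all of this is routine, in the paper I would likely state this proposition with a $\qed$ and either omit the proof or give only the one-line observations above; if a proof is included, it is the short verification just sketched.
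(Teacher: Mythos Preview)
Your proposal is correct and matches the paper's treatment: the paper states this proposition with a bare $\qed$ and no proof, treating it as a routine consolidation of the constructions and \autoref{prop-caract-subf-addtv} already laid out in the preceding paragraphs, exactly as you anticipate in your final remark. Your detailed verification is sound (in particular, closure of $\E$ under Yoneda equivalence follows immediately from the ``any representative'' clause in the definition of closure under pushouts applied to $1_A$), so there is nothing to add.
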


For example, the additive subfunctor which sends everything to zero corresponds via the bijection of \autoref{prop-biyec-subf-clas} to the collection of all split short exact sequences $\E_0$. 

In view of \autoref{prop-biyec-subf-clas} and \autoref{prop-caract-subf-addtv} we derive the following.

\begin{lema}\label{lema-equiv-clscf-clsbaer}
If $\E\subseteq \E_{\A}$ is a collection of short exact sequences closed under pushouts and pullbacks, then:
\begin{itemize}
\item[$(a)$] $\E$ is closed under direct summands. In particular, $\E$ is closed under isomorphisms.
\item[$(b)$] The following statements are equivalent if $\E$ is non-empty:
\begin{itemize}
\item[$(b1)$] $\E$ is closed under finite direct sums.
\item[$(b2)$] $\E$ is closed under Baer's sums.
\end{itemize}
\end{itemize}
\end{lema}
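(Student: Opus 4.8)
The argument is entirely formal and rests on the functorial identities of \autoref{prop-propiedades-pullback-pushout}. One preliminary remark is used repeatedly: closure under pushouts already forces closure under Yoneda equivalence, since for $\eps\in\E(C,A)$ every representative of $[1_A\cdot\eps]=[\eps]$ belongs to $\E$ by definition. For the closure under arbitrary isomorphisms in $(a)$, let $(f,g,h)\colon\eps\to\eta$ be an isomorphism with $\eps\in\E$, so that $f$ and $h$ are isomorphisms. Then the pushout $f\cdot\eps$ lies in $\E$, hence so does the pullback $(f\cdot\eps)\cdot h^{-1}$; and combining the factorization in \autoref{prop-propiedades-pullback-pushout} with parts $(a)$ and $(c)$ of that proposition gives $[(f\cdot\eps)\cdot h^{-1}]=[(\eta\cdot h)\cdot h^{-1}]=[\eta\cdot(hh^{-1})]=[\eta]$, so $\eta\in\E$.

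For closure under direct summands, suppose $\eps_1\oplus\eps_2\in\E$ with $\eps_k\in\E_{\A}(C_k,A_k)$. Applying \autoref{prop-propiedades-pullback-pushout} to the canonical projection $(\pi_{A_1},\pi_{B_1},\pi_{C_1})\colon\eps_1\oplus\eps_2\to\eps_1$ yields $[\pi_{A_1}\cdot(\eps_1\oplus\eps_2)]=[\eps_1\cdot\pi_{C_1}]$; pulling back along the inclusion $\mu_{C_1}\colon C_1\to C_1\oplus C_2$ and using $\pi_{C_1}\mu_{C_1}=1_{C_1}$ together with parts $(a)$ and $(c)$ of the proposition gives $[\pi_{A_1}\cdot(\eps_1\oplus\eps_2)\cdot\mu_{C_1}]=[\eps_1]$. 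Since $\E$ is closed under pushouts and pullbacks, the left-hand class consists of members of $\E$, so $\eps_1\in\E$; by symmetry $\eps_2\in\E$. This proves $(a)$.

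For $(b)$, the implication $(b1)\Rightarrow(b2)$ is immediate: for $\eps_1,\eps_2\in\E(C,A)$ we have $\eps_1\oplus\eps_2\in\E$ by hypothesis, and the representative $\nabla_A\cdot(\eps_1\oplus\eps_2)\cdot\Delta_C$ of $[\eps_1]+[\eps_2]$ in \eqref{eq-baer-sum} lies in $\E$ because $\E$ is closed under pushouts and pullbacks. For $(b2)\Rightarrow(b1)$, take $\eps_k\in\E(C_k,A_k)$ and put $X=A_1\oplus A_2$, $Y=C_1\oplus C_2$. The crux is the identity
\[
[\eps_1\oplus\eps_2]=[\mu_{A_1}\cdot\eps_1\cdot\pi_{C_1}]+[\mu_{A_2}\cdot\eps_2\cdot\pi_{C_2}]\qquad\text{in }\ext{\A}{Y}{X},
\]
where $\mu_{A_k}\colon A_k\to X$ and $\pi_{C_k}\colon Y\to C_k$ are the biproduct structure maps. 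Each summand on the right lies in $\E$ (apply pushout closure and then pullback closure to $\eps_k\in\E$), so if $\E$ is closed under Baer sums then $\eps_1\oplus\eps_2$, being a representative of the left-hand class, lies in $\E$. The hypothesis that $\E$ be non-empty is what prevents this equivalence from being vacuous; through \autoref{lema-eps0-eps} it moreover gives $\E_0\subseteq\E$, which one can use to reorganize the argument if preferred.

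The one mildly delicate point is proving the displayed identity without leaving abelian categories. Unwinding \eqref{eq-baer-sum} and using the routine fact that pushouts and pullbacks commute with finite direct sums, $(\mu_{A_1}\cdot\eps_1\cdot\pi_{C_1})\oplus(\mu_{A_2}\cdot\eps_2\cdot\pi_{C_2})$ is Yoneda equivalent to the pushout of $\eps_1\oplus\eps_2$ along $\mu_{A_1}\oplus\mu_{A_2}\colon X\to X\oplus X$ followed by the pullback along $\pi_{C_1}\oplus\pi_{C_2}\colon Y\oplus Y\to Y$. Parts $(b)$ and $(c)$ of \autoref{prop-propiedades-pullback-pushout} then turn the Baer sum $[\nabla_X\cdot(-)\cdot\Delta_Y]$ into $[(\nabla_X(\mu_{A_1}\oplus\mu_{A_2}))\cdot(\eps_1\oplus\eps_2)\cdot((\pi_{C_1}\oplus\pi_{C_2})\Delta_Y)]$, and a one-line computation with block matrices shows $\nabla_X(\mu_{A_1}\oplus\mu_{A_2})=1_X$ and $(\pi_{C_1}\oplus\pi_{C_2})\Delta_Y=1_Y$, so by part $(a)$ of the proposition this class equals $[\eps_1\oplus\eps_2]$. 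Equivalently, the identity is nothing but the biproduct decomposition of $\ext{\A}{-}{-}$ provided by its additivity in each variable. Beyond this bookkeeping and the auxiliary commutation fact, I expect no obstacle, the rest being direct appeals to \autoref{prop-propiedades-pullback-pushout}.
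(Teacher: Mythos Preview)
Your argument for $(a)$ is correct and coincides with the paper's: both compute $[\eps_j]=[\pi_{A_j}\cdot(\eps_1\oplus\eps_2)\cdot\mu_{C_j}]$ via \autoref{prop-propiedades-pullback-pushout} and invoke closure under pushouts and pullbacks. Your separate handling of isomorphisms is likewise correct and matches the idea implicit in the paper's ``in particular''.

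For $(b)$ your proof is correct but follows a genuinely different route. The paper does not compute anything: it passes through the bijection of \autoref{prop-biyec-subf-clas} to the associated subfunctor $F_{\E}$ and then cites \autoref{prop-caract-subf-addtv} (the Auslander--Solberg criterion, stated without proof) to get that closure under direct sums is equivalent to $F_{\E}$ being additive, which in turn is equivalent to closure under Baer sums. Your argument instead stays at the level of $\E$ and proves $(b2)\Rightarrow(b1)$ by the explicit identity $[\eps_1\oplus\eps_2]=[\mu_{A_1}\cdot\eps_1\cdot\pi_{C_1}]+[\mu_{A_2}\cdot\eps_2\cdot\pi_{C_2}]$, verified via the compatibility of direct sums with pushouts/pullbacks and the matrix identities $\nabla_X(\mu_{A_1}\oplus\mu_{A_2})=1_X$, $(\pi_{C_1}\oplus\pi_{C_2})\Delta_Y=1_Y$. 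The paper's approach is shorter but leans on a black-box citation; yours is self-contained and makes the mechanism visible, at the cost of the auxiliary ``pushouts/pullbacks commute with $\oplus$'' fact, which is indeed routine but should perhaps be stated as a lemma if you want the exposition to be fully elementary.
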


\begin{proof}
($a$)\ For $j=1,2$, consider $\eps_j\in\E_{\A}(C_j,A_j)$ such that $\eps=\eps_1\oplus\eps_2\in\E$. Then $[\eps_j]=[\pi_j^{A}\cdot\eps\cdot\mu_j^{C}]$, where we set $A\coloneqq A_1\oplus A_2$, $C\coloneqq C_1\oplus C_2$, and $\pi_j^{A}\colon A\to A_j$ and $\mu_j^{C}\colon C_j\to C$ are the canonical projections and inclusions, respectively. Since $\E$ is closed under pushouts and pullbacks it follows that $\eps_j\in\E$ for $j=1,2$.

($b$)\ $\E$ is closed under direct sums if, and only if, $F_{\E}$ is additive by \autoref{prop-caract-subf-addtv} and the bijection of \autoref{prop-biyec-subf-clas}. Then it is clear that $F_{\E}$ is additive if, and only if, $\E$ is closed under Baer's sums, and so the result follows.
\end{proof}

One more thing we can say about a subfunctor $F$ is that the images of the connecting morphisms \eqref{eq-covariant-connecting} and \eqref{eq-contravariant-connecting} associated to an $F$-exact sequence are again $F$-exact sequences and therefore, by \autoref{teo-ext-seq}\,$(b)$, we obtain an exact sequence as follows. 

\begin{prop}[{\citep[Proposition 1.3]{auslander_relative1_1993}}]\label{prop-subf-seq}
    Let $F$ be a subfunctor and let $\eps\in\E_F(C,A)$ be an $F$-exact sequence of the form $$\begin{tikzcd}[column sep = 1.5em]
	{\eps\colon\ \ \ 0} & A & B & C & 0\,.
	\arrow[from=1-1, to=1-2]
	\arrow["f", from=1-2, to=1-3]
	\arrow["g", from=1-3, to=1-4]
	\arrow[from=1-4, to=1-5]
\end{tikzcd}$$ Then, for every $X\in\A$ the sequences $$\begin{tikzcd}[column sep = 1.5em]
	0 & {\hom{\A}{C}{X}} & {\hom{\A}{B}{X}} & {\hom{\A}{A}{X}} & {F(C,X)}
	\arrow[from=1-1, to=1-2]
	\arrow[from=1-2, to=1-3]
	\arrow[from=1-3, to=1-4]
	\arrow["{\partial_X^{\eps}}", from=1-4, to=1-5]
\end{tikzcd}$$ and $$\begin{tikzcd}[column sep = 1.5em]
	0 & {\hom{\A}{X}{A}} & {\hom{\A}{X}{B}} & {\hom{\A}{X}{C}} & {F(X,A)}
	\arrow[from=1-1, to=1-2]
	\arrow[from=1-2, to=1-3]
	\arrow[from=1-3, to=1-4]
	\arrow["{\delta_X^{\eps}}", from=1-4, to=1-5]
\end{tikzcd}$$ are exact. \hfill $\qed$
\end{prop}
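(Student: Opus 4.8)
The plan is to obtain both sequences by truncating the classical long exact Hom--Ext sequences of \autoref{teo-ext-seq}\,$(b)$. I will treat the first sequence in detail; the second is obtained by the dual argument. Applying the contravariant long exact sequence of \autoref{teo-ext-seq}\,$(b)$ (the one involving the contravariant connecting morphism) to $\eps$, we already know that
$$0\longrightarrow\hom{\A}{C}{X}\longrightarrow\hom{\A}{B}{X}\overset{\psi}{\longrightarrow}\hom{\A}{A}{X}\overset{\delta_X^{\eps}}{\longrightarrow}\ext{\A}{C}{X}$$
is exact, where $\delta_X^{\eps}$ is the contravariant connecting morphism of \eqref{eq-contravariant-connecting}, namely $u\mapsto[u\cdot\eps]$, and $\psi=\hom{\A}{f}{X}$. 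So it suffices to show that replacing the last term $\ext{\A}{C}{X}$ by its subgroup $F(C,X)$ leaves the sequence exact.

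There are two things to check. First, $\delta_X^{\eps}$ actually takes values in $F(C,X)$: for $u\in\hom{\A}{A}{X}$ the class $[u\cdot\eps]$ is represented by the pushout of the $F$-exact sequence $\eps$ along $u$, and since $\E_F$ is closed under pushouts (as observed just before \autoref{prop-caract-subf-addtv}) this pushout is again $F$-exact, so $[u\cdot\eps]\in F(C,X)$. Recalling that $F(C,X)$ is a subgroup of $\ext{\A}{C}{X}$ by \autoref{lema-subf-addfunc}, we thus get an induced homomorphism $\hom{\A}{A}{X}\to F(C,X)$ --- this is the map displayed in the statement --- whose composite with the inclusion $F(C,X)\hookrightarrow\ext{\A}{C}{X}$ is $\delta_X^{\eps}$.

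Second, exactness of the truncated sequence. At $\hom{\A}{C}{X}$ and $\hom{\A}{B}{X}$ nothing has changed, so exactness there is precisely the exactness of the corresponding portion of \autoref{teo-ext-seq}\,$(b)$. At $\hom{\A}{A}{X}$: since the inclusion $F(C,X)\hookrightarrow\ext{\A}{C}{X}$ is injective and sends $0$ to $0$, the kernel of the corestricted map $\hom{\A}{A}{X}\to F(C,X)$ coincides with $\Ker{\delta_X^{\eps}}$, and by \autoref{teo-ext-seq}\,$(b)$ the latter equals $\Img{\psi}=\Img{\hom{\A}{f}{X}}$, which is what is required (note that no exactness is claimed at the final term $F(C,X)$). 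This establishes the first sequence. For the second one, run the same argument with the covariant long exact sequence of \autoref{teo-ext-seq}\,$(b)$, the covariant connecting morphism $\partial_X^{\eps}$ of \eqref{eq-covariant-connecting} (namely $v\mapsto[\eps\cdot v]$), and the closure of $\E_F$ under pullbacks in place of pushouts.

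I do not expect a genuine obstacle here: once one observes that the connecting morphism of the ordinary long exact sequence already lands in the subgroup $F(C,X)$ (resp. $F(X,A)$) --- which is exactly the content of $\E_F$ being closed under pushouts (resp. pullbacks) --- the statement follows formally, since truncating an exact sequence of abelian groups at a subgroup that contains the image of the last map preserves exactness. The only care required is to keep the contravariant and covariant versions straight and to remember that the kernel of a homomorphism is unchanged upon corestricting it along a subgroup inclusion.
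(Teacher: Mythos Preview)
Your approach is correct and coincides with the paper's (the proposition is stated without proof there; the intended argument is precisely to truncate the long exact sequence of \autoref{teo-ext-seq}\,$(b)$ after noting that the connecting morphism lands in $F$ by closure of $\E_F$ under pushouts and pullbacks). Two small remarks: your appeal to \autoref{lema-subf-addfunc} is misplaced, since $F$ is only assumed to be a subfunctor into $\sets$ and $F(C,X)$ need not be a subgroup---but the argument does not actually need this, only that the inclusion $F(C,X)\hookrightarrow\ext{\A}{C}{X}$ is injective and that $0\in F(C,X)$ (the latter because the hypothesis $\eps\in\E_F$ makes $\E_F$ nonempty, whence $\E_0\subseteq\E_F$ by \autoref{lema-eps0-eps}); and you have, correctly per the definitions \eqref{eq-covariant-connecting}--\eqref{eq-contravariant-connecting}, interchanged the labels $\partial_X^{\eps}$ and $\delta_X^{\eps}$ relative to the displayed sequences in the statement, which carry a typo.
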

We end this section by introducing the notion of proper functors. We will see in the next section how this definition fits well into the theory of subfunctors.

\begin{defi}\label{def-proper-subf}
We say that a functor $G\colon \C\to\sets$ is \textit{proper} provided $GX\neq\emptyset$ for all $X\in\C$. 
\end{defi}

Every additive functor from $\A$ to $\ab$ is clearly proper and the only non-proper functor from $\A$ to $\sets$ is the empty functor, that is, the functor which sends everything to empty, as we show next.

\begin{lema}\label{lema-funtor-propio}
If $\C$ is a preadditive category with zero object and $G\colon \C\to\sets$ is a functor such that $GX\neq \emptyset$ for some $X\in\C$, then $G$ is proper.
\end{lema}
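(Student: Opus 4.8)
The plan is to exploit the zero morphisms available in $\C$: the single object $X$ with $GX\neq\emptyset$ admits a morphism to \emph{every} other object, namely the zero morphism, and pushing an element of $GX$ forward along such a morphism exhibits each $GY$ as non-empty.

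First I would fix $X\in\C$ with $GX\neq\emptyset$ and choose an element $x\in GX$. Then, given an arbitrary $Y\in\C$, I would consider the zero morphism $0_{X,Y}\colon X\to Y$; this exists because $\C$ has a zero object $0$, being the composite $X\to 0\to Y$ of the unique morphisms into and out of $0$ (equivalently, it is the neutral element of the abelian group $\hom{\C}{X}{Y}$, which is non-empty since $\C$ is preadditive). Applying the functor $G$ yields a function of sets $G(0_{X,Y})\colon GX\to GY$, whence $G(0_{X,Y})(x)\in GY$ and in particular $GY\neq\emptyset$. As $Y\in\C$ was arbitrary, $GX\neq\emptyset$ for all $X\in\C$, i.e.\ $G$ is proper. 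If one prefers, the argument can be split into two steps using the factorization through $0$: first $G$ applied to $X\to 0$ shows $G(0)\neq\emptyset$, and then $G$ applied to $0\to Y$ shows $GY\neq\emptyset$ for every $Y$.

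I do not anticipate any genuine obstacle here: the whole content is that a functor to $\sets$ sends morphisms to functions and that a function out of a non-empty set has non-empty image, combined with the existence of a morphism $X\to Y$ for every $Y$. The only real choice is which such morphism to transport along, and the zero morphism is the canonical one. This argument also makes transparent the remark preceding the lemma that any additive functor $\A\to\ab$ is proper: there each $GX$ is an abelian group, hence automatically non-empty, so the hypothesis ``$GX\neq\emptyset$ for some $X$'' is not even needed in that case.
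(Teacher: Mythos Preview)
Your proposal is correct and follows essentially the same approach as the paper: use the zero morphism $0_{X,Y}$ to obtain a function $G(0_{X,Y})\colon GX\to GY$ and conclude $GY\neq\emptyset$ from $GX\neq\emptyset$. The paper phrases the final step as ``there are no morphisms $GX\to\emptyset$ in $\sets$'' rather than explicitly pushing forward a chosen element, but this is the same argument.
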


\begin{proof}
For each $Y\in\A$ there is a well-defined correspondence $$G\colon \hom{\C}{X}{Y}\to\hom{\sets}{GX}{GY}.$$ Since $\C$ is preadditive with zero object, there exists a zero morphism $0_{X,Y}\colon X\to Y$ and so $G(0_{X,Y})\colon GX\to GY$ defines a morphism in $\sets$. Now from the hypothesis $GX\neq\emptyset$ and the well-known fact that there are no morphisms $GX\to\emptyset$ in $\sets$, necessarily $GY\neq\emptyset$.
\end{proof}

For our purposes, we list below equivalent conditions for a subfunctor to be proper.

\begin{prop}\label{prop-carac-subf-propio}
The following statements are equivalent for a subfunctor $F$: 
\begin{itemize}
\item[$(a)$] $F$ is proper.
\item[$(b)$] $\E_F$ is non-empty.
\item[$(c)$] $\E_0\subseteq \E_F$.
\item[$(d)$] There exists $[\eps]\in F(C,A)$ for some $A,C\in\A$.
\end{itemize}
\end{prop}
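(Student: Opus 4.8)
The plan is to establish the cycle of implications $(a)\Rightarrow(b)\Rightarrow(c)\Rightarrow(a)$ together with the easy equivalence $(b)\Leftrightarrow(d)$; every step is a direct unwinding of the definitions except for $(b)\Rightarrow(c)$, where \autoref{lema-eps0-eps} does the real work.

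First I would record the two trivial facts involving $\E_F$. By the very definition of $\E_F$, a short exact sequence $\eps\in\E_\A(C,A)$ lies in $\E_F(C,A)$ exactly when $[\eps]\in F(C,A)$; hence $\E_F$ is non-empty if and only if $F(C,A)$ contains an element for some $C,A\in\A$, which is precisely $(b)\Leftrightarrow(d)$. Likewise, if $F$ is proper then $F(C,A)\neq\emptyset$ for \emph{every} pair $C,A$, and picking any class $[\eps]\in F(C,A)\subseteq\ext{\A}{C}{A}$ produces an $F$-exact sequence $\eps\in\E_F(C,A)$, so $(a)\Rightarrow(b)$.

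For $(b)\Rightarrow(c)$ I would invoke the fact, already noted above, that for any subfunctor $F$ the collection $\E_F$ is closed under pushouts and pullbacks. If in addition $\E_F\neq\emptyset$, then \autoref{lema-eps0-eps} applies verbatim and yields $\E_0\subseteq\E_F$. Finally, for $(c)\Rightarrow(a)$, suppose $\E_0\subseteq\E_F$. Given any $C,A\in\A$, the split short exact sequence $\eps_{C,A}$ of \eqref{eq-split-exact} belongs to $\E_0\subseteq\E_F(C,A)$, so $[\eps_{C,A}]\in F(C,A)$; in particular $F(C,A)\neq\emptyset$ for all $C,A\in\A$, i.e.\ $F$ is proper.

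I do not expect any genuine obstacle here: the statement is essentially a bookkeeping exercise, and its only non-formal ingredient is \autoref{lema-eps0-eps}. (One could instead shortcut $(d)\Rightarrow(a)$ by observing that $\A^{\op}\times\A$ is preadditive with zero object $(0,0)$ and applying \autoref{lema-funtor-propio} to $F\colon\A^{\op}\times\A\to\sets$, but the route through $(c)$ keeps the argument inside the language of exact sequences and reuses \autoref{lema-eps0-eps}.)
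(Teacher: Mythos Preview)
Your proof is correct and essentially the same as the paper's: both rely on \autoref{lema-eps0-eps} for the only non-formal step $(b)\Rightarrow(c)$. The paper closes the cycle as $(a)\Rightarrow(b)\Rightarrow(c)\Rightarrow(d)\Rightarrow(a)$, invoking \autoref{lema-funtor-propio} for the final implication, whereas you close it via the direct argument $(c)\Rightarrow(a)$ and treat $(b)\Leftrightarrow(d)$ separately; you even flag the paper's route in your parenthetical remark, so the two arguments differ only in bookkeeping.
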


\begin{proof} 
According to \autoref{prop-biyec-subf-clas}, if $F$ is proper then $\E_F$ is non-empty and closed under pushouts and pullbacks, and therefore $\E_0\subseteq \E_F$ by \autoref{lema-eps0-eps}. Hence ($a$) $\Rightarrow$ ($b$) $\Rightarrow$ ($c$). Clearly ($c$) $\Rightarrow$ ($d$), and ($d$) $\Rightarrow$ ($a$) follows from \autoref{lema-funtor-propio}.
\end{proof}

\section{Closed subfunctors and h.f.\,classes}\label{section-3}

\subsection{f.\,classes and h.f.\,classes of morphisms} The axioms defining f.\,classes and h.f.\,classes were introduced and studied by Buchsbaum \citep{buchsbaum_note_1959, buchsbaum_satellites_1960} in order to develop a theory of relative homological algebra. Later on, Butler and Horrocks \citep{butler_classes_1961} formulated an equivalent set of axioms as follows.

\begin{defi}\label{def-fclass-hfclass}
Let $\mathcal{M}\subseteq \operatorname{Mor}(\A)$ be a class of morphisms in $\A$. Consider the following properties for $\mathcal{M}$:
\begin{itemize}
\item[(A)] $\mathcal{M}$ contains all zero monomorphisms and epimorphisms in $\A$.
\item[(B)] If $f\in \mathcal{M}$ and $f=xgy$ for some isomorphisms $x$ and $y$, then $g\in\mathcal{M}$.
\item[(C)] $f\in\mathcal{M}$ if, and only if, $k_f,c_f\in\mathcal{M}$, where $k_f$ and $c_f$ are the kernel and cokernel of $f$, respectively.
\item[(D)] If $f$ and $gf$ are monomorphisms and $gf\in\mathcal{M}$, then $f\in\mathcal{M}$.
\item[(D$^{\ast}$)] If $g$ and $gf$ are epimorphisms and $gf\in\mathcal{M}$, then $g\in\mathcal{M}$.
\item[(E)] If $f,g\in\mathcal{M}$ are monomorphisms and $gf$ is defined, then $gf\in\mathcal{M}$.
\item[(E$^{\ast}$)] If $f,g\in\mathcal{M}$ are epimorphisms and $gf$ is defined, then $gf\in\mathcal{M}$.
\end{itemize}
We say that $\mathcal{M}$ is an \textit{f.\,class} if satisfies properties (A)--(D$^{\ast}$). If $\mathcal{M}$ satisfies all of them then we call it an \textit{h.f.\,class}.
\end{defi}

\begin{obs}\label{obs-dual-fclass}
Notice that properties (A)--(C) are self-dual, while (D)-(D$^{\ast}$) and (E)-(E$^{\ast}$) are dual of each other. Therefore, $\mathcal{M}$ is an f.\,class in $\A$ if, and only if, $\mathcal{M}^{\op}$ is an f.\,class in $\A^{\op}$, where $\mathcal{M}^{\op}\coloneqq\{f^{\op}\colon f\in\mathcal{M}\}$. Also, $\mathcal{M}$ satisfies (E) if, and only if, $\mathcal{M}^{\op}$ satisfies (E$^{\ast}$), and thus $\mathcal{M}$ is an h.f.\,class if, and only if, $\mathcal{M}^{\op}$ is an h.f.\,class.
\end{obs}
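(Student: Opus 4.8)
The plan is to pass everything through the standard self-duality of abelian categories. Recall that $\A^{\op}$ is again abelian, that a morphism $f\colon A\to B$ of $\A$ becomes $f^{\op}\colon B\to A$ in $\A^{\op}$, that monomorphisms and epimorphisms are interchanged by $(-)^{\op}$ while isomorphisms and zero morphisms are preserved, that the composite $g\circ f$ in $\A$ corresponds to $f^{\op}\circ g^{\op}$ in $\A^{\op}$, and --- crucially --- that the kernel of $f$ in $\A$ is precisely the cokernel of $f^{\op}$ in $\A^{\op}$ while the cokernel of $f$ is the kernel of $f^{\op}$, so that $(k_f)^{\op}=c_{f^{\op}}$ and $(c_f)^{\op}=k_{f^{\op}}$. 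Once this dictionary is in place, the entire statement reduces to translating each of the conditions (A)--(E$^{\ast}$) for $\mathcal{M}$ in $\A$ into a condition for $\mathcal{M}^{\op}$ in $\A^{\op}$ and reading off which of the conditions it turns out to be.

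First I would dispatch (A)--(C). For (A), a zero monomorphism of $\A$ (necessarily with zero source) becomes a zero epimorphism of $\A^{\op}$ and vice versa, so ``$\mathcal{M}$ contains every zero monomorphism and epimorphism of $\A$'' is literally the same assertion as ``$\mathcal{M}^{\op}$ contains every zero epimorphism and monomorphism of $\A^{\op}$''. For (B), if $f=xgy$ with $x,y$ isomorphisms then $f^{\op}=y^{\op}g^{\op}x^{\op}$ with $y^{\op},x^{\op}$ isomorphisms, so the defining implication of (B) for $\mathcal{M}$ is exactly that of (B) for $\mathcal{M}^{\op}$. For (C), the identities $(k_f)^{\op}=c_{f^{\op}}$ and $(c_f)^{\op}=k_{f^{\op}}$ turn the biconditional ``$f\in\mathcal{M}\iff k_f,c_f\in\mathcal{M}$'' into ``$f^{\op}\in\mathcal{M}^{\op}\iff c_{f^{\op}},k_{f^{\op}}\in\mathcal{M}^{\op}$'', which is (C) for $\mathcal{M}^{\op}$. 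Hence (A)--(C) are self-dual. Then I would check (D): starting from $gf$ in $\A$ with $f$ and $gf$ monic, the opposite data are the composite $f^{\op}g^{\op}$ in $\A^{\op}$ with $f^{\op}$ and $f^{\op}g^{\op}$ epic, and the conclusion $f\in\mathcal{M}$ becomes $f^{\op}\in\mathcal{M}^{\op}$; comparing with the axioms, this is precisely (D$^{\ast}$) for $\mathcal{M}^{\op}$, and symmetrically (D$^{\ast}$) for $\mathcal{M}$ is (D) for $\mathcal{M}^{\op}$. The same bookkeeping --- a composite of two members of $\mathcal{M}$ that are monic (resp.\ epic) --- shows (E) for $\mathcal{M}$ is (E$^{\ast}$) for $\mathcal{M}^{\op}$ and (E$^{\ast}$) for $\mathcal{M}$ is (E) for $\mathcal{M}^{\op}$.

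To conclude, I would simply assemble these equivalences: the family $\{(A),(B),(C),(D),(D^{\ast})\}$ holds for $\mathcal{M}$ in $\A$ if and only if it holds for $\mathcal{M}^{\op}$ in $\A^{\op}$, which gives the f.\,class equivalence, and adjoining (E) and (E$^{\ast}$), which duality interchanges, gives the h.f.\,class equivalence; the intermediate claim that $\mathcal{M}$ satisfies (E) iff $\mathcal{M}^{\op}$ satisfies (E$^{\ast}$) has already been recorded along the way. There is no genuine obstacle here --- the only point demanding care is keeping the three simultaneous reversals consistent: the order of composition, the mono$\leftrightarrow$epi swap, and the ker$\leftrightarrow$coker swap. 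It is exactly the reversal of composition order that makes (D) correspond to (D$^{\ast}$) rather than to (D), and that is the single place where an error could slip in.
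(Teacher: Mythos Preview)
Your proposal is correct and follows exactly the approach the paper has in mind: the paper states this as a remark with no proof, simply asserting that (A)--(C) are self-dual while (D)--(D$^{\ast}$) and (E)--(E$^{\ast}$) are dual pairs, and your argument just spells out the standard duality dictionary that justifies this.
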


\begin{lema}\label{lema-morf-sucex}
Let $\mathcal{M}\subseteq \operatorname{Mor}(\A)$ be a class of morphisms and $$0\longrightarrow A\overset{f}{\longrightarrow} B\overset{g}{\longrightarrow} C\longrightarrow 0$$ be an exact sequence in $\A$. If $\mathcal{M}$ satisfies properties \textup{(B)} and \textup{(C)} of \autoref{def-fclass-hfclass}, and $f\in\mathcal{M}$, then $g\in\mathcal{M}$. The dual statement also holds, that is, $f\in\mathcal{M}$ if $g\in\mathcal{M}$.\hfill $\qed$
\end{lema}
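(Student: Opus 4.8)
The plan is to use that in the exact sequence $0\to A\xrightarrow{f}B\xrightarrow{g}C\to 0$ the map $g$ is a cokernel of $f$ and the map $f$ is a kernel of $g$; hence $g$ agrees with the canonical cokernel $c_f$ of $f$ up to composition with an isomorphism, and dually $f$ agrees with the canonical kernel $k_g$ of $g$ up to composition with an isomorphism. Property \textup{(C)} then lets us pass between $f$ and $c_f$ (respectively between $g$ and $k_g$), and property \textup{(B)} lets us pass across the connecting isomorphism.

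In detail, for the first assertion assume $f\in\mathcal{M}$. By property \textup{(C)} we get in particular $c_f\in\mathcal{M}$ (we simply discard the information about $k_f$). Since the sequence is exact, $g$ is a cokernel of $f$, so there is an isomorphism $u\colon\Coker{f}\to C$ with $g=u\circ c_f$; equivalently $c_f=u^{-1}\circ g\circ 1_B$ exhibits the member $c_f\in\mathcal{M}$ factored through the isomorphisms $u^{-1}$ and $1_B$. Property \textup{(B)} then yields $g\in\mathcal{M}$. For the dual assertion assume $g\in\mathcal{M}$; property \textup{(C)} gives $k_g\in\mathcal{M}$, and since $f$ is a kernel of $g$ there is an isomorphism $v\colon A\to\Ker{g}$ with $f=k_g\circ v$, so $k_g=1_B\circ f\circ v^{-1}$ and property \textup{(B)} gives $f\in\mathcal{M}$. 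One may also note that the second assertion is just the first one applied in $\A^{\op}$: by \autoref{obs-dual-fclass} properties \textup{(B)} and \textup{(C)} are self-dual, and the opposite sequence $0\to C\xrightarrow{g^{\op}}B\xrightarrow{f^{\op}}A\to 0$ has $g^{\op}$ as its monomorphism, so "$g\in\mathcal{M}$" becomes "$g^{\op}\in\mathcal{M}^{\op}$" and the first part forces $f^{\op}\in\mathcal{M}^{\op}$, i.e.\ $f\in\mathcal{M}$.

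I do not expect a real obstacle here. The only things to keep track of are the standard fact from abelian category theory that the epimorphism (respectively monomorphism) in a short exact sequence is a cokernel (respectively kernel) of the other map and is therefore determined up to a unique isomorphism, and the small amount of bookkeeping needed to present the resulting factorizations in the exact shape demanded by property \textup{(B)}, namely with the member of $\mathcal{M}$ being the morphism that gets factored. In particular, property \textup{(A)} is not used.
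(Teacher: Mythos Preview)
Your proof is correct and is exactly the argument the paper has in mind; in fact the paper omits the proof entirely (the lemma is stated with a bare $\qed$), so you have simply written out the routine verification using (C) to pass from $f$ to $c_f$ (respectively from $g$ to $k_g$) and (B) to absorb the isomorphism comparing two cokernels (respectively kernels).
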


To any class of morphisms $\mathcal{M}\subseteq\operatorname{Mor}(\A)$ we can associate a collection of short exact sequences $\E_{\mathcal{M}}\subseteq \E$ whose elements are given by $$\eps\colon \ \ \ 0\longrightarrow A\overset{f}{\longrightarrow} B\overset{g}{\longrightarrow} C\longrightarrow 0\,,\quad \text{with}\quad f,g\in\mathcal{M}.$$ We will see that if $\mathcal{M}$ is an f.\,class, then $\E_{\mathcal{M}}$ induces a proper subfunctor, and reciprocally, every proper subfunctor gives rise to an f.\,class of morphisms. This is contained in the work of Butler and Horrocks \citep[Proposition 1.1, Proposition 1.2]{butler_classes_1961}.

\begin{prop}\label{prop-fclase-subf}
If $\mathcal{M}\subseteq \operatorname{Mor}(\A)$ is an f.\,class, then $F_{\mathcal{M}}\coloneqq F_{\E_{\mathcal{M}}}$ is a proper subfunctor.
\end{prop}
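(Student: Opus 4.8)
The plan is to run everything through the dictionary of \autoref{prop-biyec-subf-clas}: to see that $F_{\mathcal{M}}=F_{\E_{\mathcal{M}}}$ is a subfunctor it is enough to verify that the collection $\E_{\mathcal{M}}$ is closed under pushouts and under pullbacks, and properness will then follow from \autoref{prop-carac-subf-propio} as soon as $\E_{\mathcal{M}}$ is known to be non-empty. Non-emptiness is immediate: the zero morphism $0\to 0$ is at once a zero monomorphism and a zero epimorphism, hence lies in $\mathcal{M}$ by axiom (A), so the trivial sequence $0\to 0\to 0\to 0\to 0$ belongs to $\E_{\mathcal{M}}$. (Less degenerately, axiom (C) applied to the exact pair of $1_X$ shows $1_X\in\mathcal{M}$ for all $X$, and then axioms (D) and (D$^{\ast}$) applied to $\pi_A\mu_A=1_A$ and $\pi_C\mu_C=1_C$ place every split sequence $\eps_{C,A}$ of \eqref{eq-split-exact} in $\E_{\mathcal{M}}$; together with the closure properties this re-proves $\E_0\subseteq\E_{\mathcal{M}}$, cf.\ \autoref{lema-eps0-eps}.)

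Before treating the closure properties I would first record that $\E_{\mathcal{M}}$ is closed under isomorphisms, which is genuinely needed for the formulation used in the paper. Indeed, if $(f,g,h)\colon\eps\to\eta$ is an isomorphism of short exact sequences with $\eps\in\E_{\mathcal{M}}$, the two commuting squares write each outer morphism of $\eta$ as a composite of an outer morphism of $\eps$ with isomorphisms, so axiom (B) forces both outer morphisms of $\eta$ into $\mathcal{M}$; thus $\eta\in\E_{\mathcal{M}}$. Next, fix $\eps\colon 0\to A\overset{i}{\to}B\overset{p}{\to}C\to 0$ in $\E_{\mathcal{M}}$ and a morphism $f\colon A\to A'$, and form the pushout $f\cdot\eps\colon 0\to A'\overset{j}{\to}B'\overset{q}{\to}C\to 0$ together with the morphism $(f,l,1_C)\colon\eps\to f\cdot\eps$ of the defining pushout diagram. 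Commutativity of its right-hand square gives $p=q\,l$; as $p$ and $q$ are epimorphisms and $p\in\mathcal{M}$, axiom (D$^{\ast}$) yields $q\in\mathcal{M}$, and then \autoref{lema-morf-sucex}, in its dual form, applied to the short exact sequence $f\cdot\eps$ yields $j\in\mathcal{M}$. Hence $f\cdot\eps\in\E_{\mathcal{M}}$, and by closure under isomorphisms every representative of $[f\cdot\eps]$ lies in $\E_{\mathcal{M}}$, which is precisely closure under pushouts.

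Closure under pullbacks is the mirror image: for $g\colon C'\to C$ the pullback $\eps\cdot g\colon 0\to A\overset{i'}{\to}D\overset{p'}{\to}C'\to 0$ carries a morphism $(1_A,m,g)\colon\eps\cdot g\to\eps$ whose left-hand square gives $i=m\,i'$; since $i$ and $i'$ are monomorphisms and $i\in\mathcal{M}$, axiom (D) yields $i'\in\mathcal{M}$, \autoref{lema-morf-sucex} then yields $p'\in\mathcal{M}$, and closure under isomorphisms finishes the case. (One may instead note that the f.\,class axioms are self-dual in the sense of \autoref{obs-dual-fclass} and deduce the pullback case from the pushout case by passing to $\A^{\op}$.) With $\E_{\mathcal{M}}$ shown to be closed under pushouts and pullbacks, \autoref{prop-biyec-subf-clas} gives that $F_{\mathcal{M}}$ is a subfunctor, and since $\E_{\mathcal{M}}$ is non-empty the equivalence of (a) and (b) in \autoref{prop-carac-subf-propio} gives that it is proper.

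I do not expect a genuinely hard step: the argument is an organized bookkeeping exercise with \autoref{def-fclass-hfclass}, \autoref{lema-morf-sucex}, and the bijection of \autoref{prop-biyec-subf-clas}. The two places that deserve care rather than being mechanical are: reading off from the canonical pushout and pullback morphisms of exact sequences exactly which arrows occupy the roles of $g$ and $gf$ in axioms (D) and (D$^{\ast}$); and recalling that ``closed under pushouts/pullbacks'' quantifies over \emph{all} Yoneda representatives, so the isomorphism-closure step, powered by axiom (B), must be in place first.
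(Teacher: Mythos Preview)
Your proof is correct and follows essentially the same route as the paper: verify that $\E_{\mathcal{M}}$ is closed under pushouts and pullbacks via axioms (D) and (D$^{\ast}$) together with \autoref{lema-morf-sucex}, and then invoke \autoref{prop-biyec-subf-clas} and \autoref{prop-carac-subf-propio}. The only notable difference is that you explicitly isolate closure under isomorphisms (via axiom (B)) to cover \emph{all} Yoneda representatives, a point the paper leaves implicit; this is a welcome clarification rather than a genuinely different argument.
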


\begin{proof} According to \autoref{prop-biyec-subf-clas}, for $F_{\mathcal{M}}$ to be a subfunctor it is enough to see that the collection $\E_{\mathcal{M}}$ is closed under pullbacks and pushouts. For this, let $\eps\in\E_{\mathcal{M}}$ be such that $[\eps]\in \E_{\mathcal{M}}(A,B)$, and let $f\in\hom{\A}{X}{A}$. Let us consider the commutative diagram with exact rows arising from the pullback of $\eps$ along $f$, where by definition $i,d\in\mathcal{M}$: $$\begin{tikzcd}
	{\eps\cdot f\colon\ \ \ 0} & B & Z & X & 0 \\
	{\phantom{{}\cdot f}\eps\colon\ \ \ 0} & B & W & A & 0
	\arrow[from=1-1, to=1-2]
	\arrow["j", from=1-2, to=1-3]
	\arrow["z", from=1-3, to=1-4]
	\arrow[from=1-4, to=1-5]
	\arrow[from=2-1, to=2-2]
	\arrow["d"', from=2-3, to=2-4]
	\arrow[from=2-4, to=2-5]
	\arrow[Rightarrow, no head, from=1-2, to=2-2]
	\arrow["h"', from=1-3, to=2-3]
	\arrow["f", from=1-4, to=2-4]
	\arrow["i"', from=2-2, to=2-3]
	\arrow["\textup{\textsc{pb}}"{anchor=center}, draw=none, from=1-3, to=2-4]
\end{tikzcd}$$ Since $\mathcal{M}$ is an f.\,class and both $hj=i\in\mathcal{M}$ and $j$ are monomorphisms, then $j\in\mathcal{M}$ by (D) and so $\eps\cdot f \in \E_{\mathcal{M}}$ by \autoref{lema-morf-sucex}. Hence $\E_{\mathcal{M}}$ is closed under pullbacks. By arguing in a similar way or by duality, we see that $\E_{\mathcal{M}}$ is closed under pushouts.

Finally, from \autoref{prop-carac-subf-propio} we deduce that $F_{\mathcal{M}}$ is proper because being $\mathcal{M}$ an f.\,class, property (A) says that $\E_{\mathcal{M}}$ is non-empty since contains all short exact sequences of the form $$0\longrightarrow A\overset{1_A}\longrightarrow A\longrightarrow 0\longrightarrow 0\,.$$
\end{proof}

Next we recall the construction for obtaining a class of morphisms from a subfunctor. Given a subfunctor $F$, the class $\mathcal{M}_F\subseteq \operatorname{Mor}(\A)$ of \textit{$F$-morphisms} consists of those morphisms $f$ satisfying one of the following conditions:
\begin{itemize}
\item[(M1)] $f$ is monic and there exists $\eps\in\E_{F}$ such that  $$\eps\colon\ \ \ 0\longrightarrow A\overset{f}{\longrightarrow}B\longrightarrow C\longrightarrow 0.$$
\item[(M2)] $f$ is epic and there exists $\eps\in\E_{F}$ such that $$\eps\colon\ \ \  0\longrightarrow A\longrightarrow B\overset{f}{\longrightarrow} C\longrightarrow 0.$$
\item[(M3)] There exists $\eps_1,\eps_2\in\E_F$ such that \begin{align*}
&\eps_1\colon \ \ \  0\longrightarrow \operatorname{Ker}(f)\overset{k_f}{\longrightarrow} A\overset{g}{\longrightarrow} B\longrightarrow 0\,, \\ 
&\eps_2\colon\ \ \ 0\longrightarrow A\overset{h}{\longrightarrow}B\overset{c_f}{\longrightarrow}  \operatorname{Coker}(f)\longrightarrow 0\,.
\end{align*} 
\end{itemize}

As we mentioned before, we will show that the previous construction gives rise to an f.\,class of morphisms whenever the subfunctor is proper. To achieve this the next result will be helpful. 

\begin{lema}\label{lema-isos-ker.coker}
For an additive category $\C$ the following conditions hold:
\begin{itemize}
\item[$(a)$] If the kernel $k_f\colon \Ker{f}\to A$ of a morphism $f\colon A\to B$ exists in $\C$ and $y\colon A\to Y$ is an isomorphism, then the kernel of $fy^{-1}\colon Y\to B$ exists and is given by $yk_f\colon \Ker{f}\to Y$. Hence we have a commutative diagram $$\begin{tikzcd}
	{\operatorname{Ker}(f)} & A & B\, \\
	{\operatorname{Ker}(f)} & Y & B.
	\arrow[Rightarrow, no head, from=1-3, to=2-3]
	\arrow["{k_f}", from=1-1, to=1-2]
	\arrow["f", from=1-2, to=1-3]
	\arrow["{k_{fy^{-1}}}"', from=2-1, to=2-2]
	\arrow["{fy^{-1}}"', from=2-2, to=2-3]
	\arrow[Rightarrow, no head, from=1-1, to=2-1]
	\arrow["y", from=1-2, to=2-2]
\end{tikzcd}$$
\item[$(b)$] If the cokernel $c_f\colon B\to \Coker{f}$ of a morphism $f\colon A\to B$ exists in $\C$ and $x\colon X \to B$ is an isomorphism, then the cokernel of $x^{-1}f\colon A\to X$ exists and is given by $c_fx\colon X\to \Coker{f}$. Hence we have a commutative diagram $$\begin{tikzcd}
	A & X & {\operatorname{Coker}(f)}\,  \\
	A & B & {\operatorname{Coker}(f)}. 
	\arrow["{c_{x^{-1}f}}", from=1-2, to=1-3]
	\arrow["{c_f}"', from=2-2, to=2-3]
	\arrow["x", from=1-2, to=2-2]
	\arrow[Rightarrow, no head, from=1-3, to=2-3]
	\arrow["{x^{-1}f}", from=1-1, to=1-2]
	\arrow[Rightarrow, no head, from=1-1, to=2-1]
	\arrow["f"', from=2-1, to=2-2]
\end{tikzcd}$$
\end{itemize}\hfill \qed
\end{lema}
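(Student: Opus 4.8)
The plan is to verify (a) directly from the universal property of the kernel, and then deduce (b) by passing to the opposite category $\C^{\op}$, in which kernels are cokernels and in which the isomorphism $x\colon X\to B$ of $\C$ becomes an isomorphism $B\to X$; thus (b) is literally the statement (a) read in $\C^{\op}$, and no separate argument is needed.

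For (a), first observe that $(fy^{-1})(yk_f)=fk_f=0$, so $yk_f$ is a candidate kernel of $fy^{-1}$. To check the universal property, take any $t\colon T\to Y$ with $(fy^{-1})t=0$. Then $f(y^{-1}t)=0$, so the universal property of $k_f$ produces a unique $s\colon T\to\Ker{f}$ with $k_f s=y^{-1}t$; left-multiplying by $y$ gives $(yk_f)s=t$. For uniqueness, if $(yk_f)s'=t$ as well, then $k_f s'=y^{-1}t=k_f s$, and since $k_f$ is a monomorphism, $s'=s$. Hence $(\Ker{f},yk_f)$ is a kernel of $fy^{-1}$, i.e.\ $k_{fy^{-1}}$ exists and may be taken to be $yk_f$. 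This is exactly the content of the displayed diagram: its left square records the identification $k_{fy^{-1}}=yk_f$, and its right square commutes because the right vertical arrow is $1_B$ while $(fy^{-1})y=f$.

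Part (b) is then immediate by duality. In $\C^{\op}$ the morphism $f\colon A\to B$ becomes $f^{\op}\colon B\to A$, its kernel in $\C^{\op}$ is the morphism underlying $c_f\colon B\to\Coker{f}$, and $x$ becomes an isomorphism of $\C^{\op}$; applying (a) in $\C^{\op}$ shows that the kernel of $f^{\op}\circ(x^{\op})^{-1}$ in $\C^{\op}$ is (up to the canonical identification) $x^{\op}\circ c_f$, which upon reversing arrows back in $\C$ says precisely that the cokernel of $x^{-1}f$ exists and is given by $c_f x$, with the asserted commuting diagram. There is essentially no obstacle in this proof; the only thing requiring a little care is bookkeeping of which square in each diagram is the non-trivial one and which is commutative merely because a vertical arrow is an identity, everything else being a one-line diagram chase.
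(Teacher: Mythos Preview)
Your proof is correct. The paper does not supply a proof of this lemma at all (it is stated with an immediate \qed), so your direct verification of (a) via the universal property together with the duality reduction for (b) is precisely the routine argument the reader is expected to fill in.
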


\begin{prop}\label{prop-subf-fclase}
If $F$ is a proper subfunctor, then the class $\mathcal{M}_{F}$ of $F$-morphisms is an f.\,class.
\end{prop}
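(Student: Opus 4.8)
The goal is to verify that $\mathcal{M}_F$ satisfies all six axioms (A)--(D$^{\ast}$) of \autoref{def-fclass-hfclass}. My plan is to go through them one at a time, using throughout the bijection of \autoref{prop-biyec-subf-clas} together with the fact (\autoref{prop-carac-subf-propio}) that since $F$ is proper, $\E_0\subseteq\E_F$; and using the exact sequences of \autoref{prop-subf-seq} to recognize $F$-exactness after pullback or pushout.

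First I would set up the basic dictionary. By the very definition of $\mathcal{M}_F$ via (M1)--(M3), a \emph{monomorphism} $f\colon A\to B$ lies in $\mathcal{M}_F$ if and only if its canonical short exact sequence $0\to A\overset{f}\to B\to\Coker{f}\to 0$ is in $\E_F$; dually for epimorphisms via the kernel sequence; and a general morphism $f$ lies in $\mathcal{M}_F$ iff both the epi part $A\to\Img{f}$ and the mono part $\Img{f}\to B$ do, equivalently iff $k_f,c_f\in\mathcal{M}_F$ — which is exactly (C). So (C) is essentially built in, once I check that the two sequences $\eps_1,\eps_2$ of (M3) are determined up to Yoneda equivalence by $f$ (they are, since kernels and cokernels are unique up to isomorphism, and $\E_F$ is closed under isomorphisms by \autoref{lema-equiv-clscf-clsbaer}($a$)). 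Axiom (A): zero monomorphisms are of the form $0\to 0\to B$ up to iso, whose cokernel sequence is $0\to 0\to B\overset{1}\to B\to 0\in\E_0\subseteq\E_F$; dually for zero epimorphisms; so (A) holds. Axiom (B): if $f=xgy$ with $x,y$ isomorphisms and $f\in\mathcal{M}_F$, I factor $f$ through its image, note that $x,y$ isomorphisms force the image factorizations of $f$ and $g$ to agree up to isomorphism, and invoke \autoref{lema-isos-ker.coker} together with closure of $\E_F$ under isomorphisms to conclude $g\in\mathcal{M}_F$.

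The substantive axioms are (D) and (D$^{\ast}$), and by \autoref{obs-dual-fclass} it suffices to do (D). So suppose $f\colon A\to B$ and $gf\colon A\to D$ are monomorphisms with $gf\in\mathcal{M}_F$; I must show $f\in\mathcal{M}_F$, i.e.\ that $0\to A\overset{f}\to B\to\Coker{f}\to 0$ is $F$-exact. The natural move is a pullback: form the pullback of the $F$-exact sequence $\eps\colon 0\to A\overset{gf}\to D\to\Coker{gf}\to 0$ along a suitable map, or better, build the commutative diagram whose bottom row is $\eps$ and whose top row is the cokernel sequence of $f$, with the map $g\colon B\to D$ connecting them; one checks (e.g.\ by the snake lemma or a direct diagram chase in $\A$, which is legitimate since we only use kernels, cokernels and their universal properties) that the square $A\to B\to D$ with $A\to B\to\Coker{f}$ exhibits the cokernel sequence of $f$ as a \emph{pullback} of $\eps$ along the induced monomorphism $\Coker{f}\to\Coker{gf}$. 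Since $\E_F$ is closed under pullbacks, $0\to A\overset{f}\to B\to\Coker{f}\to 0\in\E_F$, hence $f\in\mathcal{M}_F$.

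I expect the main obstacle to be exactly this last step: producing the right pullback square and proving that the relevant comparison map $\Coker{f}\to\Coker{gf}$ is monic with the stated pullback property, all by elementary means — no embedding theorem, no element-chasing beyond what diagram lemmas in an abelian category permit. The clean way is probably: since $f$ and $gf$ are monic, $g$ restricted appropriately is monic on the relevant subobject, the $3\times3$/snake lemma gives a short exact sequence $0\to\Coker{f}\to\Coker{gf}\to\Coker{g'}\to 0$ for an appropriate $g'$, and the middle square of the resulting $3\times 3$ diagram is the desired pullback. I would isolate this as the heart of the argument and spell it out carefully; the remaining axioms (A), (B), (C) are then short, and (D$^{\ast}$) follows by the duality remark \autoref{obs-dual-fclass}. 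Finally, having (A)--(D$^{\ast}$), $\mathcal{M}_F$ is an f.\,class by definition.
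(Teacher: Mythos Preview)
Your plan is correct and matches the paper's proof almost step for step: (A) via $\E_0\subseteq\E_F$, (B) via \autoref{lema-isos-ker.coker} and closure under isomorphisms, (C) essentially by definition, and (D) by recognizing the cokernel sequence of $f$ as a pullback of the $F$-exact sequence for $gf$ along the induced map on cokernels, with (D$^\ast$) by duality. The ``main obstacle'' you flag is lighter than you fear: once you have the morphism of short exact sequences $(1_A,g,w)\colon\eta\to\eps$, \autoref{prop-propiedades-pullback-pushout} already gives $[\eta]=[\eps\cdot w]$, so $\eta\in\E_F$ by closure under pullbacks---no separate snake-lemma verification is needed.
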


\begin{proof}
We must check that $\mathcal{M}_F$ satisfies properties (A)--(D$^{\ast}$). In order to avoid repeating arguments we recall from \autoref{prop-biyec-subf-clas} and \autoref{prop-carac-subf-propio} that the collection $\E_F$ of $F$-exact sequences is closed under pushouts, pullbacks and isomorphisms, and contains the class $\E_0$ of split short exact sequences.

(A)\ For every $C\in\C$, the exact sequences $$0\longrightarrow C\overset{1_C}{\longrightarrow} C \longrightarrow  0\longrightarrow 0\ \quad \text{and}\quad 0\longrightarrow 0\longrightarrow C\overset{1_C}{\longrightarrow} C \longrightarrow 0$$ both belong to $\eps_F$ since all split short exact sequences are $F$-exact. Hence $\mathcal{M}_F$ contains all zero monomorphisms and epimorphisms.

(B)\ Let $f=xgy\in\mathcal{M}_F$ with $x\colon X\to B$ and $y\colon A\to Y$ isomorphisms. Then one of the following cases applies:

(i)\ If there exists $\eps\in\E_F$ such that $f$ appears as its monomorphism, then by taking the pushout of $\eps$ along $y$ we obtain the following commutative diagram with exact rows $$\begin{tikzcd}
	{\phantom{y\cdot}\eps\colon\ \ \ 0} & A & B & C & {0\,\phantom{.}} \\
	{y\cdot \eps\colon\ \ \ 0} & Y & X & C & {0\,.}
	\arrow[Rightarrow, no head, from=1-4, to=2-4]
	\arrow[from=1-1, to=1-2]
	\arrow["f", from=1-2, to=1-3]
	\arrow["h", from=1-3, to=1-4]
	\arrow[from=1-4, to=1-5]
	\arrow[from=2-1, to=2-2]
	\arrow["g"', from=2-2, to=2-3]
	\arrow["hx"', from=2-3, to=2-4]
	\arrow[from=2-4, to=2-5]
	\arrow["y"', from=1-2, to=2-2]
	\arrow["{x^{-1}}", from=1-3, to=2-3]
	\arrow["\textsc{po}"{anchor=center}, draw=none, from=1-2, to=2-3]
\end{tikzcd}$$ Thus $y\cdot\eps\in\E_F$ and according to (M1) in the definition of $\mathcal{M}_F$, this means that $g\in\mathcal{M}_F$. 

(ii)\ A similar reasoning as the previous one shows that $g\in\mathcal{M}_F$ in case there is some exact sequence $\eps\in\E_F$ containing $f$ as its epimorphism. 

(iii)\ If there are exact sequences $\eps_1\in\E_F$ containing $k_f$ as its monomorphism and $\eps_2\in\E_F$ containing $c_f$ as its epimorphism, given that $x$ and $y$ are isomorphisms then by \autoref{lema-isos-ker.coker} we have the equalities \begin{align*}
&\Ker{g}=\Ker{x^{-1}fy^{-1}}=\Ker{fy^{-1}}= \Ker{f},\\
&\Coker{g}=\Coker{x^{-1}fy^{-1}}= \Coker{x^{-1}f}= \Coker{f}.
\end{align*}
Hence we can form the following commutative  diagrams where the rows are exact because the vertical arrows are all isomorphisms: $$\begin{tikzcd}
	{\phantom{\eta_2}\eps_1\colon \ \ \ 0} & {\Ker{f}} & A & H & 0 \\
	{\phantom{\eps_2} \eta_1\colon \ \ \ 0} & {\Ker{g}} & Y & H & 0 \\
	{\phantom{\eps_2} \eta_2\colon \ \ \ 0} & M & X & {\operatorname{Coker}(g)} & 0 \\
	{\phantom{\eta_2} \eps_2\colon\ \ \ 0} & M & B & {\operatorname{Coker}(f)} & 0
	\arrow["{c_g}", from=3-3, to=3-4]
	\arrow[from=3-4, to=3-5]
	\arrow["{c_f}"', from=4-3, to=4-4]
	\arrow[from=4-4, to=4-5]
	\arrow["x", from=3-3, to=4-3]
	\arrow[Rightarrow, no head, from=3-4, to=4-4]
	\arrow["{x^{-1}m}", from=3-2, to=3-3]
	\arrow[Rightarrow, no head, from=3-2, to=4-2]
	\arrow["m"', from=4-2, to=4-3]
	\arrow[from=3-1, to=3-2]
	\arrow[from=4-1, to=4-2]
	\arrow[from=1-1, to=1-2]
	\arrow["{k_f}", from=1-2, to=1-3]
	\arrow["h", from=1-3, to=1-4]
	\arrow[from=1-4, to=1-5]
	\arrow[from=2-1, to=2-2]
	\arrow["{k_g}"', from=2-2, to=2-3]
	\arrow["{hy^{-1}}"', from=2-3, to=2-4]
	\arrow[from=2-4, to=2-5]
	\arrow[Rightarrow, no head, from=1-4, to=2-4]
	\arrow["y", from=1-3, to=2-3]
	\arrow[Rightarrow, no head, from=1-2, to=2-2]
\end{tikzcd}$$ Thus $\eta_1,\eta_2\in\E_F$ and according to (M3) in the definition of $\mathcal{M}_F$, $g\in\mathcal{M}_F$.

(C)\ Let $f\colon A\to B$ be a morphism in $\A$. If $k_f,c_f\in\mathcal{M}_F$, then there are two $F$-exact sequences $\eps_1$ and $\eps_2$, the first containing $k_f$ as its monomorphism and the latter containing $c_f$ as its epimorphism, which by (M3) of the definition of $\mathcal{M}_F$ says that $f\in\mathcal{M}_F$. 

Reciprocally, if $f\in\mathcal{M}_F$ then one of the following cases hold:

(i)\ If $f$ is the monomorphism of some $F$-exact sequence $\eps\in\E_F$, then $k_f=0\in\mathcal{M}_F$ by (A) and the universal property of the cokernel yields an isomorphism of exact sequences $$\begin{tikzcd}
	{\phantom{\xi}\eta\colon\ \ \ 0} & A & B & {\Coker{f}} & 0\,\phantom{.} \\
	{\phantom{\eta}\eps\colon\ \ \ 0} & A & B & C & 0\,.
	\arrow[from=1-1, to=1-2]
	\arrow["f", from=1-2, to=1-3]
	\arrow["{c_f}", from=1-3, to=1-4]
	\arrow[from=1-4, to=1-5]
	\arrow[from=2-1, to=2-2]
	\arrow["f"', from=2-2, to=2-3]
	\arrow["b"', from=2-3, to=2-4]
	\arrow[from=2-4, to=2-5]
	\arrow["{\exists !t}", dashed, from=1-4, to=2-4]
	\arrow[Rightarrow, no head, from=1-3, to=2-3]
	\arrow[Rightarrow, no head, from=1-2, to=2-2]
\end{tikzcd}$$ Hence $\eta\in\mathcal{M}_F$ and by (M2), $c_f\in\mathcal{M}_F$.

(ii)\ In a dual manner as in the previous case, we see that $k_f,c_f\in\mathcal{M}_F$ if $f$ is the epimorphism of some $F$-exact sequence.

(iii)\ If there exists $\eps_1,\eps_2\in\E_F$ such that $k_g$ is the monomorphism of $\eps_1$ and $c_f$ is the epimorphism of $\eps_2$, then $k_f\in\mathcal{M}_F$ by case (i) and $c_f\in\mathcal{M}_F$ by case (ii).

(D)\ If $f\colon A\to B$ and $g\colon A\to B$ are morphisms in $\A$ such that $gf\in\mathcal{M}_F$ and $f$ are both monomorphisms, then by the universal property of the cokernel there is a commutative diagram with exact rows and $\eps\in\E_F$: $$\begin{tikzcd}
	{\phantom{\eps} \eta\colon\ \ \ 0} & A & B & {\Coker{f}} & 0 \\
	{\phantom{\eta} \eps\colon\ \ \ 0} & A & B & C & 0
	\arrow[from=1-1, to=1-2]
	\arrow["f", from=1-2, to=1-3]
	\arrow["{c_f}", from=1-3, to=1-4]
	\arrow[from=1-4, to=1-5]
	\arrow[from=2-1, to=2-2]
	\arrow["gf"', from=2-2, to=2-3]
	\arrow["{c_{gf}}"', from=2-3, to=2-4]
	\arrow[from=2-4, to=2-5]
	\arrow["{\exists !w}", dashed, from=1-4, to=2-4]
	\arrow["g", from=1-3, to=2-3]
	\arrow[Rightarrow, no head, from=1-2, to=2-2]
\end{tikzcd}$$ Since $\eta$ is a pullback of $\eps$, then $\eta\in\E_F$ and so, by (M2), $f\in\mathcal{M}_F$.

(D$^{\ast}$)\ Follows in a similar way as we proof (D).
\end{proof}

The results from \autoref{prop-fclase-subf} and \autoref{prop-subf-fclase} suggests that both constructions are mutually inverse. Actually, this is the case as we show next.

\begin{teo}\label{teo-caract-subfpropio-fclases}
There is a bijection between the following two classes: 
\begin{itemize}
\item[($a$)] Proper subfunctors $F$ of $\ext{\A}{-}{-}\colon \A^{\op}\times\A\to\sets$.
\item[($b$)] f.\,classes $\mathcal{M}\subseteq \operatorname{Mor}(\A)$.
\end{itemize} 
The bijection is given by $F\mapsto \mathcal{M}_F$ and its inverse is given by $\mathcal{M}\mapsto F_{\mathcal{M}}$.
\end{teo}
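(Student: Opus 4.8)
The plan is to show that the two assignments $F \mapsto \mathcal{M}_F$ and $\mathcal{M} \mapsto F_{\mathcal{M}}$ are well defined (which we already have from \autoref{prop-subf-fclase} and \autoref{prop-fclase-subf}) and that they are mutually inverse. By \autoref{prop-biyec-subf-clas} and \autoref{prop-carac-subf-propio}, a proper subfunctor $F$ is completely determined by its collection $\E_F$ of $F$-exact sequences, which is closed under pushouts, pullbacks and isomorphisms and contains $\E_0$; likewise $F_{\mathcal{M}}$ is determined by $\E_{\mathcal{M}}$. So it suffices to prove the two identities $\E_{\mathcal{M}_F} = \E_F$ and $\mathcal{M}_{F_{\mathcal{M}}} = \mathcal{M}$.

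First I would prove $\E_{\mathcal{M}_F} = \E_F$. The inclusion $\E_F \subseteq \E_{\mathcal{M}_F}$ is immediate: if $\eps\colon 0 \to A \to B \to C \to 0$ is $F$-exact, then its monomorphism satisfies (M1) and its epimorphism satisfies (M2), so both lie in $\mathcal{M}_F$, whence $\eps \in \E_{\mathcal{M}_F}$. For the reverse inclusion, take $\eps\colon 0 \to A \overset{f}{\to} B \overset{g}{\to} C \to 0$ with $f, g \in \mathcal{M}_F$; I must show $[\eps] \in F(C,A)$. Since $f$ is a monomorphism in $\mathcal{M}_F$, only cases (M1) or (M3) can witness this. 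In case (M1), there is an $F$-exact sequence $0 \to A \overset{f}{\to} B' \to C' \to 0$ with the \emph{same} map $f$; by the universal property of the cokernel this sequence is isomorphic (via $(1_A, 1_B, t)$) to $\eps$, and since $\E_F$ is closed under isomorphisms, $\eps \in \E_F$. In case (M3) witnessing that $f$ is monic (so $\Ker{f} = 0$), the relevant $F$-exact sequence is $\eps_2\colon 0 \to A \overset{h}{\to} B' \overset{c_f}{\to} \Coker{f} \to 0$; again since $f$ is monic, $c_f$ is the cokernel of $f$ in $\eps$ as well, so $\eps$ and $\eps_2$ are isomorphic and $\eps \in \E_F$. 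This establishes $\E_{\mathcal{M}_F} = \E_F$, hence $F_{\mathcal{M}_F} = F$.

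Next I would prove $\mathcal{M}_{F_{\mathcal{M}}} = \mathcal{M}$ for an f.\,class $\mathcal{M}$; here we may use that $\E_{F_{\mathcal{M}}} = \E_{\mathcal{M}}$ by construction. For $\mathcal{M} \subseteq \mathcal{M}_{F_{\mathcal{M}}}$: given $f \in \mathcal{M}$, property (C) gives $k_f, c_f \in \mathcal{M}$, so by \autoref{lema-morf-sucex} the canonical short exact sequences $0 \to \Ker{f} \overset{k_f}{\to} A \to \Img{f} \to 0$ and $0 \to \Img{f} \to B \overset{c_f}{\to} \Coker{f} \to 0$ have both their maps in $\mathcal{M}$, hence lie in $\E_{\mathcal{M}}$; these are exactly the two sequences $\eps_1, \eps_2$ required by (M3), so $f \in \mathcal{M}_{F_{\mathcal{M}}}$. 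For $\mathcal{M}_{F_{\mathcal{M}}} \subseteq \mathcal{M}$: if $f$ satisfies (M1), then $f$ is the monomorphism of some $\eps \in \E_{\mathcal{M}}$, which by definition of $\E_{\mathcal{M}}$ means $f \in \mathcal{M}$; similarly for (M2); and if $f$ satisfies (M3) via $\eps_1, \eps_2 \in \E_{\mathcal{M}}$, then $k_f \in \mathcal{M}$ (it is the monomorphism of $\eps_1$) and $c_f \in \mathcal{M}$ (it is the epimorphism of $\eps_2$), so $f \in \mathcal{M}$ by property (C). This gives $\mathcal{M}_{F_{\mathcal{M}}} = \mathcal{M}$, completing the proof.

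The main obstacle I anticipate is the reverse inclusion $\E_{\mathcal{M}_F} \subseteq \E_F$ in the first step: one must be careful that membership of $f$ in $\mathcal{M}_F$ is witnessed by a sequence with literally the \emph{same} monomorphism $f$ (cases M1 and the monic part of M3), not merely an abstractly isomorphic one, so that the universal property of kernels/cokernels can be invoked to produce an honest isomorphism of short exact sequences in $\E_{\A}$; then closure of $\E_F$ under isomorphisms (\autoref{lema-equiv-clscf-clsbaer}(a)) does the rest. Everything else is a routine unwinding of the definitions (M1)--(M3) together with properties (A)--(C) of f.\,classes and \autoref{lema-morf-sucex}.
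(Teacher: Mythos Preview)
Your proposal is correct and follows essentially the same route as the paper: both reduce to proving $\E_{\mathcal{M}_F}=\E_F$ (via the universal property of cokernels and closure of $\E_F$ under isomorphisms) and $\mathcal{M}_{F_{\mathcal{M}}}=\mathcal{M}$ (via property (C) and the definition of $\E_{\mathcal{M}}$); your uniform use of (M3) for the inclusion $\mathcal{M}\subseteq\mathcal{M}_{F_{\mathcal{M}}}$ is a mild streamlining of the paper's three-case split into monic/epic/neither. One harmless oversight: a monic $f\in\mathcal{M}_F$ could also be witnessed by (M2) when $f$ happens to be an isomorphism, but then $\eps$ is split and lies in $\E_0\subseteq\E_F$ anyway.
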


\begin{proof}
We first observe that the maps $F\mapsto \mathcal{M}_F$ and $\mathcal{M}\mapsto F_{\mathcal{M}}$ are well-defined by \autoref{prop-subf-fclase} and \autoref{prop-fclase-subf}, respectively. We will show that $F_{\mathcal{M}_F}=F$ and $\mathcal{M}_{F_{\mathcal{M}}}=\mathcal{M}$, for all proper subfunctors $F$ and all f.\,classes of morphisms $\mathcal{M}$. 

From the bijection of \autoref{prop-biyec-subf-clas}, the equality $F_{\mathcal{M}_F}=F$ holds once we show $\E_{F_{\mathcal{M}_F}}=\E_F$. Since by definition $\E_{F_{\mathcal{M}_F}}=\E_{\mathcal{M}_F}$, we only need to see that $\E_{\mathcal{M}_F}=\E_F$. 

Consider $\eps\in\E_{\mathcal{M}_F}$ of the form $$\eps\colon \ \ \ 0\longrightarrow A\overset{f}{\longrightarrow} B \overset{g}{\longrightarrow} C \longrightarrow 0\,,\quad \text{with}\quad f,g\in\mathcal{M}_F.$$ Since $f\in\mathcal{M}_F$ is monic, by definition there is some $\eta\in\E_F$ such that $$\eta\colon\ \ \ 0\longrightarrow A\overset{f}{\longrightarrow} B \overset{h}{\longrightarrow} X \longrightarrow 0\,.$$ Hence there exists an isomorphism of exact sequences $$\begin{tikzcd}
	{\phantom{\eta}\eps\colon\ \ \ 0} & A & B & C & 0\,\phantom{.} \\
	{\phantom{\eps}\eta\colon\ \ \ 0} & A & B & X & 0\,.
	\arrow[from=1-1, to=1-2]
	\arrow["f", from=1-2, to=1-3]
	\arrow["g", from=1-3, to=1-4]
	\arrow[from=1-4, to=1-5]
	\arrow[from=2-1, to=2-2]
	\arrow["f"', from=2-2, to=2-3]
	\arrow["h"', from=2-3, to=2-4]
	\arrow[from=2-4, to=2-5]
	\arrow[Rightarrow, no head, from=1-2, to=2-2]
	\arrow[Rightarrow, no head, from=1-3, to=2-3]
	\arrow["{\exists !t}", dashed, from=1-4, to=2-4]
\end{tikzcd}$$ Thus $\eps\in\E_F$. This shows that $\E_{\mathcal{M}_F}\subseteq \E_F$. The other inclusion $\E_F\subseteq\E_{\mathcal{M}_F}$ is straightforward given that morphisms appearing in an $F$-exact sequence are by definition in the induced f.\,class $\mathcal{M}_F$.

Next we show $\mathcal{M}_{F_{\mathcal{M}}}=\mathcal{M}$. If $f\in\mathcal{M}_{F_{\mathcal{M}}}$ then one of the following cases applies:	
\begin{itemize}
\item[(i)] $f$ is either a monomorphism or an epimorphism and there exists $\eps\in\E_{F_{\mathcal{M}}}=\E_{\mathcal{M}}$ containing $f$. This means, according to the definition of $\E_{\mathcal{M}}$, that $f\in\mathcal{M}$.
\item[(ii)] There are $\eps_1,\eps_2\in\E_{F_{\mathcal{M}}}=\E_{\mathcal{M}}$ such that $\eps_1$ contains $k_f$ and $\eps_2$ contains $c_f$. As in case (i), $k_f,c_f\in\mathcal{M}$ and since $\mathcal{M}$ is an f.\,class, $f\in\mathcal{M}$ by (C).
\end{itemize} In any of the previous situations we conclude that $f\in\mathcal{M}$. Hence $\mathcal{M}_{F_{\mathcal{M}}}\subseteq \mathcal{M}$.

Now, if $f\in\mathcal{M}$, since $k_f,c_f\in\mathcal{M}$, then one of the following cases applies:
\begin{itemize}
\item[(i)] If $f$ is monic the sequence $$0\longrightarrow A\overset{f}{\longrightarrow} B\overset{c_f}{\longrightarrow} 	\Coker{f}\longrightarrow 0$$ belongs to $\E_{\mathcal{M}}=\E_{F_{\mathcal{M}}}$ and therefore $f\in\mathcal{M}_{F_{\mathcal{M}}}$.
\item[(ii)] If $f$ is epic, the sequence $$0\longrightarrow \Ker{f} \overset{k_f}{\longrightarrow} 	A\overset{f}{\longrightarrow} B\longrightarrow 0$$ belongs to $\E_{\mathcal{M}}=\E_{F_{\mathcal{M}}}$ and therefore $f\in\mathcal{M}_{F_{\mathcal{M}}}$.
\item[(iii)] If $f$ is not monic nor epic, then $k_f\in\mathcal{M}_{F_{\mathcal{M}}}$ by (i) and $c_f\in\mathcal{M}_{F_{\mathcal{M}}}$ by (ii), and since $\mathcal{M}_{F_{\mathcal{M}}}$ is an f.\,class then $f\in\mathcal{M}_{F_{\mathcal{M}}}$.
\end{itemize} In any of the previous situations we conclude that $f\in\mathcal{M}_{}$. Hence $\mathcal{M}\subseteq\mathcal{M}_{F_{\mathcal{M}}} $. 
\end{proof}

From the previous result together with \autoref{prop-biyec-subf-clas} and \autoref{prop-carac-subf-propio} we immediately deduce the following.

\begin{coro}
There is a bijection between any two of the following classes:
\begin{itemize}
\item[$(a)$] Proper subfunctors $F$ of $\ext{\A}{-}{-}\colon\A^{\op}\times\A\to\sets$.
\item[$(b)$] f.\,classes $\mathcal{M}\subseteq\operatorname{Mor}(\A)$.
\item[$(c)$] Non-empty collections $\E\subseteq \E_{\A}$ closed under pushouts and pullbacks. 
\end{itemize} \hfill $\qed$

\end{coro}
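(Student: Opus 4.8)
The plan is to assemble the three claimed bijections out of results already in hand, so that essentially nothing new needs to be proved. First I would note that the bijection $F\mapsto\E_F$, $\E\mapsto F_\E$ of \autoref{prop-biyec-subf-clas} restricts to a bijection between class $(a)$ and class $(c)$. On the one hand, if $F$ is a proper subfunctor then $\E_F$ is non-empty by \autoref{prop-carac-subf-propio}, so $\E_F$ belongs to class $(c)$. On the other hand, if $\E\subseteq\E_\A$ is a non-empty collection of short exact sequences closed under pushouts and pullbacks, then $\E_{F_\E}=\E$ is non-empty, whence $F_\E$ is proper, again by \autoref{prop-carac-subf-propio}; thus $F_\E$ belongs to class $(a)$. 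Since $F\mapsto\E_F$ and $\E\mapsto F_\E$ are mutually inverse on the larger classes of \autoref{prop-biyec-subf-clas}, their restrictions are mutually inverse bijections between $(a)$ and $(c)$.

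Next, \autoref{teo-caract-subfpropio-fclases} already gives a bijection between class $(a)$ and class $(b)$, namely $F\mapsto\mathcal{M}_F$ with inverse $\mathcal{M}\mapsto F_\mathcal{M}$. Composing this with the bijection between $(a)$ and $(c)$ just described yields a bijection between $(b)$ and $(c)$, explicitly $\mathcal{M}\mapsto\E_{F_\mathcal{M}}=\E_\mathcal{M}$ with inverse $\E\mapsto\mathcal{M}_{F_\E}$. Hence any two of the three classes are in bijection, and these bijections fit together into a commuting triangle, so one may speak unambiguously of \emph{the} correspondence between f.\,classes, proper subfunctors, and non-empty collections of short exact sequences closed under pushouts and pullbacks.

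I do not expect a genuine obstacle here: every ingredient has already been established, and the only point that needs checking is that the correspondence of \autoref{prop-biyec-subf-clas} carries proper subfunctors exactly to non-empty collections and back, which is immediate from the equivalence $(a)\Leftrightarrow(b)$ in \autoref{prop-carac-subf-propio}. The mildest bit of care is the verification that the three maps commute, but this is purely formal once the explicit formulas above are written down, and it is exactly what justifies the phrase ``any two of'' in the statement.
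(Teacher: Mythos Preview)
Your proposal is correct and follows essentially the same approach as the paper, which simply states that the corollary is immediate from \autoref{teo-caract-subfpropio-fclases} together with \autoref{prop-biyec-subf-clas} and \autoref{prop-carac-subf-propio}. You have merely spelled out the obvious details of why the bijection of \autoref{prop-biyec-subf-clas} restricts appropriately and how the three bijections fit together.
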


We end this subsection by giving conditions for a proper subfunctor to be additive. This is stated in \citep[Theorem 1.1]{butler_classes_1961}.

\begin{prop}\label{prop-E1oE2-aditivo}
Let $F$ be a proper subfunctor and $\mathcal{M}_F$ be the f.\,class induced by $F$ (see \autoref{teo-caract-subfpropio-fclases}). If $\mathcal{M}_F$ satisfies either \textup{(E)} or \textup{(E$^{\ast}$)}, then $F$ is an additive subfunctor.
\end{prop}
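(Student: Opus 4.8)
The plan is to reduce the statement to the closure of $\E_F$ under direct sums and then to exploit the multiplicativity axiom. By \autoref{prop-caract-subf-addtv}, $F$ is additive if and only if $\E_F$ is non-empty and closed under direct sums; non-emptiness is immediate from properness via \autoref{prop-carac-subf-propio}, so the whole task is to show that $\eps_1\oplus\eps_2\in\E_F$ whenever $\eps_k\colon 0\to A_k\xrightarrow{i_k}B_k\xrightarrow{p_k}C_k\to 0$ lies in $\E_F$ for $k=1,2$. Since $\E_F=\E_{\mathcal{M}_F}$ by \autoref{teo-caract-subfpropio-fclases}, it suffices to prove that the two structure maps $i_1\oplus i_2$ and $p_1\oplus p_2$ of $\eps_1\oplus\eps_2$ both belong to the f.\,class $\mathcal{M}_F$; and by \autoref{lema-morf-sucex} it is enough to establish this for one of the two, the other following from axioms (B) and (C).

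Assume first that $\mathcal{M}_F$ satisfies (E). I would factor the monomorphism $i_1\oplus i_2$ as the composite
$$A_1\oplus A_2\xrightarrow{\,1_{A_1}\oplus i_2\,}A_1\oplus B_2\xrightarrow{\,i_1\oplus 1_{B_2}\,}B_1\oplus B_2,$$
and observe that each factor is a monomorphism lying in $\mathcal{M}_F$. Indeed, the square with vertical maps $\mu_{A_2}\colon A_2\to A_1\oplus A_2$ and $\mu_{B_2}\colon B_2\to A_1\oplus B_2$ and horizontal maps $i_2$ and $1_{A_1}\oplus i_2$ is a pushout, so $1_{A_1}\oplus i_2$ is the monomorphism of a representative of $[\mu_{A_2}\cdot\eps_2]$, which is $F$-exact because $\E_F$ is closed under pushouts; hence $1_{A_1}\oplus i_2\in\mathcal{M}_F$ by (M1). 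Likewise, $i_1\oplus 1_{B_2}$ is the monomorphism of a representative of $[\mu_{A_1}\cdot\eps_1]$ for $\mu_{A_1}\colon A_1\to A_1\oplus B_2$, so it too lies in $\mathcal{M}_F$. Applying (E) to this composite of $F$-monomorphisms gives $i_1\oplus i_2\in\mathcal{M}_F$, and then \autoref{lema-morf-sucex} gives $p_1\oplus p_2\in\mathcal{M}_F$; thus $\eps_1\oplus\eps_2\in\E_{\mathcal{M}_F}=\E_F$.

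The case in which $\mathcal{M}_F$ satisfies (E$^{\ast}$) is dual. Here one factors the epimorphism $p_1\oplus p_2$ as $B_1\oplus B_2\xrightarrow{1_{B_1}\oplus p_2}B_1\oplus C_2\xrightarrow{p_1\oplus 1_{C_2}}C_1\oplus C_2$ and identifies $1_{B_1}\oplus p_2$ and $p_1\oplus 1_{C_2}$ as the epimorphisms of representatives of $[\eps_2\cdot\pi_{C_2}]$ and $[\eps_1\cdot\pi_{C_1}]$ along the canonical projections $\pi_{C_2}\colon B_1\oplus C_2\to C_2$ and $\pi_{C_1}\colon C_1\oplus C_2\to C_1$; these are $F$-exact because $\E_F$ is closed under pullbacks, so both factors lie in $\mathcal{M}_F$. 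Then (E$^{\ast}$) yields $p_1\oplus p_2\in\mathcal{M}_F$ and \autoref{lema-morf-sucex} yields $i_1\oplus i_2\in\mathcal{M}_F$, so again $\eps_1\oplus\eps_2\in\E_F$. In either case $\E_F$ is closed under direct sums, and \autoref{prop-caract-subf-addtv} finishes the proof.

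The only genuinely delicate point I anticipate is the diagrammatic bookkeeping in the middle step: one must verify carefully that the relevant squares really are pushouts (respectively pullbacks) and that the induced third map is literally $1_{A_1}\oplus i_2$, $i_1\oplus 1_{B_2}$, and so on. This verification is routine in an abelian category, but it cannot be bypassed by simply writing $\eps_1\oplus\eps_2$ as a direct sum of each $\eps_k$ with a split exact sequence, since closure under direct sums is exactly what is being proved; the point of the argument is precisely to replace that forbidden move by iterated pushouts (or pullbacks) together with the multiplicativity axiom (E) (or (E$^{\ast}$)).
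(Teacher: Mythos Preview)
Your proposal is correct and follows essentially the same route as the paper's proof: both reduce to closure of $\E_F$ under direct sums, factor $i_1\oplus i_2$ as a composite of two monomorphisms of the form $1\oplus i_2$ and $i_1\oplus 1$, identify each factor as the monomorphism of a pushout of $\eps_k$ along a canonical inclusion, and then invoke (E). The only cosmetic differences are that the paper works with sequences sharing common endpoints $A,C$ (which suffices via \autoref{lema-equiv-clscf-clsbaer}) and cites \autoref{prop-biyec-subf-clas} rather than \autoref{prop-caract-subf-addtv}, whereas you treat arbitrary endpoints and spell out the dual (E$^{\ast}$) case explicitly instead of dismissing it by duality.
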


\begin{proof}
In view of \autoref{prop-biyec-subf-clas} we will show that $\E_F$ is closed under finite direct sums provided $\mathcal{M}_F$ satisfies (E). The case when $\mathcal{M}_F$ satisfies (E$^{\ast}$) is deal in a similar manner or by duality. 

For $k=1,2$, let $\eps_k\in\E_F(C,A)$ be given by $$\eps_k\colon\ \ \ 0\longrightarrow A\overset{f_k}{\longrightarrow} B_k\overset{g_k}{\longrightarrow} C\longrightarrow 0\, .$$ We will see that $\eps_1\oplus \eps_2\in\E_F$, or equivalently, that $f_1\oplus f_2\in\mathcal{M}_F$. Indeed, first notice that the monomorphism $f_1\oplus f_2\colon A\oplus A\to B_1\oplus B_2$ factors through a pair of monomorphisms as  $$f_1\oplus f_2=\begin{pmatrix}
f_1 & 0 \\ 0 & f_2
\end{pmatrix}=\begin{pmatrix}
f_1 & 0 \\ 0 & 1_{B_2}
\end{pmatrix} \begin{pmatrix}
1_{B_1} & 0 \\ 0 & f_2
\end{pmatrix}.$$ So the proof will be complete if we are able to see that each one of the monomorphisms on the right side is in $\mathcal{M}_F$, for $\mathcal{M}_F$ satisfying (E) will lead us to $f_1\oplus f_2\in\mathcal{M}_F$. From the commutativity of the diagrams with exact rows displayed below 
$$\begin{tikzcd}[ampersand replacement=\&, column sep = 3em]
	\phantom{\tau_1}\eps_1\colon \ \ \ 0 \& A \& {B_1} \& C \& 0\,\phantom{,} \\
	\phantom{\eps_1}\tau_1\colon \ \ \ 0 \& {A\oplus B_2} \& {B_1\oplus B_2} \& C \& 0\, ,
	\arrow[from=1-1, to=1-2]
	\arrow["{f_1}", from=1-2, to=1-3]
	\arrow["{\begin{psmallmatrix}1 \\ 0\end{psmallmatrix}}"', from=1-2, to=2-2]
	\arrow["{g_1}", from=1-3, to=1-4]
	\arrow["{\begin{psmallmatrix}1 \\ 0\end{psmallmatrix}}", from=1-3, to=2-3]
	\arrow[from=1-4, to=1-5]
	\arrow[from=2-1, to=2-2]
	\arrow["{\begin{psmallmatrix}f_1 & 0 \\ 0 & 1\end{psmallmatrix}}"', from=2-2, to=2-3]
	\arrow["{\begin{psmallmatrix}g_1 & 0\end{psmallmatrix}}"', from=2-3, to=2-4]
	\arrow[equals, from=2-4, to=1-4]
	\arrow[from=2-4, to=2-5]
\end{tikzcd}$$

$$\begin{tikzcd}[ampersand replacement=\&, column sep = 3em]
	\phantom{\tau_2}\eps_2\colon \ \ \ 0 \& A \& {B_2} \& C \& 0\,\phantom{,} \\
	\phantom{\eps_2}\tau_2\colon \ \ \ 0 \& {B_1\oplus A} \& {B_1\oplus B_2} \& C \& 0\, ,
	\arrow[from=1-1, to=1-2]
	\arrow["{f_2}", from=1-2, to=1-3]
	\arrow["{\begin{psmallmatrix}0 \\ 1\end{psmallmatrix}}"', from=1-2, to=2-2]
	\arrow["{g_2}", from=1-3, to=1-4]
	\arrow["{\begin{psmallmatrix}0 \\ 1\end{psmallmatrix}}", from=1-3, to=2-3]
	\arrow[from=1-4, to=1-5]
	\arrow[from=2-1, to=2-2]
	\arrow["{\begin{psmallmatrix}1 & 0 \\ 0 & f_2\end{psmallmatrix}}"', from=2-2, to=2-3]
	\arrow["{\begin{psmallmatrix}0 & g_2\end{psmallmatrix}}"', from=2-3, to=2-4]
	\arrow[equals, from=2-4, to=1-4]
	\arrow[from=2-4, to=2-5]
\end{tikzcd}$$ we deduce that $\tau_1,\tau_2\in\mathcal{E}_F$ as they are pushouts of $\eps_1$ and $\eps_2$, respectively, and $\mathcal{E}_F$ is closed under pushouts. Therefore, $\begin{psmallmatrix}f_1 & 0 \\ 0 & 1_{B_2}\end{psmallmatrix},\begin{psmallmatrix}1_{B_1} & 0 \\ 0 & f_2\end{psmallmatrix}\in\mathcal{M}_F$.
\end{proof}

We will see in \autoref{teo-caract-cerr-hf} that the converse of \autoref{prop-E1oE2-aditivo} holds true.

\subsection{Closed subfunctors} 
The notion of closed subfunctor was very important for the development of relative theories. We will see next the relation between closed subfunctors and h.f.\,classes introduced previously.
\begin{defi}
Let $F\colon \A^{\op}\times \A\to\ab$ be a proper subfunctor. We say that:
\begin{itemize}
\item[(a)] $F$ is \textit{closed on the right} if for each $C\in \A$, the induced functor $F(C,-)\colon \A\to \ab$ is half-exact over the class of $F$-exact sequences, that is, if given an $F$-exact sequence $0\to X\to Y\to Z\to 0$, then $F(C,X)\to F(C,Y)\to F(C,Z)$ is exact in $\ab$.
\item[(b)] $F$ is \textit{closed on the left} if for each $A\in \A$, the induced functor $F(-,A)\colon \A^{\op}\to \ab$ is half-exact over the class of $F$-exact sequences, that is, if given an $F$-exact sequence $0\to X\to Y\to Z\to 0$, then $F(Z,A)\to F(Y,A)\to F(X,A)$ is exact in $\ab$.
\item[(c)] $F$ is \textit{closed} if it is both closed on the right and on the left.
\end{itemize}
\end{defi}

Immediately from the definition we deduce that the closedness conditions only applies to additive subfunctors as we state below.

\begin{prop}
If $F$ is closed on the right (left, respectively) then the induced functors $F(X,-)$ ($F(-,X)$, respectively) are additive, for each $X\in\A$. In particular, every closed subfunctor is additive.
\end{prop}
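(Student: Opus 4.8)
The plan is short: I would deduce the statement from \autoref{lema-subf-addfunc} after observing that a closedness condition forces the relevant one-variable restrictions of $F$ to take values in $\ab$. Concretely, if $F$ is closed on the right then, by the setup of the closedness definition, for each $C\in\A$ the restriction $F(C,-)$ takes values in $\ab$, i.e. $F(C,A)$ is an abelian subgroup of $\ext{\A}{C}{A}$ for every $A\in\A$. Hence, for a fixed $X\in\A$, the functor $F(X,-)$ is a subfunctor of the additive functor $\ext{\A}{X}{-}\colon\A\to\ab$, and so it is additive by \autoref{lema-subf-addfunc}. The case of $F$ closed on the left is dual: there $F(-,X)$ is a subfunctor of the additive functor $\ext{\A}{-}{X}\colon\A^{\op}\to\ab$. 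Finally, a closed subfunctor is in particular closed on the right, so every $F(C,A)$ is a subgroup of $\ext{\A}{C}{A}$, which is exactly the statement that $F$ is a subbifunctor of the additive bifunctor $\ext{\A}{-}{-}\colon\A^{\op}\times\A\to\ab$, i.e. an additive subfunctor.

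If one wants to see the additivity of $F(X,-)$ produced directly from the half-exactness hypothesis, I would argue with biproducts instead. Since $F$ is proper, every split short exact sequence is $F$-exact (\autoref{prop-carac-subf-propio}). Applying closedness on the right with first argument $X$ to the split sequence $0\to Y_1\xrightarrow{\mu_1}Y_1\oplus Y_2\xrightarrow{\pi_2}Y_2\to 0$ shows that
\[
F(X,Y_1)\xrightarrow{F(X,\mu_1)}F(X,Y_1\oplus Y_2)\xrightarrow{F(X,\pi_2)}F(X,Y_2)
\]
is exact in the middle; the biproduct relations $\pi_j\mu_j=1_{Y_j}$ and $\pi_i\mu_j=0$ for $i\ne j$ then show that $F(X,\mu_1)$ is a split monomorphism and $F(X,\pi_2)$ a split epimorphism, so the displayed sequence is a split short exact sequence of abelian groups. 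Consequently $F(X,\mu_1),F(X,\mu_2)$ (equivalently $F(X,\pi_1),F(X,\pi_2)$) exhibit $F(X,Y_1\oplus Y_2)$ as the biproduct of $F(X,Y_1)$ and $F(X,Y_2)$; together with $F(X,0)=0$ (which holds since $\ext{\A}{X}{0}=0$ and $F$ is proper), this says $F(X,-)$ preserves finite biproducts, hence is additive, and dually for $F(-,X)$.

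The only point I anticipate needing care is the implication ``exactness in the middle, plus a split monomorphism on one side and a split epimorphism on the other, gives a biproduct decomposition'': this is valid in $\ab$ (or any abelian category) but would be false for a functor valued merely in pointed sets. It is exactly here --- and in the appeal to \autoref{lema-subf-addfunc} in the first argument --- that one uses that a subfunctor subject to a closedness condition is, by convention, $\ab$-valued, which is the formal counterpart of the remark preceding the proposition that closedness ``only applies to additive subfunctors''. Everything else is routine manipulation of the functoriality of $F$ and of the universal property of the biproduct.
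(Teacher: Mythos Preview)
Your second paragraph is exactly the paper's approach: the paper cites the standard fact that half-exact functors preserve split short exact sequences and hence finite direct sums (referencing Pareigis), and observes via \autoref{lema-eps0-eps} that split sequences are $F$-exact because $F$ is proper. Your biproduct computation simply unfolds that reference.

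Your first paragraph is a genuinely different, shorter route: you note that the definition of ``closed'' already stipulates $F\colon\A^{\op}\times\A\to\ab$, so $F$ is by convention an additive subfunctor in the paper's sense, and then \autoref{lema-subf-addfunc} finishes at once. This is correct, but it essentially reads the conclusion off the hypothesis --- the target category $\ab$ is already built into the setup, so the proposition collapses to a definitional remark. The paper's argument is a bit more informative in that it derives preservation of biproducts from the half-exactness condition itself (the substantive content of closedness) rather than from the ambient $\ab$-valuedness; this is what justifies the paper's preceding sentence that ``the closedness conditions only applies to additive subfunctors'' as a \emph{consequence} rather than a stipulation. Your closing caveat correctly pinpoints that both routes ultimately lean on $F$ being $\ab$-valued; the difference is whether one invokes that directly via \autoref{lema-subf-addfunc} or lets half-exactness on the split $F$-exact sequences produce the biproduct preservation.
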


\begin{proof}
This is essentially the proof that half-exact functors between abelian categories are additive. For this, one shows that half-exact functors preserve split short exact sequences and therefore also preserve finite direct sums (see for instance \citep[Section 4.6, Proposition 1]{pareigis_categories_1970}). By \autoref{lema-eps0-eps}, for any proper subfunctor $F$ we have that the split short exact sequences are $F$-exact and thus, by the above reasoning, each of the induced functors are additive provided $F$ is closed.
\end{proof}

\begin{prop}
Let $F$ be a proper subfunctor and $$\eps\colon\ \ \ 0\longrightarrow A\overset{f}{\longrightarrow} B \overset{g}{\longrightarrow} C\longrightarrow 0$$ be an $F$-exact sequence. If $F$ is closed on the left, then for every $X\in\A$ the induced sequence of abelian groups $$\begin{tikzcd}[column sep = 1em]
	0 & {\hom{\A}{C}{X}} & {\hom{\A}{B}{X}} & {\hom{\A}{A}{X}} \\
	& {F(C,X)} & {F(B,X)} & {F(A,X)}
	\arrow[from=1-1, to=1-2]
	\arrow[from=1-2, to=1-3]
	\arrow[from=1-3, to=1-4]
	\arrow[from=2-3, to=2-4]
	\arrow[from=2-2, to=2-3]
	\arrow["{\delta_{X}^{\eps}}"', from=1-4, to=2-2,out=-8, in=172]{dll}
\end{tikzcd}$$ is exact. Dually, if $F$ is closed on the right, then the induced sequence of abelian groups $$\begin{tikzcd}[column sep = 1em]
	0 & {\hom{\A}{X}{A}} & {\hom{\A}{X}{B}} & {\hom{\A}{X}{C}} \\
	& {F(X,A)} & {F(X,B)} & {F(X,C)}
	\arrow[from=1-1, to=1-2]
	\arrow[from=1-2, to=1-3]
	\arrow[from=1-3, to=1-4]
	\arrow[from=2-3, to=2-4]
	\arrow[from=2-2, to=2-3]
	\arrow["{\partial_{X}^{\eps}}"', from=1-4, to=2-2,out=-8, in=172]{dll}
\end{tikzcd}$$ is exact.
\end{prop}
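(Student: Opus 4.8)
The plan is to combine the long exact sequence already available from \autoref{prop-subf-seq} with the half-exactness coming from the closedness hypothesis, and then to splice the two finite exact sequences together at the connecting morphism. I will only treat the case where $F$ is closed on the left (the other case is dual), so I must establish exactness of
\[
0\to \hom{\A}{C}{X}\to \hom{\A}{B}{X}\to \hom{\A}{A}{X}\xrightarrow{\ \delta_X^{\eps}\ } F(C,X)\to F(B,X)\to F(A,X).
\]
The first four terms, up to and including exactness at $\hom{\A}{A}{X}$ and exactness of $\delta_X^{\eps}$ at $F(C,X)$, are precisely the content of \autoref{prop-subf-seq} applied to the $F$-exact sequence $\eps$, so nothing new is needed there. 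What remains is exactness at $F(C,X)$ on the incoming side — i.e.\ that $\Img{\delta_X^{\eps}}=\Ker{F(g,X)\colon F(C,X)\to F(B,X)}$ — and exactness at $F(B,X)$, i.e.\ that $\Img{F(g,X)}=\Ker{F(f,X)\colon F(B,X)\to F(A,X)}$.

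First I would handle exactness at $F(B,X)$: this is exactly the statement that the sequence $F(C,X)\to F(B,X)\to F(A,X)$ is exact, which is the definition of $F$ being closed on the left, applied to the $F$-exact sequence $\eps\colon 0\to A\to B\to C\to 0$ (reading it contravariantly in the first variable). So this step is immediate from the hypothesis. For exactness at $F(C,X)$, the inclusion $\Img{\delta_X^{\eps}}\subseteq \Ker{F(g,X)}$ is formal: by \autoref{prop-propiedades-pullback-pushout} and the description of the connecting morphism, for $h\in\hom{\A}{A}{X}$ one has $F(g,X)(\delta_X^{\eps}(h))=[\,(h\cdot\eps)\cdot g\,]=[\,h\cdot(\eps\cdot g)\,]$, and $\eps\cdot g$ is a split exact sequence because $\eps$ becomes split after pulling back along its own epimorphism $g$ (this is the standard fact that $\Ext^1_{\A}(g,A)$ kills $[\eps]$, equivalently $g^*\partial^{\eps}=0$, which also follows from \autoref{teo-ext-seq}\,$(b)$); hence the composite is zero in $F(B,X)$. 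The reverse inclusion $\Ker{F(g,X)}\subseteq \Img{\delta_X^{\eps}}$ is the substantive point: given $[\eta]\in F(C,X)$ with $[g\cdot(\eta)]=[\eta\cdot g]=0$ — wait, here I must be careful: $F(g,X)$ is the contravariant action in the \emph{second} argument of a sequence? Let me restate: $F(g,X)$ acts as pullback along $g$ in the first variable, so $F(g,X)[\eta]=[\eta\cdot g]$ for $\eta\in\E_{\A}(C,X)$, and the hypothesis is $[\eta\cdot g]=[\eps_{B,X}]$, i.e.\ $\eta\cdot g$ is split. Using the long exact $\Hom$–$\Ext$ sequence of \autoref{teo-ext-seq}\,$(b)$ for $\eps$ (contravariant version, with $\delta_X^{\eps}$), the kernel of $[\eta]\mapsto[\eta\cdot g] = \Ext^1_{\A}(g,X)[\eta]$ is exactly the image of $\delta_X^{\eps}$; since $[\eta]\in F(C,X)\subseteq \Ext^1_{\A}(C,X)$, there is $h\in\hom{\A}{A}{X}$ with $[\eta]=\delta_X^{\eps}(h)=[h\cdot\eps]$, and this $h$ witnesses exactness at $F(C,X)$ \emph{inside} $F$. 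The only thing to check is that we may take the preimage to lie in $\hom{\A}{A}{X}$ rather than some larger group — but $\hom{\A}{A}{X}$ is the full domain of $\delta_X^{\eps}$, so there is nothing to restrict.

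The main obstacle, then, is purely bookkeeping: making sure the already-proved exactness of \autoref{prop-subf-seq} (which gives the left half) meshes correctly with the closedness hypothesis (which gives exactness at $F(B,X)$) and that exactness at $F(C,X)$ is deduced from the \emph{ambient} $\Ext$ long exact sequence of \autoref{teo-ext-seq}\,$(b)$ together with the fact that $F(C,X)$ is a \emph{subgroup} of $\Ext^1_{\A}(C,X)$ containing the relevant image. There is no genuinely hard diagram chase here — every exactness assertion either is one of \autoref{prop-subf-seq}, \autoref{teo-ext-seq}\,$(b)$, or the defining half-exactness of closedness on the left — so I would write the proof as: (1) cite \autoref{prop-subf-seq} for the segment ending at $F(C,X)$; (2) for exactness at $F(C,X)$, compute $F(g,X)\circ\delta_X^{\eps}=0$ via \autoref{prop-propiedades-pullback-pushout}\,$(c)$–$(d)$, and for the reverse inclusion invoke exactness of the $\Ext$ long exact sequence of $\eps$ at $\Ext^1_{\A}(C,X)$ restricted to the subgroup $F(C,X)$; (3) for exactness at $F(B,X)$, quote the hypothesis that $F$ is closed on the left applied to $\eps$; (4) conclude, and note the dual statement for $F$ closed on the right follows by passing to $\A^{\op}$.
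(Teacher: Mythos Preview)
Your proposal is correct and follows essentially the same route as the paper: cite \autoref{prop-subf-seq} for the $\hom{\A}{-}{X}$ segment, use the closedness hypothesis for exactness at $F(B,X)$, and obtain exactness at $F(C,X)$ by comparing with the ambient $\Ext$ long exact sequence of \autoref{teo-ext-seq}\,$(b)$. The paper packages your step~(2) more compactly as the single chain $\Img{\delta_X^{\eps}}\subseteq \Ker{F(g,X)}\subseteq \Ker{\ext{\A}{g}{X}}=\Img{\delta_X^{\eps}}$, thereby getting both inclusions at once rather than verifying $F(g,X)\circ\delta_X^{\eps}=0$ by an explicit splitting computation, but this is a presentational difference only.
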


\begin{proof}
We only prove the case when $F$ is closed on the left since the case when is closed on the right follows by duality. Indeed, from \autoref{prop-subf-seq} and the fact that $F(-,X)$ is half-exact over $\E_F$, the only thing we have to show is the exactness of the sequence $$\hom{\A}{A}{X} \overset{\delta_{X}^{\eps}}{\longrightarrow} F(C,X)\overset{F(g,X)}{\longrightarrow} F(B,X).$$ Indeed, given that $\eps\in\E_F$ and $\E_F$ is closed under pullbacks, by \autoref{prop-subf-seq} the images of the contravariant connecting morphism $\delta^{\eps}_{X}$ lies in $F(C,X)$ and therefore, from the exactness of the sequence in \autoref{teo-ext-seq}\,$(b)$, we get the relations $$\operatorname{Im}(\delta^{\eps}_{X})\subseteq \operatorname{Ker}(F(g,X))\subseteq \operatorname{Ker}(\ext{\A}{g}{X})= \operatorname{Im}(\delta^{\eps}_{X}).$$
\end{proof}

We are now in position to state the promised converse of \autoref{prop-E1oE2-aditivo} which is contained in \citep[Theorem 1.1]{butler_classes_1961}.

\begin{teo}\label{teo-caract-cerr-hf}
For a proper subfunctor $F$ the following holds true:
\begin{itemize}
\item[$(a)$] $F$ is closed on the right if, and only if, $\mathcal{M}_F$ satisfies \textup{(E)}.
\item[$(b)$] $F$ is closed on the left if, and only if, $\mathcal{M}_F$ satisfies \textup{(E$^{\ast}$)}.
\end{itemize}
Therefore $F$ is closed if, and only if, $\mathcal{M}_F$ is an h.f.\,class.
\end{teo}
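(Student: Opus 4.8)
The plan is to prove the two equivalences $(a)$ and $(b)$ separately, and then observe that the final claim follows immediately: $\mathcal{M}_F$ is an h.f.\,class precisely when it satisfies both (E) and (E$^{\ast}$) in addition to the f.\,class axioms (A)--(D$^{\ast}$), which it always does by \autoref{prop-subf-fclase}. By \autoref{obs-dual-fclass} the statement $(b)$ is dual to $(a)$ — passing to $\A^{\op}$ swaps closed-on-the-right with closed-on-the-left and swaps (E) with (E$^{\ast}$) — so it suffices to prove $(a)$ in full.

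For the forward direction of $(a)$, assume $F$ is closed on the right. I want to show $\mathcal{M}_F$ satisfies (E), i.e.\ if $f\colon A\to B$ and $g\colon B\to C$ are monomorphisms in $\mathcal{M}_F$ then $gf\in\mathcal{M}_F$. The strategy is to produce $F$-exact sequences witnessing membership. Since $f,g$ are monic, consider the cokernel sequences $\eta_f\colon 0\to A\xrightarrow{f} B\to \Coker{f}\to 0$ and $\eta_g\colon 0\to B\xrightarrow{g} C\to \Coker{g}\to 0$, both of which are $F$-exact (by the argument in the proof of \autoref{prop-subf-fclase}, $f\in\mathcal{M}_F$ monic forces its cokernel sequence to be $F$-exact, up to isomorphism). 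There is a standard short exact sequence relating the cokernels — the Ker-Coker sequence / snake-type diagram — namely $0\to \Coker{f}\to \Coker{gf}\to \Coker{g}\to 0$. The key point is that closedness on the right of $F$ at the object $X=\Coker{f}$ applied to the $F$-exact sequence $\eta_g$ gives exactness of $F(\Coker{g},\Coker{f})\to\cdots$, but more usefully, I will use that the half-exactness lets me lift the class of the relevant extension: using the long exact sequence from \autoref{prop-subf-seq} (with the contravariant connecting morphism) together with half-exactness of $F(-,X)$ on $\E_F$, I can show the pullback/pushout manipulations keep everything inside $\E_F$, so that $0\to \Coker{f}\to\Coker{gf}\to\Coker{g}\to 0$ is $F$-exact, hence $0\to A\to C\to \Coker{gf}\to 0$ is $F$-exact (as an extension built from $F$-exact pieces via the closedness), giving $gf\in\mathcal{M}_F$ by (M1).

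For the converse direction of $(a)$, assume $\mathcal{M}_F$ satisfies (E); I want $F$ closed on the right, i.e.\ for each $C\in\A$ and each $F$-exact sequence $\eps\colon 0\to X\xrightarrow{u} Y\xrightarrow{v} Z\to 0$, the sequence $F(C,X)\to F(C,Y)\to F(C,Z)$ is exact at $F(C,Y)$. One inclusion (composition is zero / image inside kernel) is formal from functoriality and \autoref{prop-propiedades-pullback-pushout}. For the other inclusion, take $[\xi]\in F(C,Y)$ mapping to zero in $F(C,Z)$, say $\xi\colon 0\to Y\xrightarrow{a} E\xrightarrow{b} C\to 0$; then $[v\cdot\xi]=0$ means the pushout of $\xi$ along $v$ splits. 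Forming the pushout diagram of $\xi$ along $v\colon Y\to Z$ and analyzing it, splitting gives a section which, combined with the monomorphism $a$ and the original $F$-exactness of $\eps$, lets me build a commutative diagram exhibiting $[\xi]$ as coming from some $[\zeta]\in \ext{\A}{C}{X}$; I must then check $\zeta$ is actually $F$-exact. This is exactly where (E) enters: the monomorphism in $\zeta$ will be a composite $X\xrightarrow{u} Y\xrightarrow{a} E$ of the monomorphism $u$ (in $\mathcal{M}_F$ because $\eps\in\E_F$) with $a$ (in $\mathcal{M}_F$ because $\xi\in\E_F$), and (E) gives $au\in\mathcal{M}_F$, so the sequence $0\to X\to E\to (\text{cokernel})\to 0$ is $F$-exact; identifying this cokernel with $C$ finishes the argument.

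The main obstacle I anticipate is the converse direction — specifically, the diagram chase that extracts the preimage $[\zeta]\in F(C,X)$ of a given $[\xi]\in F(C,Y)$ killed in $F(C,Z)$, and the careful bookkeeping needed to see that the relevant monomorphism is genuinely the composite to which (E) applies. Getting the pushout-splits-implies-factors-through argument right without invoking the Freyd--Mitchell embedding requires working purely with universal properties of pullbacks/pushouts and the properties catalogued in \autoref{prop-propiedades-pullback-pushout} and \autoref{teo-ext-seq}; this is delicate but elementary. The forward direction, by contrast, is mostly a matter of organizing known $F$-exact sequences and invoking half-exactness, so I expect it to be more routine. Once $(a)$ is established, $(b)$ follows by the $\A\leadsto\A^{\op}$ duality of \autoref{obs-dual-fclass}, and the h.f.\,class characterization is then immediate.
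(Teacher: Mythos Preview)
Your plan has the right overall shape, and in the direction (E) $\Rightarrow$ closed-on-the-right you correctly locate the composite $au$ as the place where (E) enters. But there is a concrete error: you claim the preimage $\zeta$ of $[\xi]$ has monomorphism $au\colon X\to E$ and that ``identifying this cokernel with $C$ finishes the argument.'' That identification fails. The snake lemma applied to $u$ and $au$ yields an exact sequence $0\to Z\to\Coker{au}\to C\to 0$, and this sequence is precisely $v\cdot\xi$; so even when $v\cdot\xi$ splits one only obtains $\Coker{au}\cong Z\oplus C$, not $C$. The paper instead uses \autoref{teo-ext-seq}$(b)$ to write $[\xi]=[u\cdot\lambda]$ for some $\lambda\in\E_{\A}(C,X)$ whose middle term is \emph{not} $E$; in the pushout diagram one has $au=wh$, where $h$ is the monomorphism of $\lambda$ and $w$ the pushout map. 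Property (E) gives $au\in\mathcal{M}_F$, but then property (D) --- which you never invoke --- is still needed to deduce $h\in\mathcal{M}_F$ from $wh\in\mathcal{M}_F$.

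Your forward direction (closed-on-the-right $\Rightarrow$ (E)) is too vague, and the one explicit step you propose is unjustified. The cokernel sequence $0\to\Coker{f}\to\Coker{gf}\to\Coker{g}\to 0$ is indeed $F$-exact --- it is the pushout of $\eta_g$ along $c_f$, requiring no closedness at all --- but there is no principle ``an extension built from $F$-exact pieces is $F$-exact'' that then lets you conclude $0\to A\xrightarrow{gf} C\to\Coker{gf}\to 0$ is $F$-exact; that would amount to assuming (E). You also appeal to ``half-exactness of $F(-,X)$'', but closedness on the right only gives half-exactness of $F(C,-)$. The paper's argument is more delicate: setting $h=gf$ and $\eps_h$ its cokernel sequence, one checks that $[f\cdot\eps_h]\in F(\Coker{h},B)$ lies in $\Ker{F(\Coker{h},c_f)}$; half-exactness of $F(\Coker{h},-)$ on $\eta_f$ produces $\eta\in\E_F$ with $[f\cdot\eps_h]=[f\cdot\eta]$; the connecting morphism for the full $\Ext$ then gives $[\eps_h]-[\eta]=[\eta_f\cdot t]\in F(\Coker{h},A)$; and one concludes $[\eps_h]\in F(\Coker{h},A)$ using that $F(\Coker{h},A)$ is a subgroup, which closedness on the right guarantees.
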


\begin{proof}
We only prove ($a$) because ($b$) follows by duality.

($\Leftarrow$)\ We show that $F$ is closed on the right provided $\mathcal{M}_F$ satisfies (E). Given an $F$-exact sequence $$0\longrightarrow A\overset{f}{\longrightarrow} B \overset{g}{\longrightarrow} C\longrightarrow 0\,,$$ we have to see that the induced sequence $$F(X,A)\overset{F(X,f)}{\longrightarrow} F(X,B)\overset{F(X,g)}{\longrightarrow} F(X,C)$$ is exact in $\ab$, for all $X\in\A$. By \autoref{prop-E1oE2-aditivo}, $F$ is additive and so we only have to proof $\Ker{F(X,g)}\subseteq \Img{F(X,f)}$.

Let $[\eta]\in\Ker{F(X,g)}$. Since $\Ker{F(X,g)}\subseteq \Ker{\ext{\A}{X}{g}}=\Img{\ext{\A}{X}{f}}$ by \autoref{teo-ext-seq}, then $[\eta]=[f\cdot \lambda]$ for some $\lambda\in\E_{\A}$ and hence there is a commutative diagram with exact rows $$\begin{tikzcd}
	{\phantom{\eta}\lambda\colon\ \ \ 0} & A & Z & X & {0\,\phantom{.}} \\
	{\phantom{\lambda}\eta\colon\ \ \ 0} & B & W & X & {0\,.}
	\arrow[from=1-1, to=1-2]
	\arrow["h", from=1-2, to=1-3]
	\arrow[from=1-3, to=1-4]
	\arrow[from=1-4, to=1-5]
	\arrow["u"', from=2-2, to=2-3]
	\arrow[from=2-3, to=2-4]
	\arrow[from=2-4, to=2-5]
	\arrow[Rightarrow, no head, from=1-4, to=2-4]
	\arrow["w", from=1-3, to=2-3]
	\arrow["f"', from=1-2, to=2-2]
	\arrow[from=2-1, to=2-2]
	\arrow["\textsc{po}"{anchor=center}, draw=none, from=1-2, to=2-3]
\end{tikzcd}$$ Given that $u,f\in\mathcal{M}_F$ are monomorphisms, by (E) we get $uf=wh\in\mathcal{M}_F$, and since $wh$ and $h$ are monomorphisms, also $h\in\mathcal{M}_F$ by (D). Hence $\lambda\in\E_{F_{\mathcal{M}_F}}$. But from \autoref{teo-caract-subfpropio-fclases}, $\E_{F_{\mathcal{M}_F}}=\E_F$ because $F$ is proper and so $[\eta]=F(X,f)([\lambda])\in\Img{F(X,f)}$.

($\Rightarrow$)\ We will show that $\mathcal{M}_F$ satisfies (E) provided $F$ is closed on the right. Let $f\colon A\to B$ and $g\colon B\to C$ be monomorphisms in $\mathcal{M}_F$. Let us show that $h\coloneqq gf\in\mathcal{M}_F$. By an application of the Snake Lemma there is a commutative diagram with exact rows and columns \begin{equation}\label{eq-diag-h=gf}
\begin{tikzcd}
	& 0 & 0 & 0 \\
	0 & A & A & 0 & 0 \\
	0 & B & C & {\Coker{g}} & 0 \\
	0 & {\Coker{f}} & {\Coker{h}} & {\Coker{g}} & 0 \\
	& 0 & 0 & 0
	\arrow[from=2-1, to=2-2]
	\arrow[Rightarrow, no head, from=2-2, to=2-3]
	\arrow[from=2-3, to=2-4]
	\arrow[from=2-4, to=2-5]
	\arrow["g"', from=3-2, to=3-3]
	\arrow["{c_g}"', from=3-3, to=3-4]
	\arrow[from=3-4, to=3-5]
	\arrow[from=2-4, to=3-4]
	\arrow["h", from=2-3, to=3-3]
	\arrow["f"', from=2-2, to=3-2]
	\arrow[from=3-1, to=3-2]
	\arrow[from=1-2, to=2-2]
	\arrow["{c_f}"', from=3-2, to=4-2]
	\arrow[from=4-2, to=5-2]
	\arrow[from=1-3, to=2-3]
	\arrow["{c_h}", from=3-3, to=4-3]
	\arrow[from=4-3, to=5-3]
	\arrow[from=1-4, to=2-4]
	\arrow[Rightarrow, no head, from=3-4, to=4-4]
	\arrow[from=4-1, to=4-2]
	\arrow["m"', from=4-2, to=4-3]
	\arrow["n"', from=4-3, to=4-4]
	\arrow[from=4-4, to=4-5]
	\arrow[from=4-4, to=5-4]
\end{tikzcd}
\end{equation}
Therefore, we deduce the following commutative diagram with exact rows: $$\begin{tikzcd}
	{\phantom{_{hg}} \eps_f\colon\ \ \ 0} & A & B & {\Coker{f}} & 0 \\
	{\phantom{_{fg}} \eps_h\colon\ \ \ 0} & A & C & {\Coker{h}} & 0 \\
	{\phantom{_{hf}} \eps_g\colon\ \ \ 0} & B & C & {\Coker{g}} & 0
	\arrow[from=2-1, to=2-2]
	\arrow["h"', from=2-2, to=2-3]
	\arrow["{c_h}"', from=2-3, to=2-4]
	\arrow[from=2-4, to=2-5]
	\arrow["g"', from=3-2, to=3-3]
	\arrow["{c_g}"', from=3-3, to=3-4]
	\arrow[from=3-4, to=3-5]
	\arrow["n", from=2-4, to=3-4]
	\arrow[Rightarrow, no head, from=2-3, to=3-3]
	\arrow["f"', from=2-2, to=3-2]
	\arrow[from=3-1, to=3-2]
	\arrow[Rightarrow, no head, from=1-2, to=2-2]
	\arrow[from=1-1, to=1-2]
	\arrow["f", from=1-2, to=1-3]
	\arrow["{c_f}", from=1-3, to=1-4]
	\arrow[from=1-4, to=1-5]
	\arrow["m", from=1-4, to=2-4]
	\arrow["g", from=1-3, to=2-3]
\end{tikzcd}$$ In particular, from \autoref{prop-propiedades-pullback-pushout}, we have $[f\cdot\eps_h]=[\eps_g\cdot n]$ and given that $\eps_g\in\E_F$ and $\E_F$ is closed under pullbacks, then $f\cdot\eps_h\in\E_F$. Moreover, $0=[0\cdot \eps_h]=[(c_f f)\cdot \eps_h]=[c_f\cdot(f\cdot\eps_h)]$, so $[f\cdot \eps_h]\in\Ker{F(\Coker{h},c_f}$. Since $\eps_f\in\E_F$ and $F$ is closed on the right, from the exactness of the induced sequence $$F(\Coker{h}, A)\longrightarrow F(\Coker{h}, B)\longrightarrow F(\Coker{h}, \Coker{f})\,,$$ we have $\Ker{F(\Coker{h},c_f)}=\Img{F(\Coker{h},f)}$. Hence there exists $\eta\in\E_F$ such that $[f\cdot\eps_h]=[f\cdot\eta]$ and thus $[\eps_h]-[\eta]\in \Ker{\ext{\A}{\Coker{h}}{f}}=\operatorname{Im}\left(\partial_{\Coker{h}}^{\eps_f}\right)$. That is, there exists $t\colon \Coker{h}\to \Coker{f}$ such that $[\eps_h]-[\eta]=[\eps_f\cdot t]$. Given that $\E_F$ is closed under pullbacks, this means that $\eps_f\cdot t\in\E_F$ and since $F(\Coker{h},A)\in\ab$, then $[\eps_h]=[\eps_f\cdot t]+[\eta]\in F(\Coker{h}, A)$. Therefore, $h\in\mathcal{M}_F$.
\end{proof}

From the previous result we see that closed subfunctors correspond to h.f.\,classes via the bijection of \autoref{teo-caract-subfpropio-fclases} between proper subfunctors and f.\,classes.

\begin{coro}
The bijection from \autoref{teo-caract-subfpropio-fclases} restricts to a bijection between the following:
\begin{itemize}
\item[($a$)] Closed subfunctors $F$ of $\ext{\A}{-}{-}\colon\A^{\op}\times\A\to\ab$.
\item[($b$)] h.f.\,classes $\mathcal{M}\subseteq \operatorname{Mor}(\A)$.
\end{itemize}\hfill $\qed$
\end{coro}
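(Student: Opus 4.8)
The plan is to derive this entirely formally from the machinery already in place, using \autoref{teo-caract-subfpropio-fclases} together with the characterization obtained in \autoref{teo-caract-cerr-hf}; no new diagram chase is needed. The key observation is that, by \autoref{def-fclass-hfclass}, an h.f.\,class is by definition nothing but an f.\,class that additionally satisfies \textup{(E)} and \textup{(E}$^{\ast}$\textup{)}. Thus the collection of h.f.\,classes is a subclass of the collection of f.\,classes, the latter being in bijection with the collection of proper subfunctors via $F\mapsto\mathcal{M}_F$ (with inverse $\mathcal{M}\mapsto F_{\mathcal{M}}$) by \autoref{teo-caract-subfpropio-fclases}.

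First I would note that every closed subfunctor is, by the defining convention, a proper subfunctor, so the closed subfunctors form a subclass of the domain of the bijection of \autoref{teo-caract-subfpropio-fclases}. Next I would invoke \autoref{teo-caract-cerr-hf}: for a proper subfunctor $F$, being closed on the right is equivalent to $\mathcal{M}_F$ satisfying \textup{(E)}, and being closed on the left is equivalent to $\mathcal{M}_F$ satisfying \textup{(E}$^{\ast}$\textup{)}; hence $F$ is closed if, and only if, $\mathcal{M}_F$ satisfies both, i.e.\ if, and only if, $\mathcal{M}_F$ is an h.f.\,class. This already shows that the bijection $F\mapsto\mathcal{M}_F$ sends closed subfunctors to h.f.\,classes and is injective on this subclass.

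To see that the restriction is onto the class of h.f.\,classes (equivalently, that the inverse $\mathcal{M}\mapsto F_{\mathcal{M}}$ also restricts correctly), I would take an arbitrary h.f.\,class $\mathcal{M}$. Since $\mathcal{M}$ is in particular an f.\,class, \autoref{prop-fclase-subf} gives that $F_{\mathcal{M}}$ is a proper subfunctor, and \autoref{teo-caract-subfpropio-fclases} gives $\mathcal{M}_{F_{\mathcal{M}}}=\mathcal{M}$. As $\mathcal{M}_{F_{\mathcal{M}}}=\mathcal{M}$ is an h.f.\,class, \autoref{teo-caract-cerr-hf} yields that $F_{\mathcal{M}}$ is closed. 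Combining the two directions, the bijection of \autoref{teo-caract-subfpropio-fclases} restricts to a bijection between closed subfunctors and h.f.\,classes, which is the assertion.

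There is essentially no obstacle here: the whole content of the corollary is packaged inside \autoref{teo-caract-cerr-hf}, and the proof is a one-line bookkeeping argument. The only points deserving a moment's care are that \emph{closed} already entails \emph{proper} (so that \autoref{teo-caract-cerr-hf} is applicable) and that \emph{h.f.\,class} means precisely \emph{f.\,class} plus \textup{(E)} and \textup{(E}$^{\ast}$\textup{)}; once these are noted, the $\qed$ in the statement is justified.
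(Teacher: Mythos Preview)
Your proposal is correct and matches the paper's approach exactly: the paper marks the corollary with a bare $\qed$, noting just before it that closed subfunctors correspond to h.f.\,classes via the bijection of \autoref{teo-caract-subfpropio-fclases}, which is precisely the bookkeeping you spell out using \autoref{teo-caract-cerr-hf}. Your care in checking that closed implies proper and that h.f.\,class means f.\,class plus \textup{(E)} and \textup{(E}$^{\ast}$\textup{)} is well placed and completes the argument.
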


\subsection{The $3\times 3$-lemma property for subfunctors.}

In what follows, we prove the result of Buan \citep{buan_closed_2001} which states that closed subfunctors are precisely those satisfying the $3 \times 3$-lemma property. We then show how this can be used to easily deduce the closedness of certain subfunctors, using the $3 \times 3$-lemma for abelian categories.

\begin{defi}
We say that a proper subfunctor $F$ \textit{has the $3\times 3$-lemma property} if given a commutative diagram with exact rows and columns $$\begin{tikzcd}
	& 0 & 0 & 0 \\
	0 & {A} & {B} & {C} & 0 \\
	0 & {D} & {E} & {G} & 0 \\
	0 & {H} & {I} & {J} & 0 \\
	& 0 & 0 & 0
	\arrow[from=2-1, to=2-2]
	\arrow["a", from=2-2, to=2-3]
	\arrow["b", from=2-3, to=2-4]
	\arrow[from=2-4, to=2-5]
	\arrow[from=3-1, to=3-2]
	\arrow["d", from=3-2, to=3-3]
	\arrow["e", from=3-3, to=3-4]
	\arrow[from=3-4, to=3-5]
	\arrow[from=4-1, to=4-2]
	\arrow["g", from=4-2, to=4-3]
	\arrow["h", from=4-3, to=4-4]
	\arrow[from=4-4, to=4-5]
	\arrow[from=1-2, to=2-2]
	\arrow["i"', from=2-2, to=3-2]
	\arrow["k"', from=3-2, to=4-2]
	\arrow[from=4-2, to=5-2]
	\arrow[from=1-3, to=2-3]
	\arrow["j"', from=2-3, to=3-3]
	\arrow["l"', from=3-3, to=4-3]
	\arrow[from=4-3, to=5-3]
	\arrow[from=1-4, to=2-4]
	\arrow["c", from=2-4, to=3-4]
	\arrow["f", from=3-4, to=4-4]
	\arrow[from=4-4, to=5-4]
\end{tikzcd}$$ such that the first and third rows and all columns are $F$-exact, then also the second row is $F$-exact.
\end{defi}

The following preliminary result will be used in the proof of \autoref{teo-principal}.

\begin{lema}\label{lema-popb-3x3}
Let $\mathcal{M}\subseteq\operatorname{Mor}(\A)$ be a class of morphisms.
\begin{itemize}
\item[($a)$] Let $(f,g,1_C)\colon \eps\to \eta$ be a morphism, that is, a commutative diagram with exact rows $$\begin{tikzcd}
	{\phantom{\eta}\eps\colon\ \ \ 0} & A & B & C & 0\,\phantom{.} \\
	{\phantom{\eps}\eta\colon\ \ \ 0} & D & E & C & 0\,.
	\arrow[from=1-1, to=1-2]
	\arrow["a", from=1-2, to=1-3]
	\arrow[from=2-1, to=2-2]
	\arrow["d"', from=2-2, to=2-3]
	\arrow["e"', from=2-3, to=2-4]
	\arrow[from=2-4, to=2-5]
	\arrow["b", from=1-3, to=1-4]
	\arrow[from=1-4, to=1-5]
	\arrow[Rightarrow, no head, from=1-4, to=2-4]
	\arrow["g"', from=1-3, to=2-3]
	\arrow["f"', from=1-2, to=2-2]
\end{tikzcd}$$ If $\mathcal{M}$ is an f.class and $f\in\mathcal{M}$ is a monomorphism, then $g\in\mathcal{M}$.
\item[$(b)$] Let $(1_A,g,h)\colon \eps\to \eta$ be a morphism, that is, a commutative diagram with exact rows $$\begin{tikzcd}
	{\phantom{\eta}\eps\colon\ \ \ 0} & A & B & C & 0\,\phantom{.} \\
	{\phantom{\eps}\eta\colon\ \ \ 0} & A & D & E & 0\,.
	\arrow[from=1-1, to=1-2]
	\arrow["a", from=1-2, to=1-3]
	\arrow[from=2-1, to=2-2]
	\arrow["d"', from=2-2, to=2-3]
	\arrow["e"', from=2-3, to=2-4]
	\arrow[from=2-4, to=2-5]
	\arrow["b", from=1-3, to=1-4]
	\arrow[from=1-4, to=1-5]
	\arrow[Rightarrow, no head, from=1-2, to=2-2]
	\arrow["g", from=1-3, to=2-3]
	\arrow["h", from=1-4, to=2-4]
\end{tikzcd}$$ If $\mathcal{M}$ is an h.f.class, $e,h\in\mathcal{M}$ and $h$ is a monomorphism, then $g\in\mathcal{M}$.
\end{itemize}
\end{lema}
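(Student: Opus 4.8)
For part $(a)$, the plan is to analyze the kernel and cokernel of $g$ directly, using the f.\,class axioms. Since both rows are exact and the right square commutes with $1_C$ on the right, a diagram chase (or the Snake Lemma applied to the morphism $(f,g,1_C)$ of short exact sequences) shows that $\Ker{g}\cong\Ker{f}$ and $\Coker{g}\cong\Coker{f}$; more precisely, the induced maps on kernels and cokernels fit into the long exact sequence $0\to\Ker{f}\to\Ker{g}\to 0\to\Coker{f}\to\Coker{g}\to 0$. Hence $k_g=ik_f$ for the canonical $i\colon\Ker{f}\to A$ composed with $a$, up to the identification, and similarly $c_g$ factors through $c_f$ via an isomorphism. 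Since $f\in\mathcal M$ and $\mathcal M$ is an f.\,class, property (C) gives $k_f,c_f\in\mathcal M$; then property (B) (invariance under pre/post-composition with isomorphisms) yields $k_g,c_g\in\mathcal M$, and applying (C) again gives $g\in\mathcal M$. I would invoke \autoref{lema-isos-ker.coker} to make the isomorphism identifications precise. Alternatively — and probably more cleanly — one observes that $\eta$ is the pushout $f\cdot\eps$ (the right square with identity on $C$ forces this, by \autoref{prop-propiedades-pullback-pushout}), so one is reduced to showing that $\mathcal M$ is closed under the pushout map on monomorphisms; but since the statement is purely about the class $\mathcal M$ and not about a subfunctor, the kernel–cokernel argument is the honest route.

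For part $(b)$, the situation is dual in flavour but uses the h.f.\,class axiom (E$^\ast$) essentially. Here the left square is the identity on $A$, so $\eta$ is the pushout-free situation: $b$ factors as $b = (\text{something})$, and in fact $(1_A,g,h)$ exhibits $\eta$ as obtained from $\eps$ by a morphism that is the identity on the sub-object. The key structural fact, obtained again by the Snake Lemma applied to the vertical comparison, is that $g$ sits in a short exact sequence $0\to A\to \bullet$ and that $\Coker{g}$ is related to $\Coker{h}$: chasing the diagram gives that $h$ restricted appropriately identifies $C/\!\!\operatorname{im}$ data so that the cokernel of $g$ is isomorphic to the cokernel of $h$, while $\Ker{g}$ is trivial (since $d$ is monic, as $\eta$ is a short exact sequence). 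Wait — more carefully: $g$ need not be monic a priori, but from $d g = $ (the map $A\to D$) being monic and the rows exact, one gets $\Ker{g}\hookrightarrow\Ker{(\text{induced }C\to E)}$; the induced map $C\to E$ is a comparison between $b$ and the composite, and the Snake Lemma identifies $\Ker{g}$ with a subobject of $\Ker{(C\xrightarrow{} E)}$ and $\Coker{g}$ with $\Coker{(C\to E)}$, while the comparison map $C\to E$ is precisely $h$ up to the exact structure. So $\Ker{g}\cong\Ker{h'}$ and $\Coker{g}\cong\Coker{h'}$ where $h'$ is a map with the same kernel and cokernel as $h$ — in fact one arranges $h'=h$ via \autoref{lema-isos-ker.coker}. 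Then $k_h,c_h\in\mathcal M$ by (C) applied to $h\in\mathcal M$, and since $h$ is a monomorphism $k_h=0\in\mathcal M$ by (A); transporting along the isomorphisms via (B) gives $k_g,c_g\in\mathcal M$, whence $g\in\mathcal M$ by (C). The role of $e\in\mathcal M$ in the hypothesis is to feed axiom (E$^\ast$) when one needs to compose the epimorphism $b$ with $e$ (or rather to identify $\Coker{g}$ via an epimorphism that is a composite of two $\mathcal M$-epimorphisms); I expect that in the bookkeeping $c_g$ appears naturally as a composite $c_g = e' \circ c_h'$ of two epimorphisms in $\mathcal M$, and (E$^\ast$) is what promotes this composite into $\mathcal M$ — this is the one place the full h.f.\,structure, rather than just the f.\,structure, is used.

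The main obstacle I anticipate is the bookkeeping in part $(b)$: keeping track of exactly which comparison map plays the role of $h$, and verifying that the cokernel (or kernel) of $g$ is obtained from $\mathcal M$-morphisms by a composition whose factors are separately in $\mathcal M$, so that (E$^\ast$) applies and not merely (C). The cleanest way to organize this is to complete the given $2\times 3$ diagram to a full $3\times 3$ diagram by taking cokernels of the three vertical-ish maps (or kernels), apply the $3\times 3$-lemma / Snake Lemma in $\A$ to get exactness of the new row and column, and then read off the factorizations of $k_g$ and $c_g$ from that enlarged diagram — at which point the f.\,/ h.f.\,axioms apply mechanically. Part $(a)$ should be routine by comparison, essentially a one-line consequence of (C) and (B) once the kernel/cokernel identification is in place.
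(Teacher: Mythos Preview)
Your plan for part $(a)$ has a genuine gap. You correctly note that the Snake Lemma yields an isomorphism $t\colon\Coker{f}\to\Coker{g}$, but you then claim that property (B) transports $c_f\in\mathcal{M}$ to $c_g\in\mathcal{M}$. This does not work: the cokernel maps $c_f\colon D\to\Coker{f}$ and $c_g\colon E\to\Coker{g}$ have \emph{different sources}, related only by $d\colon D\to E$, which is not an isomorphism. The relation the Snake Lemma actually gives is $c_g d = t c_f$, and (B) cannot bridge the non-invertible $d$; your appeal to \autoref{lema-isos-ker.coker} does not help either, since that lemma also requires the comparison map to be an isomorphism. The missing ingredient is precisely axiom (D$^\ast$), which you never invoke: from $c_f\in\mathcal{M}$ (by (C)) one gets $tc_f = c_g d\in\mathcal{M}$ (by (B)); since $c_g$ and $c_g d$ are both epimorphisms, (D$^\ast$) now yields $c_g\in\mathcal{M}$. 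Then $g$, being a monomorphism (it is parallel to the monomorphism $f$ in a pushout square), satisfies $g=k_{c_g}\in\mathcal{M}$ by (C).

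For part $(b)$ your final intuition is correct, but the path to it is needlessly tangled and the earlier attempt via (B) alone would fail for the same reason as in $(a)$. The clean argument mirrors the above: the Snake Lemma gives an isomorphism $u\colon\Coker{g}\to\Coker{h}$ with $u c_g = c_h e$. Here $c_h\in\mathcal{M}$ by (C), and $e\in\mathcal{M}$ by hypothesis; both are epimorphisms, so (E$^\ast$) gives $c_h e\in\mathcal{M}$, whence $c_g = u^{-1}(c_h e)\in\mathcal{M}$ by (B). Since $g$ is a monomorphism (parallel to the monomorphism $h$ in a pullback square), $g=k_{c_g}\in\mathcal{M}$ by (C). Note that (B) genuinely applies here because the source $E$ is the same for $c_g$ and for $c_h e$; this is exactly what failed in your treatment of $(a)$. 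No $3\times 3$ completion is needed.
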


\begin{proof}
($a$)\ By an application of the Snake Lemma, there exists an isomorphism $t\colon \operatorname{Coker}(f)\to\operatorname{Coker}(g)$ such that $c_gd=tc_f$. Since $\mathcal{M}$ is an f.\,class and $f\in\mathcal{M}$ is a monomorphism, then $tc_f\in\mathcal{M}$ by (C) and (B), and thus $c_g\in\mathcal{M}$ by (D$^{\ast}$). Finally observe that $g$, being parallel to a monomorphism in a pushout diagram, is also a monomorphism and so, $g=k_{c_g}\in\mathcal{M}$ by (C).

($b$)\ By an application of the Snake Lemma, there exists an isomorphism $u\colon \operatorname{Coker}(g)\to\operatorname{Coker}(h)$ such that $uc_g=c_he$. Since $\mathcal{M}$ is an h.f.\,class and $e,c_h\in\mathcal{M}$ are both epimorphisms, then $c_he\in\mathcal{M}$ by (E$^{\ast}$) and thus $c_g=u^{-1}c_he\in\mathcal{M}$ by (B). Finally observe that $g$, being parallel to a monomorphism in a pullback diagram, is also a monomorphism and so, $g=k_{c_g}\in\mathcal{M}$ by (C).
\end{proof}

\begin{obs}\label{obs-dual-lema-popb-3x3}
In the previous lemma we could state and prove the dual ($b^{\ast}$) of ($b$) and then ($b$) will follow by duality arguing in $\A^{\op}$, see \autoref{obs-dual-fclass}. In that case, we will instead apply property (E) for proving this.
\end{obs}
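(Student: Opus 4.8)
The plan is to spell out the dual statement ($b^{\ast}$), prove it using property (E), and then recover ($b$) by transporting it to the opposite category. I would formulate ($b^{\ast}$) as the assertion about a morphism $(f,g,1_C)\colon\eps\to\eta$ of short exact sequences $\eps\colon 0\to A\xrightarrow{a} B\to C\to 0$ and $\eta\colon 0\to D\to E\to C\to 0$, of the shape appearing in \autoref{lema-popb-3x3}($a$) with $f\colon A\to D$ and $g\colon B\to E$: if $\mc{M}$ is an h.f.\,class, $a,f\in\mc{M}$ and $f$ is an epimorphism, then $g\in\mc{M}$. This is precisely the arrow-by-arrow dual of ($b$): reversing every arrow turns the pullback square of ($b$) into a pushout square, interchanges the roles of kernels and cokernels, and replaces the hypothesis ``$h$ monomorphism'' by ``$f$ epimorphism''.

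For the proof of ($b^{\ast}$) I would mirror the argument already given for ($b$). Since the right-hand vertical map is $1_C$, \autoref{prop-propiedades-pullback-pushout} shows that $\eta$ is the pushout of $\eps$ along $f$, so $g$, being parallel to the epimorphism $f$, is itself an epimorphism; hence $c_g=0\in\mc{M}$ by (A). Applying the Snake Lemma to $(f,g,1_C)$ (now $\Ker{1_C}=0=\Coker{1_C}$) produces an isomorphism $s\colon\Ker{f}\to\Ker{g}$ with $k_g s=a\,k_f$. Because $f\in\mc{M}$ we get $k_f\in\mc{M}$ by (C), and $a,k_f$ are monomorphisms in $\mc{M}$, so $a\,k_f\in\mc{M}$ by (E); thus $k_g s\in\mc{M}$ and therefore $k_g\in\mc{M}$ by (B). Finally $g\in\mc{M}$ by (C), since both $k_g$ and $c_g$ lie in $\mc{M}$. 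Note that this argument invokes (E) exactly where the proof of ($b$) invoked (E$^{\ast}$), as the remark predicts.

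To deduce ($b$) I would apply ($b^{\ast}$) in the opposite category. Given the data of ($b$) for $\A$ and an h.f.\,class $\mc{M}$, pass to $\A^{\op}$: the morphism $(1_A,g,h)\colon\eps\to\eta$ becomes a morphism $(h^{\op},g^{\op},1_A)\colon\eta^{\op}\to\eps^{\op}$ of short exact sequences in $\A^{\op}$ whose right-hand vertical arrow is the identity, so it is exactly an instance of the hypothesis of ($b^{\ast}$). By \autoref{obs-dual-fclass} the class $\mc{M}^{\op}$ is again an h.f.\,class; the hypotheses ``$e,h\in\mc{M}$ and $h$ monomorphism'' translate into ``$e^{\op},h^{\op}\in\mc{M}^{\op}$ and $h^{\op}$ epimorphism'', and property (E) for $\mc{M}^{\op}$ corresponds to (E$^{\ast}$) for $\mc{M}$. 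Hence ($b^{\ast}$) applied to $(\A^{\op},\mc{M}^{\op})$ yields $g^{\op}\in\mc{M}^{\op}$, that is, $g\in\mc{M}$, which is the conclusion of ($b$). The only delicate point, and the step I would take the most care over, is the bookkeeping of the duality: checking that reversing arrows carries the pullback square of ($b$) to a pushout, that the monomorphism and epimorphism hypotheses swap as claimed, and that the Snake Lemma relation $k_g s=a\,k_f$ is the genuine dual of $u\,c_g=c_h e$, so that the appeal to (E) in place of (E$^{\ast}$) is fully justified.
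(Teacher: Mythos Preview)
Your elaboration is correct and faithfully carries out exactly what the remark asserts; since the paper states this observation without proof, there is nothing further to compare. Your formulation of $(b^{\ast})$, the proof via the Snake Lemma and property (E), and the duality argument recovering $(b)$ are all sound and match the intended content of the remark.
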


\begin{teo}\label{teo-principal}
The following conditions are equivalent for a proper subfunctor $F$:
\begin{itemize}
\item[$(a)$] $F$ is closed.
\item[$(b)$] $\mathcal{M}_F$ is an h.f.\,class.
\item[$(c)$] $F$ has the $3\times 3$--lemma property.
\end{itemize}
\end{teo}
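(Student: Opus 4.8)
The plan is to prove the cycle $(a)\Leftrightarrow(b)$, then $(b)\Rightarrow(c)$, and finally $(c)\Rightarrow(b)$ (or $(c)\Rightarrow(a)$), closing the loop. The equivalence $(a)\Leftrightarrow(b)$ is already at hand: it is precisely \autoref{teo-caract-cerr-hf}. So the real content is the equivalence of $(b)$ (equivalently $(a)$) with the $3\times 3$-lemma property $(c)$, and this is where the elementary work happens, replacing Buan's appeal to exact categories and the Freyd--Mitchell embedding.

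For $(b)\Rightarrow(c)$, I would start from a commutative $3\times 3$ diagram with exact rows and columns in which the first and third rows and all three columns are $F$-exact, and show the middle row is $F$-exact. By \autoref{prop-biyec-subf-clas} it suffices to exhibit the middle row as obtained from known $F$-exact sequences by pushouts/pullbacks, or equivalently (via \autoref{teo-caract-subfpropio-fclases}) to show that the two middle morphisms $a\colon A\to B$ and $b\colon B\to C$ lie in $\mathcal{M}_F$. The morphism $i\colon A\to D$ (first column) is in $\mathcal{M}_F$ and is monic, and $a$ fits into a morphism of short exact sequences over the identity of $C$ together with the bottom row's analogue; here I would invoke \autoref{lema-popb-3x3}$(a)$ — which is tailored exactly for this — to conclude $a\in\mathcal{M}_F$, after first identifying the relevant sub-square of the big diagram as a pushout (this follows from the $3\times 3$ configuration by a diagram chase, or by the Snake Lemma applied to the two left columns). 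Dually, using that $f\colon G\to J$ (third column) is an epimorphism in $\mathcal{M}_F$ and \autoref{lema-popb-3x3}$(b)$ together with its dual (\autoref{obs-dual-lema-popb-3x3}), one gets $b\in\mathcal{M}_F$. Then $a,b\in\mathcal{M}_F$ forces the middle row into $\E_F$.

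For the converse $(c)\Rightarrow(b)$, I would verify that $\mathcal{M}_F$ satisfies (E) — the missing h.f.\ axiom, since $\mathcal{M}_F$ is already an f.\ class by \autoref{prop-subf-fclase}. Take $f\colon A\to B$ and $g\colon B\to C$ monic in $\mathcal{M}_F$; we must show $gf\in\mathcal{M}_F$. As in the proof of \autoref{teo-caract-cerr-hf}, the Snake Lemma produces a $3\times 3$ diagram with first row $0\to A\xrightarrow{=}A\to 0\to 0$, second row $\eps_g\colon 0\to B\xrightarrow{g}C\to\Coker{g}\to 0$ (rewritten), and columns built from $f$, $gf$, and $1$; more precisely I would arrange a $3\times 3$ whose three columns and outer two rows are visibly $F$-exact (split sequences, $\eps_f$, $\eps_g$, and $0\to\Coker f\to\Coker{gf}\to\Coker g\to 0$ — the last being a pushout/pullback of $\eps_g$), so that the $3\times 3$-property hands us that the middle row, which contains $gf$, is $F$-exact, whence $gf\in\mathcal{M}_F$. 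Dually one gets (E$^{\ast}$), so $\mathcal{M}_F$ is an h.f.\ class. Finally, to close the cycle cleanly one notes $(b)\Leftrightarrow(a)$ is \autoref{teo-caract-cerr-hf}, completing the equivalences.

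The main obstacle I anticipate is the bookkeeping in $(b)\Rightarrow(c)$: one must carefully extract from the ambient $3\times 3$ diagram the precise pushout and pullback squares needed to feed \autoref{lema-popb-3x3}, and check that the hypotheses (which morphism is monic, which is in $\mathcal{M}_F$, that $e$ or $h$ type maps are epic/monic as required) are genuinely met — the Snake Lemma connecting map and the induced isomorphisms between cokernels are what make these squares pushouts/pullbacks, and getting the directions right is the delicate point. The $(c)\Rightarrow(b)$ direction is comparatively routine once the correct $3\times 3$ diagram is written down, since it is essentially the same Snake-Lemma construction already used in \autoref{teo-caract-cerr-hf}, now read in the opposite logical direction.
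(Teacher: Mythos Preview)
Your plan for $(a)\Leftrightarrow(b)$ and for $(c)\Rightarrow(b)$ is correct and matches the paper exactly (the latter is the Snake-Lemma diagram \eqref{eq-diag-h=gf} read in reverse). The gap is in $(b)\Rightarrow(c)$.

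You propose to show that the middle-row morphisms (which are $d,e$, not $a,b$ --- a labeling slip) lie in $\mathcal{M}_F$ by a direct application of \autoref{lema-popb-3x3}, after ``identifying the relevant sub-square of the big diagram as a pushout''. But no square of the ambient $3\times3$ diagram is a pushout or pullback: the morphism $(i,j,c)\colon\eps_1\to\eps_2$ between rows has third component $c\neq 1_C$, and the morphism $(a,d,g)\colon\eta_1\to\eta_2$ between columns has third component $g\neq 1_H$, so the identity-on-one-end hypothesis of \autoref{lema-popb-3x3} is never met inside the diagram. More tellingly, your sketch for obtaining $d\in\mathcal{M}_F$ uses only part $(a)$ of that lemma, which needs merely the f.\,class axioms; if that sufficed, every proper subfunctor would satisfy the $3\times3$ property and hence, by your own $(c)\Rightarrow(b)$, every f.\,class would be an h.f.\,class. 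So axiom (E) must enter somewhere in the argument for $d$, and in your outline it does not.

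The paper's route is substantially longer and does use (E) at the crucial moment. It factors $(i,j,c)$ through the pushout $i\cdot\eps_1$ to produce an intermediate object $M$ and a monomorphism $q\colon B\to M$ in $\mathcal{M}_F$ via \autoref{lema-popb-3x3}$(a)$; reorganizes $M$ into an $F$-exact sequence $\eta$ with $[\eta]=[\eta_2\cdot g]$; applies \autoref{lema-popb-3x3}$(b)$ to the resulting pullback to obtain $s\colon M\to E$ in $\mathcal{M}_F$; uses the universal property of that pullback to factor $d=su$ for some monomorphism $u\colon D\to M$; verifies $u\in\mathcal{M}_F$ by a further application of \autoref{lema-popb-3x3}$(a)$ (after a computation showing that the induced left-hand map equals $a$); and only then concludes $d=su\in\mathcal{M}_F$ by (E). The ``bookkeeping'' you anticipate is therefore the entire content of the implication, not a wrinkle to be smoothed out.
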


\begin{proof}
\autoref{teo-caract-cerr-hf} establishes the equivalence ($a$) $\Leftrightarrow$ ($b$). If $f,g\in\mathcal{M}_F$ are monomorphisms such that $h\coloneqq gf$ is defined, then as in the proof of \autoref{teo-caract-cerr-hf} we can form the commutative diagram (\ref{eq-diag-h=gf}), where all rows and the first and third columns are $F$-exact. Hence, if $F$ has the $3\times 3$--lemma property, then $h\in\mathcal{M}_F$ and so $\mathcal{M}_F$ satisfies (E). By a similar construction or by duality, we see that $\mathcal{M}_F$ satisfies (E$^{\ast}$). Since we already know from \autoref{prop-subf-fclase} that $\mathcal{M}_F$ is an f.\,class, the previous reasoning shows that ($c$) $\Rightarrow$ ($b$).

To see that ($b$) $\Rightarrow$ ($c$), let us suppose that $\mathcal{M}_F$ is an h.f.\,class and consider a commutative diagram \begin{equation}\label{eq-diag-3x3}
\begin{tikzcd}
	& 0 & 0 & 0 \\
	0 & {A} & {B} & {C} & 0 \\
	0 & {D} & {E} & {G} & 0 \\
	0 & {H} & {I} & {J} & 0 \\
	& 0 & 0 & 0
	\arrow[from=2-1, to=2-2]
	\arrow["a", from=2-2, to=2-3]
	\arrow["b", from=2-3, to=2-4]
	\arrow[from=2-4, to=2-5]
	\arrow[from=3-1, to=3-2]
	\arrow["d", from=3-2, to=3-3]
	\arrow["e", from=3-3, to=3-4]
	\arrow[from=3-4, to=3-5]
	\arrow[from=4-1, to=4-2]
	\arrow["g", from=4-2, to=4-3]
	\arrow["h", from=4-3, to=4-4]
	\arrow[from=4-4, to=4-5]
	\arrow[from=1-2, to=2-2]
	\arrow["i"', from=2-2, to=3-2]
	\arrow["k"', from=3-2, to=4-2]
	\arrow[from=4-2, to=5-2]
	\arrow[from=1-3, to=2-3]
	\arrow["j"', from=2-3, to=3-3]
	\arrow["l"', from=3-3, to=4-3]
	\arrow[from=4-3, to=5-3]
	\arrow[from=1-4, to=2-4]
	\arrow["c", from=2-4, to=3-4]
	\arrow["f", from=3-4, to=4-4]
	\arrow[from=4-4, to=5-4]
\end{tikzcd}\end{equation} such that the first and third row and all columns are $F$-exact. Let us denote by $\eps_1,\eps_2,\eps_3$ its rows and by $\eta_1,\eta_2,\eta_3$ its columns. We must show that $\eps_2$ is also an $F$-exact sequence. According to \autoref{lema-morf-sucex} it suffices to show that $d \in \mathcal{M}_F$ given that $\mathcal{M}_F$ is an h.f.\,class.

Since $(i,j,c)\colon \eps_1\to\eps_2$ is a morphism, then \autoref{prop-propiedades-pullback-pushout} provides a factorization $(i,j,c)=(1_D,o,c)(i,q,1_C)$ such that the following diagram commutes and has exact rows $$\begin{tikzcd}
	{\phantom{\varepsilon\varepsilon_2}\varepsilon_1\colon\ \ \ 0} & A & B & C & 0\,\phantom{.} \\
	{\phantom{\varepsilon_1\varepsilon_2}\varepsilon\colon\ \ \ 0} & D & M & C & 0\,\phantom{.} \\
	{\phantom{\varepsilon\varepsilon_1}\varepsilon_2\colon\ \ \ 0} & D & E & G & 0\,.
	\arrow[from=1-1, to=1-2]
	\arrow["a", from=1-2, to=1-3]
	\arrow["i"', from=1-2, to=2-2]
	\arrow["b", from=1-3, to=1-4]
	\arrow["q", from=1-3, to=2-3]
	\arrow[from=1-4, to=1-5]
	\arrow[equals, from=1-4, to=2-4]
	\arrow[from=2-1, to=2-2]
	\arrow["m", from=2-2, to=2-3]
	\arrow[equals, from=2-2, to=3-2]
	\arrow["n", from=2-3, to=2-4]
	\arrow["o", from=2-3, to=3-3]
	\arrow[from=2-4, to=2-5]
	\arrow["c", from=2-4, to=3-4]
	\arrow[from=3-1, to=3-2]
	\arrow["d"', from=3-2, to=3-3]
	\arrow["e"', from=3-3, to=3-4]
	\arrow[from=3-4, to=3-5]
\end{tikzcd}$$ Given that $i\in\mathcal{M}_F$ is a monomorphism, then $q\in\mathcal{M}_F$ by \autoref{lema-popb-3x3}\,($a$). Furthermore, an application of the Snake Lemma to the morphism $(i,q,1_C)\colon \eps_1\to \eps$ yields an isomorphism $t\colon \operatorname{Coker}(q)\to H$ such that the following diagram commutes
\begin{equation}\label{eq-diag-snake3x3}
\begin{tikzcd}
	{\eps_1\colon\ \ \ 0} & A & B & C & 0 \\
	{\phantom{{}_1}\eps\colon\ \ \ 0} & D & M & C & 0 \\
	& H & {\operatorname{Coker}(q)}
	\arrow[from=1-1, to=1-2]
	\arrow["a", from=1-2, to=1-3]
	\arrow[from=2-1, to=2-2]
	\arrow["m"', from=2-2, to=2-3]
	\arrow["n"', from=2-3, to=2-4]
	\arrow[from=2-4, to=2-5]
	\arrow["b", from=1-3, to=1-4]
	\arrow[from=1-4, to=1-5]
	\arrow["i"', from=1-2, to=2-2]
	\arrow["q", from=1-3, to=2-3]
	\arrow[Rightarrow, no head, from=1-4, to=2-4]
	\arrow["{c_q}", from=2-3, to=3-3]
	\arrow["k"', from=2-2, to=3-2]
	\arrow["t", from=3-3, to=3-2]
	\arrow["p", dashed, from=2-3, to=3-2]
\end{tikzcd}
\end{equation} By setting $p\coloneqq tc_q\colon M\to H$, we arrive at a commutative diagram with exact rows $$\begin{tikzcd}
	{\eta_1\colon\ \ \ 0} & A & D & H & 0\,\phantom{.} \\
	{\phantom{{}_1}\eta\colon\ \ \ 0} & B & M & H & 0\,.
	\arrow[from=1-1, to=1-2]
	\arrow["i", from=1-2, to=1-3]
	\arrow["a"', from=1-2, to=2-2]
	\arrow["k", from=1-3, to=1-4]
	\arrow["m", from=1-3, to=2-3]
	\arrow[from=1-4, to=1-5]
	\arrow[equals, from=1-4, to=2-4]
	\arrow[from=2-1, to=2-2]
	\arrow["q"', from=2-2, to=2-3]
	\arrow["p"', from=2-3, to=2-4]
	\arrow[from=2-4, to=2-5]
\end{tikzcd}$$ Moreover, $\eta\in\mathcal{E}_F$ by \autoref{lema-morf-sucex}, since $q\in\mathcal{M}_F$ and $\mathcal{M}_F$ is an h.f.\,class. Now, by \autoref{prop-propiedades-pullback-pushout}, $[\eta]=[a\cdot\eta_1]$. Also, from \eqref{eq-diag-3x3} and \autoref{prop-propiedades-pullback-pushout}, $[a\cdot\eta_1]=[\eta_2\cdot g]$ and so $[\eta]=[\eta_2\cdot g]$. This means that there exists a commutative diagram with exact rows $$\begin{tikzcd}
	{\phantom{{}_2}\eta\colon\ \ \ 0} & B & M & H & 0\,\phantom{.} \\
	{\eta_2\colon\ \ \ 0} & B & E & I & 0\,.
	\arrow[from=1-1, to=1-2]
	\arrow["q", from=1-2, to=1-3]
	\arrow[from=2-1, to=2-2]
	\arrow["j"', from=2-2, to=2-3]
	\arrow["l"', from=2-3, to=2-4]
	\arrow[from=2-4, to=2-5]
	\arrow["p", from=1-3, to=1-4]
	\arrow[from=1-4, to=1-5]
	\arrow[Rightarrow, no head, from=1-2, to=2-2]
	\arrow["s"', from=1-3, to=2-3]
	\arrow["g", from=1-4, to=2-4]\arrow["\textup{\textsc{pb}}"{anchor=center}, draw=none, from=1-3, to=2-4]
\end{tikzcd}$$ Since $l,g\in\mathcal{M}_F$ and $g$ is monomorphism,  $s\in\mathcal{M}_F$ by \autoref{lema-popb-3x3}\,($b$). On the other hand, given that $ld=gk$ holds from \eqref{eq-diag-3x3}, universality of pullback provides $u\colon D\to M$ such that $k=pu$ and $d=su$. Moreover, $u$ is a monomorphism because so is $d$. If we are able to prove that $u\in\mathcal{M}_F$, given that $\mathcal{M}_F$ is an h.f.\,class and $s,u\in\mathcal{M}_F$ are monomorphisms, then $d=su\in\mathcal{M}_F$ by (E) and we are done. 

To see this, let us consider the following commutative diagram with exact rows, where $v\colon A\to B$ is the unique morphism such that $ui=qv$: $$\begin{tikzcd}
	{\eta_1\colon\ \ \ 0} & A & D & H & {0\,\phantom{.}} \\
	{\phantom{{}_1}\eta\colon\ \ \ 0} & B & M & H & {0\,.}
	\arrow[from=1-1, to=1-2]
	\arrow["i", from=1-2, to=1-3]
	\arrow[from=2-1, to=2-2]
	\arrow["q"', from=2-2, to=2-3]
	\arrow["p"', from=2-3, to=2-4]
	\arrow[from=2-4, to=2-5]
	\arrow["k", from=1-3, to=1-4]
	\arrow[from=1-4, to=1-5]
	\arrow["\exists !v"', dashed, from=1-2, to=2-2]
	\arrow["u", from=1-3, to=2-3]
	\arrow[Rightarrow, no head, from=1-4, to=2-4]
\end{tikzcd}$$ If we show that $v=a\in\mathcal{M}_F$, then an application of \autoref{lema-popb-3x3}\,($a$) will led us to conclude $u\in\mathcal{M}_F$ as desired. Indeed, from diagram \eqref{eq-diag-snake3x3} we have $k=pm$. On the other hand, we already have $k=pu$, thus $pu=pm$ implies $p(u-m)=0$ and so $u=qr+m$ for some $r\colon D\to B$, due to the fact that $q$ is a kernel of $p$. Now, from the relations $$di=sui=sqri+smi=jri+sqa=jri+ja=jri+di$$ we deduce $jri=0$ and so $ri=0$ because $j$ is a monomorphism. Hence, $ ui = qri + mi = qa$, and uniqueness of $v$ yields $v=a$.
\end{proof}

From the proof of \autoref{teo-principal} we see next that the definition of closed subfunctor is redundant. This result first appeared in \citep[Proposition 1.4] {draxler_exact_1999}.

\begin{coro}
For a proper subfunctor $F$ the following conditions are equivalent:
\begin{itemize}
\item[($a$)] $F$ is closed on the left.
\item[($b$)] $F$ is closed.
\item[($c$)] $F$ is closed on the right.
\end{itemize}
\end{coro}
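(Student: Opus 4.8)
The plan is to deduce everything from the two characterizations already available: \autoref{teo-caract-cerr-hf}, by which $F$ is closed on the right (resp.\ on the left) precisely when $\mathcal{M}_F$ satisfies \textup{(E)} (resp.\ \textup{(E$^{\ast}$)}), and \autoref{teo-principal}, by which $F$ is closed precisely when it has the $3\times 3$-lemma property. The implications ($b$)$\Rightarrow$($a$) and ($b$)$\Rightarrow$($c$) are immediate from the definitions, since a closed subfunctor is closed on both sides. Furthermore, passing to $\A^{\op}$ sends a proper subfunctor $F$ of $\ext{\A}{-}{-}$ to a proper subfunctor $F^{\op}$ of $\ext{\A^{\op}}{-}{-}$ with $\mathcal{M}_{F^{\op}}=\mathcal{M}_F^{\op}$, interchanging ``closed on the left'' and ``closed on the right''; hence, invoking \autoref{obs-dual-fclass}, it suffices to prove one implication, say ($c$)$\Rightarrow$($b$): if $F$ is closed on the right, then $F$ is closed.

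First I would translate the hypothesis via \autoref{teo-caract-cerr-hf}\,($a$): $F$ closed on the right means $\mathcal{M}_F$ satisfies \textup{(E)}. By \autoref{teo-principal} it is then enough to show that $F$ has the $3\times 3$-lemma property, and the natural route is to re-run the proof of the implication ($b$)$\Rightarrow$($c$) of \autoref{teo-principal} under the weaker standing hypothesis that $\mathcal{M}_F$ is an f.\,class satisfying only \textup{(E)} (rather than a full h.f.\,class). Inspecting that proof, given a diagram \eqref{eq-diag-3x3} with first and third rows and all columns $F$-exact, the monomorphism $d$ realizing the middle row is produced by a chase whose steps rest only on the f.\,class axioms, on \autoref{prop-propiedades-pullback-pushout}, \autoref{lema-popb-3x3}\,($a$) and \autoref{lema-morf-sucex}, on the closure of $\mathcal{E}_F$ under pushouts and pullbacks, and on property \textup{(E)} itself — with the \emph{sole} exception of one application of \autoref{lema-popb-3x3}\,($b$), used to place a certain pullback map $s$ in $\mathcal{M}_F$, and that application as stated requires \textup{(E$^{\ast}$)}. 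So the whole implication reduces to a single point: under \textup{(E)} alone, the map $s$ arising there — the middle map of a pullback of an $F$-exact sequence along an $F$-monomorphism — lies in $\mathcal{M}_F$. I would try to secure this by exhibiting $s$ inside a $3\times 3$ diagram built from the flag of subobjects it determines, in which one row is split, one equals an $F$-exact sequence already constructed, and the third is recognizably a pushout of an $F$-exact sequence, so that $s\in\mathcal{M}_F$ can be read off using only \autoref{lema-popb-3x3}\,($a$), pushout/pullback closure of $\mathcal{E}_F$, and \textup{(E)}.

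The step I expect to be the main obstacle is precisely this last one: dispensing with \textup{(E$^{\ast}$)} when locating $s$ in $\mathcal{M}_F$. The delicate part is to arrange the $3\times 3$-configuration around $s$ so that the conclusion genuinely amounts to composing two $F$-monomorphisms — an instance of \textup{(E)} — and not two $F$-epimorphisms, and to verify, with the help of \autoref{prop-propiedades-pullback-pushout} and an explicit description of the Yoneda classes involved, that the non-split row is actually a pushout of an $F$-exact sequence and not merely a sequence mapping to one. Once ($c$)$\Rightarrow$($b$) is in place, its dual yields ($a$)$\Rightarrow$($b$), and the three conditions become equivalent; in particular closedness on a single side already forces closedness, which is the asserted redundancy first recorded in \citep[Proposition 1.4]{draxler_exact_1999}.
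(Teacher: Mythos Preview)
Your proposal takes essentially the same route as the paper: reduce by duality to showing that one of (E), (E$^{\ast}$) alone yields the $3\times 3$-lemma property, re-run the argument of \autoref{teo-principal}, and isolate the single step---placing $s$ in $\mathcal{M}_F$ via \autoref{lema-popb-3x3}\,($b$)---that appealed to the other axiom. The paper disposes of that step by invoking \autoref{obs-dual-lema-popb-3x3}, whereas you sketch a direct diagram chase; both treatments leave this crux somewhat implicit, but the overall strategy and the identification of the obstacle coincide exactly.
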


\begin{proof}
By definition, ($b$) $\Rightarrow$ ($a$) and ($c$). Now if $F$ is closed on the left, then $\mathcal{M}_F$ satisfies (E) by \autoref{teo-caract-cerr-hf} and thus, as in the proof of \autoref{teo-principal}, $F$ has the $3\times 3$-lemma property (the only part where we use property (E$^{\ast}$) is for seeing that $s\in\mathcal{M}_F$, but this is a consequence of (E) as we properly mention in \autoref{obs-dual-lema-popb-3x3}). Therefore, $F$ is closed, which shows that ($a$) $\Rightarrow$ ($b$). Finally, ($c$) $\Rightarrow$ ($b$) follows by duality.
\end{proof}

We end this exposition by showing two applications of \autoref{teo-principal}. For this, we strongly use the fact that the $3\times 3$-lemma holds in abelian categories (see for instance \citep[Exercise 1.3.2]{weibel_introduction_1994}).

Let $\mathcal{X}\subseteq \A$ be a full subcategory. Then we obtain a subfunctor $F_{\mathcal{X}}$ by declaring the $F_{\mathcal{X}}$-exact sequences as those short exact sequences $$0\to A\to B\to C \to 0$$ for which the induced sequence $$0\to \hom{\A}{X}{A}\to \hom{\A}{X}{B}\to\hom{\A}{X}{C}\to 0$$ is exact in $\ab$ for every $X\in\mathcal{X}$. In a dual manner, we obtain a subfunctor $F^{\mathcal{X}}$. Both constructions gives rise to additive subfunctors \citep[Proposition 1.7]{auslander_relative1_1993}. Furthermore, they are closed \citep[Proposition 1.7]{draxler_exact_1999}.

\begin{prop}
For any full subcategory $\mathcal{X}$ of $\A$, the additive subfunctors $F_{\mathcal{X}}$ and $F^{\mathcal{X}}$ are closed.
\end{prop}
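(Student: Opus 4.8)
The plan is to reduce everything to the classical $3\times 3$-lemma in $\ab$ by means of \autoref{teo-principal}. Both $F_{\X}$ and $F^{\X}$ are additive, hence proper (every additive functor to $\ab$ is proper), so by \autoref{teo-principal} it suffices to check that each of them has the $3\times 3$-lemma property. Furthermore, $F^{\X}$ on $\A$ is just $F_{\X}$ on $\A^{\op}$ once $\X$ is viewed as a full subcategory of $\A^{\op}$, and both the $3\times 3$-lemma property and closedness are self-dual notions; alternatively one may simply repeat the argument below with the left exact functor $\hom{\A}{-}{X}$ in place of $\hom{\A}{X}{-}$. Either way, it is enough to treat $F_{\X}$.

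So I would start from a commutative diagram with exact rows and columns as in the definition of the $3\times 3$-lemma property, with rows $\eps_1\colon 0\to A\to B\to C\to 0$, $\eps_2\colon 0\to D\to E\to G\to 0$ (whose second map I call $e\colon E\to G$), $\eps_3\colon 0\to H\to I\to J\to 0$, and columns $\eta_1,\eta_2,\eta_3$, assuming $\eps_1$, $\eps_3$ and $\eta_1$, $\eta_2$, $\eta_3$ are $F_{\X}$-exact; I must show $\eps_2$ is $F_{\X}$-exact. Fix $X\in\X$ and apply the left exact functor $\hom{\A}{X}{-}$ to the whole diagram. By the definition of $F_{\X}$, the images of $\eps_1$, $\eps_3$, $\eta_1$, $\eta_2$, $\eta_3$ are short exact sequences of abelian groups, while the image of $\eps_2$ is a priori only left exact, hence in particular a complex. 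Thus we obtain a commutative $3\times 3$ diagram in the abelian category $\ab$ whose columns and whose first and third rows are short exact and whose middle row is a complex.

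By the $3\times 3$-lemma in $\ab$ (\citep[Exercise 1.3.2]{weibel_introduction_1994}), the middle row is then exact; in particular $\hom{\A}{X}{e}\colon\hom{\A}{X}{E}\to\hom{\A}{X}{G}$ is surjective, so $0\to\hom{\A}{X}{D}\to\hom{\A}{X}{E}\to\hom{\A}{X}{G}\to 0$ is exact. Since $X\in\X$ was arbitrary, $\eps_2\in\E_{F_{\X}}$, which shows that $F_{\X}$ has the $3\times 3$-lemma property. Applying \autoref{teo-principal} we conclude that $F_{\X}$ is closed, and the statement for $F^{\X}$ follows by the duality explained above.

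I expect essentially no real obstacle here — this is exactly the payoff of \autoref{teo-principal}, which converts closedness into a diagrammatic condition that is automatic for homologically defined subfunctors. The only point requiring care is the bookkeeping: checking that precisely the rows and columns assumed $F_{\X}$-exact become short exact after applying $\hom{\A}{X}{-}$, and that the remaining middle row stays a complex, so that the version of the $3\times 3$-lemma that recovers the middle row from the two outer rows is the one being invoked.
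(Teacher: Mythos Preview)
Your proposal is correct and follows essentially the same route as the paper: reduce closedness to the $3\times 3$-lemma property via \autoref{teo-principal}, then apply $\hom{\A}{X}{-}$ and invoke the $3\times 3$-lemma in $\ab$. Your extra remarks (that the middle row is at least a complex by left exactness, and the explicit duality reduction for $F^{\X}$) are sound refinements that the paper leaves implicit.
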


\begin{proof}
We only prove that $F_{\mathcal{X}}$ is closed since the proof of $F^{\mathcal{X}}$ uses dual arguments. Consider a commutative diagram 
\begin{equation}\label{eq-3x3-Fx}
\begin{tikzcd}
	& 0 & 0 & 0 \\
	0 & {A} & {B} & {C} & 0 \\
	0 & {D} & {E} & {G} & 0 \\
	0 & {H} & {I} & {J} & 0 \\
	& 0 & 0 & 0
	\arrow[from=2-1, to=2-2]
	\arrow[from=2-2, to=2-3]
	\arrow[from=2-3, to=2-4]
	\arrow[from=2-4, to=2-5]
	\arrow[from=3-1, to=3-2]
	\arrow[from=3-2, to=3-3]
	\arrow[from=3-3, to=3-4]
	\arrow[from=3-4, to=3-5]
	\arrow[from=4-1, to=4-2]
	\arrow[from=4-2, to=4-3]
	\arrow[from=4-3, to=4-4]
	\arrow[from=4-4, to=4-5]
	\arrow[from=1-2, to=2-2]
	\arrow[from=2-2, to=3-2]
	\arrow[from=3-2, to=4-2]
	\arrow[from=4-2, to=5-2]
	\arrow[from=1-3, to=2-3]
	\arrow[from=2-3, to=3-3]
	\arrow[from=3-3, to=4-3]
	\arrow[from=4-3, to=5-3]
	\arrow[from=1-4, to=2-4]
	\arrow[from=2-4, to=3-4]
	\arrow[from=3-4, to=4-4]
	\arrow[from=4-4, to=5-4]
\end{tikzcd}
\end{equation} such that the first and third row and all columns are $F_{\mathcal{X}}$-exact sequences. We shall show that the second row is also an $F_{\mathcal{X}}$-exact sequence. For every $X\in \mathcal{X}$, the functor $\hom{\A}{X}{-}$ applied to diagram \eqref{eq-3x3-Fx} yields the commutative diagram $$\begin{tikzcd}[column sep = 1.5em]
	& 0 & 0 & 0 \\
	0 & {\hom{\A}{X}{A}} & {\hom{\A}{X}{B}} & {\hom{\A}{X}{C}} & 0 \\
	0 & {\hom{\A}{X}{D}} & {\hom{\A}{X}{E}} & {\hom{\A}{X}{G}} & 0\\
	0 & {\hom{\A}{X}{H}} & {\hom{\A}{X}{I}} & {\hom{\A}{X}{J}} & 0 \\
	& 0 & 0 & 0
	\arrow[from=1-2, to=2-2]
	\arrow[from=1-3, to=2-3]
	\arrow[from=1-4, to=2-4]
	\arrow[from=2-1, to=2-2]
	\arrow[from=2-2, to=2-3]
	\arrow[from=2-2, to=3-2]
	\arrow[from=2-3, to=2-4]
	\arrow[from=2-3, to=3-3]
	\arrow[from=2-4, to=2-5]
	\arrow[from=2-4, to=3-4]
	\arrow[from=3-1, to=3-2]
	\arrow[from=3-2, to=3-3]
	\arrow[from=3-2, to=4-2]
	\arrow[from=3-3, to=3-4]
	\arrow[from=3-4, to=3-5]
	\arrow[from=3-3, to=4-3]
	\arrow[from=3-4, to=4-4]
	\arrow[from=4-1, to=4-2]
	\arrow[from=4-2, to=4-3]
	\arrow[from=4-2, to=5-2]
	\arrow[from=4-3, to=4-4]
	\arrow[from=4-3, to=5-3]
	\arrow[from=4-4, to=4-5]
	\arrow[from=4-4, to=5-4]
\end{tikzcd}$$ in which the first and third row and all columns are exact in $\ab$ by definition of $F_{\mathcal{X}}$. By the $3\times 3$-lemma, the second row of this diagram is also exact in $\ab$ and thus the second row of \eqref{eq-3x3-Fx} is an $F_{\mathcal{X}}$-exact sequence. Therefore, $F_{\mathcal{X}}$ has the $3\times 3$-lemma property and is closed by \autoref{teo-principal}.
\end{proof}

\begin{defi}
Let $F$ be an additive subfunctor. We say that:
\begin{itemize}
\item[$(a)$]  An object $P\in\A$ is \textit{a projective of $F$} if for every $F$-exact sequence $$0\to A\to B\to C\to 0\,,$$ the induced sequence of abelian groups $$0\to \hom{\A}{P}{A}\to\hom{\A}{P}{B}\to\hom{\A}{P}{C}\to 0$$ is exact. We denote by $\operatorname{Proj}(F)$ the full subcategory of $\A$ consisting of all projective objects of $F$.
\item[$(b)$] We say that $F$ \textit{has enough projectives} if for every $C\in\A$, there exists an $F$-exact sequence $$0\to A \to P\to C\to 0$$ where $P\in\operatorname{Proj}(F)$. 
\end{itemize}

Dually, we denote by $\operatorname{Inj}(F)$ the full subcategory of $\A$ consisting of the \textit{injective objects of $F$} and we define the notion of $F$ \textit{having enough injectives}.
\end{defi}

The next characterization is key for proving that subfunctors with enough projectives or injectives are closed.

\begin{prop}[{\citep[Proposition 1.5]{auslander_relative1_1993}}]\label{prop-suf-proj}
For an additive subfunctor $F$ the following conditions holds true.
\begin{itemize}
\item[$(a)$] If $F$ has enough projectives, then an exact sequence $$0\to A\to B\to C\to 0$$ is $F$-exact if, and only if, for every $P\in\operatorname{Proj}(F)$, the induced sequence of abelian groups $$0\to \hom{\A}{P}{A}\to\hom{\A}{P}{B}\to\hom{\A}{P}{C}\to 0$$ is exact.
\item[$(b)$] If $F$ has enough injectives, then an exact sequence $$0\to A\to B\to C\to 0$$ is $F$-exact if, and only if, for every $Q\in\operatorname{Inj}(F)$, the induced sequence of abelian groups $$0\to \hom{\A}{C}{Q}\to\hom{\A}{B}{Q}\to\hom{\A}{A}{Q}\to 0$$ is exact.
\end{itemize} \hfill $\qed$
\end{prop}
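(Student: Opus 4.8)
The plan is to prove $(a)$ in detail and obtain $(b)$ by a dual argument, replacing projectives by injectives of $F$, monomorphisms by epimorphisms, and pushouts by pullbacks. In $(a)$, one implication needs no work: if $0\to A\to B\to C\to 0$ is $F$-exact then, directly from the definition of $\operatorname{Proj}(F)$, the sequence $0\to\hom{\A}{P}{A}\to\hom{\A}{P}{B}\to\hom{\A}{P}{C}\to 0$ is exact for every $P\in\operatorname{Proj}(F)$. So the whole substance is the converse, and the hypothesis that $F$ has enough projectives is used only there.

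For the converse, suppose $\eps\colon 0\to A\overset{i}{\to} B\overset{p}{\to} C\to 0$ is exact and that, for every $P\in\operatorname{Proj}(F)$, the induced $\hom{\A}{P}{-}$-sequence is short exact. Since $F$ has enough projectives, I would fix an $F$-exact sequence $\eta\colon 0\to K\overset{j}{\to} P\overset{\pi}{\to} C\to 0$ with $P\in\operatorname{Proj}(F)$. Applying the hypothesis to this $P$ makes the map $\hom{\A}{P}{B}\to\hom{\A}{P}{C}$ surjective, so there is $g\colon P\to B$ with $pg=\pi$; then $pgj=\pi j=0$ forces $gj$ to factor through $i=\Ker{p}$, giving $f\colon K\to A$ with $if=gj$. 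Thus $(f,g,1_C)\colon\eta\to\eps$ is a morphism of short exact sequences, and \autoref{prop-propiedades-pullback-pushout} (together with its part $(a)$) yields $[f\cdot\eta]=[\eps\cdot 1_C]=[\eps]$. Because $\eta$ is $F$-exact and $\E_F$ is closed under pushouts --- equivalently, $F(C,f)([\eta])=[f\cdot\eta]$ by condition (SF2) --- we conclude $[\eps]=[f\cdot\eta]\in F(C,A)$, so $\eps$ is $F$-exact.

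I do not expect a real obstacle here; it is a short diagram chase inside the Yoneda--Ext formalism of Section \ref{section-2}. The one point deserving care is the bookkeeping of which sequences are known to be $F$-exact: the lift of $\pi$ comes from the \emph{hypothesis} imposed on $\eps$, not from relative projectivity of $P$, and one must observe that the object $P$ supplied by ``enough projectives'' does lie in $\operatorname{Proj}(F)$, so that the hypothesis applies to it. Everything else is already available: $\E_F$ is closed under pushouts and isomorphisms by \autoref{prop-biyec-subf-clas} and \autoref{lema-equiv-clscf-clsbaer}, while the factorization of morphisms of short exact sequences together with the induced relations is recorded in \autoref{prop-propiedades-pullback-pushout}. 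For $(b)$ one dualizes: fix an $F$-exact sequence $\xi\colon 0\to A\to Q\to L\to 0$ with $Q\in\operatorname{Inj}(F)$, use the hypothesis on $\eps$ to extend the monomorphism $A\to Q$ of $\xi$ along the monomorphism $i$ of $\eps$, obtain thereby a morphism $(1_A,g,h)\colon\eps\to\xi$, and deduce from \autoref{prop-propiedades-pullback-pushout} that $[\eps]=[\xi\cdot h]\in F(C,A)$, so that $\eps$ is again $F$-exact.
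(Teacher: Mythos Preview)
The paper does not supply its own proof of this proposition; it is stated with a citation to Auslander--Solberg and closed immediately with a \qed. Your argument is correct and is the standard one: use ``enough projectives'' to produce an $F$-exact $\eta$ ending in $C$, lift $\pi$ through $p$ using the \emph{hypothesis} on $\eps$, and then recognize $\eps$ as (Yoneda-equivalent to) the pushout $f\cdot\eta$, whence $[\eps]\in F(C,A)$ by closure of $\E_F$ under pushouts. Your flagging of the one delicate point---that the lift of $\pi$ comes from the assumed exactness of $\hom{\A}{P}{-}$ on $\eps$, not from $P\in\operatorname{Proj}(F)$---is exactly right, and the dualization for $(b)$ goes through without incident.
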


We now proceed to the second application of \autoref{teo-principal}, which corresponds to \citep[Theorem 7.1]{butler_classes_1961}.

\begin{prop}
If $F$ has enough projectives or injectives, then $F$ is closed.
\end{prop}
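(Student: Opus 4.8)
The plan is to reduce everything to \autoref{teo-principal}: it suffices to prove that $F$ has the $3\times 3$-lemma property, and then closedness is immediate. I would carry out the argument assuming $F$ has enough projectives; the case of enough injectives is completely dual, replacing the covariant functor $\hom{\A}{P}{-}$ by the contravariant functor $\hom{\A}{-}{Q}$ for $Q\in\operatorname{Inj}(F)$ and invoking part $(b)$ of \autoref{prop-suf-proj} instead of part $(a)$.

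So assume $F$ has enough projectives and consider a commutative diagram in $\A$ with exact rows and columns, say with rows $\eps_1,\eps_2,\eps_3$ and columns $\eta_1,\eta_2,\eta_3$, such that $\eps_1,\eps_3$ and $\eta_1,\eta_2,\eta_3$ are $F$-exact. We must show that the (already exact in $\A$) row $\eps_2$ is $F$-exact. Fix $P\in\operatorname{Proj}(F)$ and apply the additive functor $\hom{\A}{P}{-}$ to the whole diagram. Functoriality turns it into a commutative $3\times 3$ diagram of abelian groups; left exactness of $\hom{\A}{P}{-}$ makes every row and column left exact, and since a functor sends zero composites to zero composites, the image of $\eps_2$ is in particular a complex; and, by the very definition of a projective of $F$, the images of $\eps_1,\eps_3,\eta_1,\eta_2,\eta_3$ are \emph{short} exact sequences in $\ab$.

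Now I would apply the $3\times 3$-lemma in $\ab$ (\citep[Exercise 1.3.2]{weibel_introduction_1994}) in the form: if all three columns of a commutative $3\times 3$ diagram are short exact and the top and bottom rows are short exact, then the middle row is short exact. This gives that $\hom{\A}{P}{-}$ applied to $\eps_2$ is short exact in $\ab$. Since $P\in\operatorname{Proj}(F)$ was arbitrary, \autoref{prop-suf-proj}\,$(a)$ (this is where ``enough projectives'' enters) yields that $\eps_2$ is $F$-exact. Hence $F$ has the $3\times 3$-lemma property, and $F$ is closed by \autoref{teo-principal}.

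The argument is essentially bookkeeping, and I expect no serious obstacle; the one point to be careful about is that the version of the $3\times 3$-lemma needed here — all columns and the two outer rows short exact forcing the middle row to be short exact — is indeed available, namely the part of \citep[Exercise 1.3.2]{weibel_introduction_1994} that uses that the middle row is already a complex, which holds in our situation because $\eps_2$ is exact in $\A$ by hypothesis. In the injective case one must additionally remember that $\hom{\A}{-}{Q}$ reverses arrows, so the roles of ``rows'' and ``columns'' get transposed; the conclusion is unchanged after appealing to \autoref{prop-suf-proj}\,$(b)$.
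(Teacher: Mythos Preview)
Your proposal is correct and follows essentially the same approach as the paper: apply $\hom{\A}{P}{-}$ to the $3\times 3$ diagram, use the ordinary $3\times 3$-lemma in $\ab$, and then invoke \autoref{prop-suf-proj} together with \autoref{teo-principal}. The only minor inaccuracy is the remark that rows and columns get transposed in the injective case---applying $\hom{\A}{-}{Q}$ reverses the arrows within each row and column but does not interchange rows with columns---though this has no effect on the argument.
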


\begin{proof}
We only prove the case when $F$ has enough projectives. Consider a commutative diagram 
\begin{equation}\label{eq-3x3-suf-proj}
\begin{tikzcd}
	& 0 & 0 & 0 \\
	0 & {A} & {B} & {C} & 0 \\
	0 & {D} & {E} & {G} & 0 \\
	0 & {H} & {I} & {J} & 0 \\
	& 0 & 0 & 0
	\arrow[from=2-1, to=2-2]
	\arrow[from=2-2, to=2-3]
	\arrow[from=2-3, to=2-4]
	\arrow[from=2-4, to=2-5]
	\arrow[from=3-1, to=3-2]
	\arrow[from=3-2, to=3-3]
	\arrow[from=3-3, to=3-4]
	\arrow[from=3-4, to=3-5]
	\arrow[from=4-1, to=4-2]
	\arrow[from=4-2, to=4-3]
	\arrow[from=4-3, to=4-4]
	\arrow[from=4-4, to=4-5]
	\arrow[from=1-2, to=2-2]
	\arrow[from=2-2, to=3-2]
	\arrow[from=3-2, to=4-2]
	\arrow[from=4-2, to=5-2]
	\arrow[from=1-3, to=2-3]
	\arrow[from=2-3, to=3-3]
	\arrow[from=3-3, to=4-3]
	\arrow[from=4-3, to=5-3]
	\arrow[from=1-4, to=2-4]
	\arrow[from=2-4, to=3-4]
	\arrow[from=3-4, to=4-4]
	\arrow[from=4-4, to=5-4]
\end{tikzcd}
\end{equation} such that the first and third row and all columns are $F$-exact. We shall show that the second row is $F$-exact. According to \autoref{prop-suf-proj}, this is the case if for every $P\in \operatorname{Proj}(F)$, the sequence of abelian groups \begin{equation}\label{eq-hom(P,-)}
0 \to \hom{\A}{P}{D}\to\hom{\A}{P}{E}\to\hom{\A}{P}{G}\to 0
\end{equation} is exact.  Indeed, every $P\in\operatorname{Proj}(F)$ induces from \eqref{eq-3x3-suf-proj} a commutative diagram $$\begin{tikzcd}[column sep = 1.5em]
	& 0 & 0 & 0 \\
	0 & {\hom{\A}{P}{A}} & {\hom{\A}{P}{B}} & {\hom{\A}{P}{C}} & 0 \\
	0 & {\hom{\A}{P}{D}} & {\hom{\A}{P}{E}} & {\hom{\A}{P}{G}} & 0\\
	0 & {\hom{\A}{P}{H}} & {\hom{\A}{P}{I}} & {\hom{\A}{P}{J}} & 0 \\
	& 0 & 0 & 0
	\arrow[from=1-2, to=2-2]
	\arrow[from=1-3, to=2-3]
	\arrow[from=1-4, to=2-4]
	\arrow[from=2-1, to=2-2]
	\arrow[from=2-2, to=2-3]
	\arrow[from=2-2, to=3-2]
	\arrow[from=2-3, to=2-4]
	\arrow[from=2-3, to=3-3]
	\arrow[from=2-4, to=2-5]
	\arrow[from=2-4, to=3-4]
	\arrow[from=3-1, to=3-2]
	\arrow[from=3-2, to=3-3]
	\arrow[from=3-2, to=4-2]
	\arrow[from=3-3, to=3-4]
	\arrow[from=3-4, to=3-5]
	\arrow[from=3-3, to=4-3]
	\arrow[from=3-4, to=4-4]
	\arrow[from=4-1, to=4-2]
	\arrow[from=4-2, to=4-3]
	\arrow[from=4-2, to=5-2]
	\arrow[from=4-3, to=4-4]
	\arrow[from=4-3, to=5-3]
	\arrow[from=4-4, to=4-5]
	\arrow[from=4-4, to=5-4]
\end{tikzcd}$$ such that the first and third row and all columns are exact in $\ab$ by \autoref{prop-suf-proj} and then the $3\times 3$-lemma provides the exactness of sequence \eqref{eq-hom(P,-)}. Hence $F$ has the $3\times 3$-lemma property and is closed by \autoref{teo-principal}.
\end{proof}

\section*{Acknowledgments}
I would like to thank the anonymous referee for their meticulous reading and for providing detailed corrections and insightful suggestions, which significantly enhanced the clarity and quality of the final version.


\bibliographystyle{siam} 
\bibliography{An_elementary_proof}

\end{document}